\def\sqr#1#2{{\vcenter{\vbox{\hrule height.#2pt
              \hbox{\vrule width.#2pt height#1pt \kern#1pt \vrule width.#2pt}
              \hrule height.#2pt}}}}
\def\signed #1{{\unskip\nobreak\hfil\penalty50
              \hskip2em\hbox{}\nobreak\hfil#1
              \parfillskip=0pt \finalhyphendemerits=0 \par}}
\def\endpf{\signed {$\sqr69$}}
\def\5n{\negthinspace \negthinspace \negthinspace \negthinspace \negthinspace }
\def\4n{\negthinspace \negthinspace \negthinspace \negthinspace }
\def\3n{\negthinspace \negthinspace \negthinspace }
\def\2n{\negthinspace \negthinspace }
\def\1n{\negthinspace }
\def\dbE{\mathbb{E}}
\def\dbF{\mathbb{F}}
\def\dbH{\mathbb{H}}
\def\dbP{\mathbb{P}}
\def\dbQ{\mathbb{Q}}
\def\dbR{\mathbb{R}}
\def\sB{\mathscr{B}}
\def\sE{\mathscr{E}}
\def\sT{\mathscr{T}}
\def\={\buildrel \triangle \over =}
\def\ds{\displaystyle}
\def\ns{\noalign{\ss}}
\def\a{\alpha}
\def\b{\beta}
\def\g{\gamma}
\def\d{\delta}
\def\e{\varepsilon}
\def\z{\zeta}
\def\k{\kappa}
\def\l{\lambda}
\def\m{\mu}
\def\n{\nu}
\def\si{\sigma}
\def\t{\tau}
\def\f{\varphi}
\def\th{\theta}
\def\o{\omega}
\def\i{\infty}
\def\G{\Gamma}
\def\D{\Delta}
\def\F{\Phi}
\def\O{\Omega}
\def\cB{{\cal B}}
\def\cE{{\cal E}}
\def\cF{{\cal F}}
\def\cG{{\cal G}}
\def\cM{{\cal M}}
\def\cS{{\cal S}}
\def\cY{{\cal Y}}
\def\cl{{\cal l}}
\def\ss{\smallskip}
\def\ms{\medskip}
\def\q{\quad}
\def\qq{\qquad}
\def\hb{\hbox}
\def\limsup{\mathop{\overline{\rm lim}}}
\def\liminf{\mathop{\underline{\rm lim}}}
\def\Ra{\mathop{\Rightarrow}}
\def\lan{\langle}
\def\ran{\rangle}
\def\rf{\eqref}
\def\esssup{\mathop{\rm esssup}}
\def\essinf{\mathop{\rm essinf}}
\def\h{\widehat}
\def\wt{\widetilde}
\def\cd{\cdot}
\def\cds{\cdots}
\def\ae{\hbox{\rm a.e.}}
\def\as{\hbox{\rm a.s.}}
\def\sgn{\hbox{\rm sgn$\,$}}
\def\cl{\overline}
\def\deq{\triangleq}
\def\les{\leqslant}
\def\ges{\geqslant}
\def\({\Big (}
\def\){\Big )}
\def\[{\Big[}
\def\]{\Big]}
\def\bde{\begin{definition}\label}
\def\ede{\end{definition}}
\def\be{\begin{equation}}
\def\bel{\begin{equation}\label}
\def\ee{\end{equation}}
\def\bt{\begin{theorem}\label}
\def\et{\end{theorem}}
\def\bc{\begin{corollary}\label}
\def\ec{\end{corollary}}
\def\bl{\begin{lemma}\label}
\def\el{\end{lemma}}
\def\bp{\begin{proposition}\label}
\def\ep{\end{proposition}}
\def\bas{\begin{assumption}\label}
\def\eas{\end{assumption}}
\def\br{\begin{remark}\label}
\def\er{\end{remark}}
\def\bex{\begin{example}\label}
\def\ex{\end{example}}
\def\ba{\begin{array}}
\def\ea{\end{array}}
\def\ed{\end{document}}
\def\square#1{\vbox{\hrule\hbox{\vrule height#1%
     \kern#1\vrule}\hrule}}
\def\rectangle#1#2{\vbox{\hrule\hbox{\vrule height#1%
     \kern#2\vrule}\hrule}}
\def\T{[0,T]}
\font\tenbb=msbm10 \font\sevenbb=msbm7 \font\fivebb=msbm5
\newtheorem{theorem}{\hskip 1.3em Theorem}[section]
\newtheorem{definition}[theorem]{\hskip 1.3em Definition}
\newtheorem{proposition}[theorem]{\hskip 1.3em Proposition}
\newtheorem{corollary}[theorem]{\hskip 1.3em Corollary}
\newtheorem{lemma}[theorem]{\hskip 1.3em Lemma}
\newtheorem{remark}[theorem]{\hskip 1.3em Remark}
\newtheorem{example}[theorem]{\hskip 1.3em Example}
\begin{document}

\title{Multi-Dimensional Super-Linear Backward Stochastic Volterra Integral Equations}

\author{
Shengjun Fan\footnote{School of Mathematics, China University of Mining and Technology, Xuzhou, China. Email: {\tt shengjunfan}
{\tt @cumt.edu.cn}. This author is supported by National Natural Science Foundation of China (No. 12171471).}  ~~~ Tianxiao Wang \footnote{School of Mathematics, Sichuan University, Chengdu, China. Email: {\tt wtxiao2014@scu.edu.cn}. This author is supported
by the NSF of China under grants 11971332 and 11931011, the Science Development Project of Sichuan University
under grant 2020SCUNL201.} ~~ and ~~~Jiongmin Yong \footnote{Department of Mathematics, University of Central Florida, Orlando, USA. Email: {\tt jiongmin.yong@ucf.edu}. This author is supported in part by NSF Grant DMS-1812921.}
}

\maketitle

\begin{abstract}
In this paper, a systematic investigation is carried out for the general solvability of multi-dimensional backward stochastic Volterra integral equations (BSVIEs) with the generators being super-linear in the adjustment variable $Z$. Two major situations are discussed: (i) When the free term is bounded with the dependence of the generator on $Z$ being of ``diagonally strictly'' quadratic growth and being sub-quadratically coupled with off-diagonal components; (ii) When the free term is unbounded having exponential moments of arbitrary order with the dependence of the generator on $Z$ being diagonally no more than quadratic and being independent of off-diagonal components. Besides, for the case that the generator is super-quadratic in $Z$, some negative results are presented.

\end{abstract}

\ms

\bf Keywords. \rm  Backward stochastic Volterra integral equations, multi-dimensional systems, unbounded free terms, diagonally quadratic generator, sub-quadratic growth.\rm

\ms

\bf AMS 2020 Mathematical sciences classifications. \rm
60H20, 45D05.

\section{Introduction}

Let $(\O,\cF,\dbF,\dbP)$ be a filtered complete probability space on which a standard one-dimensional Brownian motion $W(\cd)$ is defined with $\dbF=\{\cF_t\}_{t\ges0}$ being its natural filtration augmented by all the $\dbP$-null sets. Consider the following {\it backward stochastic Volterra integral equation} (BSVIE, for short):
\bel{BSVIE1}Y(t)=\psi(t)+\int_t^Tg(t,s,Y(s),Z(t,s))ds-\int_t^TZ(t,s)dW(s),
\ \ t\in[0,T],\ee
where $g:\D[0,T]\times\dbR^m\times\dbR^m\times\O\to\dbR^m$, satisfying proper conditions, is called the {\it generator} of BSVIE \rf{BSVIE1}, with $\D^*[0,T]=\{(t,s)\in[0,T]^2\bigm|0\les t<s\les T\}$ being the upper triangle domain within the square $[0,T]^2$, and $\psi(\cd)$ is called a {\it free term}. The unknown is a pair $(Y(\cd),Z(\cd\,,\cd))$ of processes having certain properties, called an {\it adapted solution}. For convenience, we refer to $Y(\cd)$ as the {\it backward term} and refer to $Z(\cd\,,\cd)$ as the {\it adjustment term} (since such a term makes it possible to have adaptiveness of $Y(\cd)$). When $g(t,s,y,z)\equiv g(s,y,z)$ and $\psi(t)\equiv\xi$ are independent of $t$, \rf{BSVIE1} is reduced to the following {\it backward stochastic differential equation} (BSDE, for short) (of the integral form):
\bel{BSDE1}Y(t)=\xi+\int_t^Tg(s,Y(s),Z(s))ds-\int_t^TZ(s)dW(s),\ \  t\in[0,T].\ee
Linear BSDE was introduced by Bismut in 1973 (\cite{Bismut 1973}) and the general nonlinear version was introduced by Pardoux--Peng in 1990 (\cite{Pardoux-Peng 1990}) where the well-posedness was established under uniform Lipschitz condition of the map $(y,z)\mapsto g(s,y,z)$. Among a huge number of relevant papers, we mention El Karoui--Peng--Quenez \cite{El Karoui-Peng-Quenez 1997} and the books by Ma--Yong \cite{Ma-Yong 1999}, Yong--Zhou \cite{Yong-Zhou 1999}, and Zhang \cite{Zhang 2017} (and the references cite therein). In 2000, Kobylanski studied one-dimensional BSDEs having a quadratic growth of the map $z\mapsto g(s,y,z)$ (QBSDE, for short) with a bounded terminal term (\cite{Kobylanski 2000}). This work was extended by Briand--Hu \cite{Briand-Hu 2006, Briand-Hu 2008} allowing the terminal term to be unbounded (having exponential moments). Further, Delbaen--Hu--Bao (\cite{Delbaen-Hu-Bao 2011}) and Fan--Hu (\cite{Fan-Hu 2021}) respectively investigated the (one-dimensional) cases with $z\mapsto g(s,y,z)$ being super-quadratic and sub-quadratic. For multi-dimensional situations, some extensions for QBSDEs were carried out by Hu--Tang (\cite{Hu-Tang 2016}), and Fan--Hu--Tang (\cite{Fan-Hu-Tang 2020}) for either $z\mapsto g(s,y,z)$ being diagonal, or so-called {\it diagonally strictly quadratic} (see below for a precise definition).

\ms

Study of BSVIEs was begun with the work of Lin in 2002 (\cite{Lin 2002}). For general theory of BSVIEs, the readers are referred to Yong \cite{Yong 2006, Yong 2008}, Wang--Zhang \cite{Wang-Zhang 2007}, Shi--Wang \cite{Shi-Wang 2012}, Djordjevi\'c--Jankovi\'c \cite{Djordjevic-Jankovic 2015}, Hu--Oksendal \cite{Hu-Oksendal 2019}, Popier \cite{Popier 2021} for the well-posedness of BSVIEs, to Wang--Yong \cite{Wang-Yong 2015} for comparison theorems, to Shi--Wang--Yong \cite{Shi-Wang-Yong 2013} for some extensions to mean-field BSVIEs, to Shi--Wen--Xiong \cite{Shi-Wen-Xiong 2019} for doubly BSVIEs, to Hamaguchi \cite{Hamaguchi 2021} for infinite horizonal BSVIEs, to Wang \cite{Wang 2021} for extended BSVIEs, to Wang--Yong \cite{Wang-Yong 2021}, Hern\'{a}ndez \cite{Hernandez 2021}, Hern\'{a}ndez--Possamai \cite{Hernandez-Possamai 2021} for BSVIEs with $Z(s,s)$ appears, to Agram--Djehiche \cite{Agram-Djehiche-2021-SCL} for reflected BSVIEs, to Yong \cite{Yong 2017}, Wang--Yong \cite{Wang-Yong 2019}, Wang \cite{Wang 2021}, Wang--Yong--Zhang \cite{Wang-Yong-Zhang-AIHP} for connections with PDEs, to Yong \cite{Yong 2006, Yong 2008}, Wang--Zhang \cite{Wang-Zhang 2017}, Wang \cite{Wang 2018} for applications in stochastic optimal control theory, to Yong \cite{Yong 2007}, Wang--Shi \cite{Wang-Shi 2013}, Agram \cite{Agram-2019-SAA}, Beissner--Rosazza Gianin \cite{Bessner-Rosazza Ganin-2021}, Wang--Sun--Yong \cite{Wang-Sun-Yong 2021} for connections with mathematical finance, and to Wang--Yong \cite{Wang-Yong 2021}, Hamaguchi \cite{Hamaguchi 2021b} for applications to time-inconsistent stochastic optimal controls. It is well-known by now that the well-posedness can be established under the uniform Lipschitz conditions of $(y,z)\mapsto g(t,s,y,z)$ for the general multi-dimensional cases. For the case of $z\mapsto g(t,s,y,z)$ being quadratic, the one-dimensional well-posedness was established by Wang--Sun--Yong \cite{Wang-Sun-Yong 2021} with bounded free terms, and the multi-dimensional case was studied in Hern\'{a}ndez \cite{Hernandez 2021} with essentially small bounded free terms. To our best knowledge, it is completely blank for the general multi-dimensional super-linear BSVIEs (including sub-quadratic, strictly quadratic, and super-quadratic growth of $z\mapsto g(t,s,y,z)$) with bounded/unbounded free terms. From mathematical point of view, it will be good to fill these blanks for BSVIEs, and to explore further, even for BSDEs. This is a major motivation of the current paper. Our main results can be briefly summarized as follows. Let us emphasis that all results are for multi-dimensional BSVIEs. Also, although we only consider the case of one-dimensional Brwonian motion, all the results hold for multi-dimensional Brownian motions, just with some modification of notations.

\ms

(i) For the case $z\mapsto g(t,s,y,z)$ being diagonally strictly quadratic, well-posedness of BSVIEs will be established under the condition that the free term is bounded, allowing off-diagonal components of $z$ appear sub-quadratically (see a later section for the precise meaning). Moreover, our results allow the generator to have a mixture of quadratic and linear growth components. Note that in the literature, the results for QBSDEs do not cover those for linear growth BSDEs (\cite{Hu-Tang 2016,Fan-Hu-Tang 2020}). Hence, this part of the results are new even for BSDEs.

\ms

(ii) For the case $z\mapsto g(t,s,y,z)$ being super-linear but sub-quadratic, we will establish the well-posedness with more general free terms than the quadratic BSVIEs. Some componentwise extended convexity/concavity of $z\mapsto g(t,s,y,z)$ will be assumed. This part of results are also new for BSDEs because only one-dimensional sub-quadratic BSDEs were studied in the literature (\cite{Fan-Hu 2021}).

\ms

(iii) For the case $z\mapsto g(t,s,y,z)$ being super-quadratic,
we will establish some negative results on the well-posedness, either no existence, or no uniqueness. In some sense, these results are extensions of similar ones for BSDEs (\cite{Delbaen-Hu-Bao 2011}).

\ms

Now, let us present another important motivation of the current paper.

\ms

A given random variable $\xi\in L^1_{\cF_T}(\O;\dbR)$ can represent the return of some asset position at some future time $T$. A standard {\it dynamic entropic risk measure} of $\xi$ with a {\it risk aversion parameter} $\g>0$ is defined by
\bel{rho^1}\rho^1_t(\xi)={1\over\g}\log\dbE_t\big[e^{-\g\xi}\big],\ \  t\in[0,T],\ee
where $\dbE_t[\,\cd\,]=\dbE[\,\cd\,|\,\cF_t]$ (\cite{El Karoui-Peng-Quenez 1997}). It is known (\cite{Follmer-Penner 2006,Follmer-Knispel 2011}) that the above is a convex risk measure (not coherent), on the set
$$\sE_\g^1=\Big\{\xi\in L^1_{\cF_T}(\O;\dbR)\bigm|\sup_{t\in[0,T]}
\dbE_t\big[e^{-\g\xi}\big]<\infty\Big\}.$$
Clearly, $L^\infty_{\cF_T}(\O;\dbR)$ is a proper subset of $\sE_\g^1$, as the latter also contains some unbounded random variables, besides bounded ones. Let us consider the following BSDE (of integral form)
\bel{BSDE2}Y(t)=-\xi+\int_t^T{\g\over2}|Z(s)|^2ds-\int_t^TZ(s)dW(s),\ \  t\in[0,T].\ee
Suppose $(Y(\cd),Z(\cd))$ is the adapted solution of this BSDE.
%
%
%
%
%
%
%
%
%
Then, a straightforward calculation shows that
$$Y(t)={1\over\g}\log\dbE_t\big[e^{-\g\xi}\big]=\rho_t^1(\xi).$$
This means that the backward term $Y(\cd)$ of the adapted solution $(Y(\cd),Z(\cd))$ to the BSDE \rf{BSDE2} can be exactly taken as a dynamic entropic risk measure for $\xi$, as long as $\xi\in\sE^1_\g$.

\ms

Next, we want to extend the above dynamic entropic risk measure to a larger set than $\sE_\g^1$. To this end, for any $\a>0$, we define
$$\xi^{\{\a\}}=(\xi^+)^\a-(\xi^-)^\a\equiv\left\{\ba{ll}
\ss\ds\xi^\a,\qq\xi\ges0,\\
\ss\ds-|\xi|^\a,\ \ \xi<0.\ea\right.$$
This is the odd extension of $\xi^\a$, $\xi\ges0$ to $\dbR$, symmetric with respect to the origin, which is well-defined on $\dbR$ and is strictly increasing. Now, for $\a,\g>0$, we introduce
$$\sE_\g^\a=\Big\{\xi\in L^1_{\cF_T}(\O;\dbR)\bigm|\sup_{t\in[0,T]}
\dbE_t\big[e^{-\g\xi^{\{\a\}}}\big]<\infty\Big\}.$$
Since
$$\dbE_t\big[e^{-\g\xi^{\{\a\}}}\big]
=\dbE_t\big[e^{-\g\xi^\a}I_{(\xi\ges0)}\big]+\dbE_t
\big[e^{\g|\xi|^\a}I_{(\xi<0)}\big],\q 0\les\dbE_t\big[e^{-\g\xi^\a}I_{(\xi\ges0)}\big]\les1,
$$
we see that
$$\sE^\a_\g=\Big\{\xi\in L^1_{\cF_T}(\O;\dbR)\bigm|\sup_{t\in[0,T]}\dbE_t\big[e^{\g|\xi|^\a}
I_{(\xi<0)}\big]<\infty\Big\}.$$
Clearly, both $\a\mapsto\sE^\a_\g$ and $\g\mapsto\sE^\a_\g$ are (strictly) decreasing, in particular,
$$\sE^1_\g\varsubsetneq\sE^\a_\g,\ \ \a\in(0,1).$$
Now, for the return $\xi\in\sE^\a_\g\setminus\sE^1_\g$, inspired by \rf{rho^1}, we may try to define an extended dynamic entropic risk measure of such a $\xi$ by the following:
\bel{rho-a}\rho_t^\a(\xi)=\({1\over\g}\log\dbE_t\big[e^{-\g
\xi^{\{\a\}}}\big]\)^{\{{1\over\a}\}},\ \  t\in[0,T].\ee
However, the above only satisfies the monotonicity, and it does not even have the translation invariance. Hence, this is not a good dynamic risk measure. To have a reasonable entropic type dynamic risk measure for $\xi\in\sE_\g^\a$, suggested by the case of $\sE_\g^1$, we consider the following BSDE:
\bel{BSDE2*}Y(t)=-\xi+\int_t^T{\g\over \k}|Z(s)|^{2\over2-\a}ds+\int_t^TZ(s)
dW(s),\ \  t\in[0,T],\ee
for some $\k>1$, whose generator has a sub-quadratic growth in $Z$ (since ${2\over2-\a}\in(1,2)$). According to Fan--Hu \cite{Fan-Hu 2021}, for any $\xi\in\sE^\a_\g$, the above BSDE admits a unique adapted solution $(Y(\cd),Z(\cd))$ satisfying
\bel{estimate} e^{\frac{\g}{\k}|Y(t)|^\a}I_{(Y(t)>0)}\les K\dbE_t\Big[e^{\g|\xi|^\a}I_{(\xi<0)}\Big]\ee
for some $K>1$. In the current case, the map $\xi\mapsto Y(t)\equiv Y(t;\xi)$ satisfies the monotonicity, translation invariance, and convex. Hence, we may take such a map as an extended entropic dynamic risk measure on the set $\sE_\g^\a$.
This leads to the study of BSDEs with the generators being sub-quadratic in $Z$.
\ms

Next, according to Yong \cite{Yong 2007}, Wang--Sun--Yong \cite{Wang-Sun-Yong 2021}, in the one-dimensional case, if instead of the return $\xi$ at the terminal time $T$, one has an $\cF_T$-measurable position process $\psi(\cd)$ (not necessarily $\dbF$-adapted), to dynamically measure the risk, it will be more proper to consider the following BSVIE:
\bel{BSVIE00}Y(t)=-\psi(t)+\int_t^Tg(t,s,Z(t,s))ds-\int_t^TZ(t,s)dW(s),
\ \  t\in[0,T],\ee
for a proper generator $g$, where $z\mapsto g(t,s,z)$ could be of linear, super-linear (which includes sub-quadratic, quadratic, or even super-quadratic). If $(Y(\cd),Z(\cd\,,\cd))$ is an adapted solution of BSVIE \rf{BSVIE00}, we can call $Y(\cd)$ an {\it equilibrium dynamical (entropic) risk measure} of $\psi(\cd)$. In the case that $\psi(\cd)$ is bounded, and $z\mapsto g(t,s,z)$ is quadratic, (the one-dimensional BSVIE) \rf{BSVIE00} admits a unique adapted solution $(Y(\cd),Z(\cd\,,\cd))$ so that we have an {\it equilibrium entropic dynamical risk measure} for $\psi(\cd)$ (\cite{Wang-Sun-Yong 2021}).

\ms

It is common that there are more than one position processes that have to be measured their dynamic risks individually. For example, for a (commercial and/or investment) bank, there might be some loan positions which have quite different features. They are closely related (coupled) and the dynamical risks should be measured separately. Therefore, we naturally have the vector-valued (equilibrium) dynamical risk measure process which satisfies a multi-dimensional BSDEs or BSVIEs. This consideration gives us a motivation of studying multi-dimensional BSVIEs with the generator $g$ having various types of growth in $z$. From this, people might see the application potential contained in the main results of the current paper.

\ms

The rest of the paper is organized as follows. Some preliminary is presented in Section 2, including the revisit of relevant BSDE results and some crucial lemmas to be used later. Section 3 is devoted to the case of bounded free terms for diagonally strictly quadratic case and the mixed linear and quadratic case. In Section 4, we discuss BSVIEs with unbounded free for the generators having no more than quadratic growth. Some negative results are presented in Section 5 when the generator is super-quadratic in $Z$. Finally, some concluding remarks are collected in Section 6.

\section{Preliminaries}

\subsection{Some spaces}

Let us first introduce some basic spaces. For each $t\in [0,T]$ and any Euclidean space $\dbH$ with the norm $|\cd|$ which could be $\dbR^n,\dbR^{n\times d}$, etc., we define the following basic space:
$$L^p_{\cF_t}(\O;\dbH)=\Big\{\xi:\O\to \dbH  \bigm|\xi\hb{ is $\cF_t$-measurable, }\|\xi\|_p\equiv\big(\dbE|\xi|^p\big)^{1\over p}<\infty\Big\},\q p\in[1,\infty).$$
In an obvious way, we can define $L^\infty_{\cF_t}(\O;\dbH)$. Clearly, $L^p_{\cF_t}(\O;\dbH)$ is a Banach space under the norm $\|\,\cd\,\|_p$, for all $p\in[1,\infty]$. When the range space $\dbH$ is clear from the context and is not necessarily to be emphasized, we will omit $\dbH$. In particular, we will denote $L^p_{\cF_T}(\O)=L^p_{\cF_T}(\O;\dbH)$.

\ms

Next, we introduce spaces of stochastic processes. In order to avoid repetition, all processes $(t,\o)\mapsto\f(t,\o)$ are assumed to be at least $\cB[0,T]\otimes\cF_T$-measurable without further mentioning, where $\cB[0,T]$ is the Borel $\si$-field of $[0,T]$. For $p,q\in[1,\infty)$,
$$\ba{ll}
\ds L^p_{\cF_T}(\O;L^q(0,T;\dbH))\1n=\1n\Big\{\f:[0,T]\1n\times\1n\O\to\dbH\bigm|
\dbE\(\int_0^T\3n|\f(t)|^qdt\)^{p\over q}\3n<\infty\Big\},\\
\ns\ds L^p_{\cF_T}(\O;L^\infty(0,\1n T;\dbH))\1n=\1n\Big\{\f:[0,T]\1n\times\1n\O\to\dbH\bigm|
\dbE\(\esssup_{t\in[0,T]}|\f(t)|^p\)\1n<\1n\infty\Big\}\\
\ns\ds L^p_{\cF_T}(\O;C([0,\1n T];\dbH))\1n=\1n\Big\{\f:[0,T]\1n\times\1n\O\to\dbH\bigm|
t\mapsto\f(t,\o)\hb{ is continuous, }\dbE\(\sup_{t\in[0,T]}|\f(t)|^p\)\1n<\1n\infty\Big\},\\
\ds L^q_{\cF_T}(0,T;L^p(\O;\dbH))\1n=\1n\Big\{\f:[0,T]\1n\times\1n\O\to\dbH
\bigm|
\int_0^T\(\dbE|\f(t)|^p\)^{q\over p}dt<\infty\Big\},\\
\ns\ds L^\infty_{\cF_T}(0,T;L^p(\O;\dbH))\1n=\1n\Big\{\f:[0,T]\1n\times\1n\O\to
\dbH\bigm|
\esssup_{t\in[0,T]}\(\dbE|\f(t)|^p\)^{1\over p}
<\1n\infty\Big\},\\
\ds C_{\cF_T}([0,T];L^p(\O;\dbH))\1n=\1n\Big\{\f:[0,T]\1n\times\1n\O\to\dbH
\bigm|t\mapsto\f(t, \omega )\hb{ is continuous, }\sup_{t\in[0,T]}\(\dbE|\f(t)|^p\)^{1\over p}<\1n\infty\Big\}.\ea$$
The spaces $L^\infty_{\cF_T}(\O;L^\infty(0,T;\dbH))$, $L^\infty_{\cF_T}(0,T;L^\infty(\O;\dbH))$, $L^\infty_{\cF_T}(\O;C([0,T];\dbH))$, and  $C_{\cF_T}([0,T];$ $L^\infty(\O;\dbH))$ can be defined in obvious ways. For all $p\in[1,\infty)$ , we denote
\bel{Lp}L^p_{\cF_T}(0,T;\dbH)\equiv L^p_{\cF_T}(0,T;L^p(\O;\dbH))=L^p_{\cF_T}(\O;L^p(0,T;\dbH))
=L^p((0,T)\times\O;\dbH).\ee
We point out that when $p=\infty$, the last two equalities in the above might not be true without certain type of measurability (see Example 5.0.10 in \cite{Fattorini 1999} and Example 1.42 in \cite{Roubicek 2005}). Therefore, we do not mention the above with $p=\infty$ to avoid some technicalities. Note that processes in the above spaces are not necessarily $\dbF$-adapted. The subset of, say, $L^p_{\cF_T}(\O;L^q(0,T;\dbH))$, consisting of all $\dbF$-progressively measurable processes, is denoted by $L^p_\dbF(\O;$ $L^q(0,T;\dbH))$ (having subscript $\dbF$ instead of $\cF_T$). All the $\dbF$-progressively measurable version of the other spaces listed in the above can be denoted similarly; for example, $C_\dbF([0,T];L^p(\O;\dbH))$, and so on. Finally, in the above, $[0,T]$ can be replaced by any $[t,T]$, and again, when the range space $\dbH$ is clear from the context, we will omit $\dbH$; for example, $L^p_\dbF(\O;C[t,T])$, and so on.

\ms

For each $t\in[0,T]$, $\m,\th>0$, $q\ges1$, we also introduce the following spaces of processes:
$$\ba{ll}
\ss\ds\cE_{\cF_t}^{\m,\th}(\O;\dbR^n)=\Big\{\xi\in L^1_{\cF_t}(\O;\dbR^n)\bigm|\dbE\big[\exp\big(\m|\xi|^\th\big)\big]<\infty\Big\},\\
\ss\ds\cE^{\m,\th}_{\cF_T}(\O;L^q(t,T;\dbR^n))=\Big\{\f(\cd)\in L^{ q }_{\cF_T}(t,T;\dbR^n)\bigm| \f(\cd)\hb{ is $\cB[t,T]\otimes\cF_T$-measurable},\\
\ss\ds\qq\qq\qq\qq\qq\qq\qq\qq\qq\qq\qq\(\int_t^T|\f(s)|^qds\)^{1\over q}\in\cE^{\m,\th}_{\cF_T}(\O;\dbR)\Big\},\\
\ss\ds\cE_\dbF^{\m,\th}(\O;L^q(t,T;\dbR^n))=\Big\{\f(\cd)\in\cE^{\m,\th}_{\cF_T}(\O;
L^q(t,T;\dbR^n))
\bigm|\f(\cd)\hb{ is $\dbF$-progressively measurable}\Big\}.\ea$$
The spaces $\cE_{\cF_T}^{\m,\th}(\O;L^\i(t,T;\dbR^n))$ and $\cE_\dbF^{\m,\th}(\O;L^\i(t,T;\dbR^n))$ can be defined in an obvious way. Further,
$$\ba{ll}
\ss\ds\cE^{\m,\th}_{\cF_T}(\O;C([t,T];\dbR^n))=\Big\{\f(\cd)\in\cE^{\m,\th}_{\cF_T}
(\O;L^\infty(t,T;\dbR^n))\bigm|
\f(\cd)\hb{ is continuous on $[t,T]$}\Big\},\\
\ss\ds\cE_\dbF^{\m,\th}(\O;C([t,T];\dbR^n))=\Big\{\f(\cd)\in\cE^{\m,\th}_{\cF_T}(\O;
C([t,T];\dbR^n))
\bigm|\f(\cd)\hb{ is $\dbF$-progressively measurable}\Big\}.\ea$$
Note that for each $t\in [0,T]$,
\bel{inclusion}L^\infty_{\cF_t}(\O;\dbR^n)\varsubsetneq\cE^{\m,\th}_{\cF_t}(\O;\dbR^n)\varsubsetneq\bigcap_{p\ges1}
L^p_{\cF_t}(\O;\dbR^n),\q \forall\m,\th>0,\ee
and $\cE^{\m,\th}_{\cF_t}(\O;\dbR^n)$ is decreasing in $\m$ and $\th$, respectively. Similar inclusions and monotonicity also hold for the other spaces defined above with parameters $\m$ and $\th$.

\ms

Next, for each $t\in[0,T)$, denote
$$\D^*[t,T]=\big\{(r,s)\in[t,T]^2\bigm|0\les t\les r<s\les T\big\}.$$
For $p,q\in [1,+\infty)$, we introduce the following space for $Z(\cd\,,\cd)$:
$$\ba{ll}
\ss\ds L^p_\dbF(\O;L^{q}(\D^*[t,T];\dbR^n))=\Big\{\z:\D^*[t,T]\times\O\to\dbR^n\bigm|\forall r\in[t,T],\, \z(r,\cd)\in L^p_\dbF(\O;L^{ q}(r,T;\dbR^n)),\\
\ss\ds\qq\qq\qq\qq\qq\qq\qq\qq\qq\qq\qq\qq\dbE\int_t^T\(\int_r^T|\z(r,s)|^{  q}ds\)^{p\over q}dr<\infty\Big\}.\ea$$
Also, we let
$$\ba{ll}
\ss\ds L^\infty(0,T;L^p_\dbF(\O;L^q(\cd\,,T;\dbR^n)))=\Big\{\f:\D^*[0,T]\times\O\to \dbR^n
\bigm| \forall t\in [0,T],\, \f(t,\cd)\in L^p_\dbF(\O;L^q(t,T;\dbR^n)), \\
\ss\ds\qq\qq\qq\qq\qq\qq\qq\qq\qq\qq\qq\qq\qq\esssup_{t\in[0,T]}\dbE\(\int_t^T|\f(t,s)|^qds\)^{p\over q}<\infty\Big\}.\ea$$
The case that $p$ and/or $q$ is equal to $\infty$ can be defined in an obvious way. Further, for each $t\in[0,T]$, denote by $\sT[t,T]$ the set of all $\dbF$-stopping times $\t$ valued in $[t,T]$. A uniformly integrable $\dbF$-martingale $M=\{M(t): 0\les t\les T\}$ with $M_0=0$ is called a {\it BMO martingale} on $[0,T]$ if
$$\|M(\cdot)\|^2_{{\rm BMO}[0,T]}\deq\sup_{\t\in\sT[0,T]} \Big\|\dbE_\t\big[|M(T)-M(\t)|^2\big]\Big\|_{\infty}
<\infty.$$
It is known (see \cite{Kazamaki 1994}) that for any given $K>0$, there are constants $c_1,c_2>0$ depending only on $K$ such that for any BMO martingale $N(\cd)$ with
$$\big\|N(\cd)\big\|_{{\rm BMO}_{\dbP}[0,T]}\les K,$$
by setting
$$\ba{ll}
\ns\ds d\wt\dbP :=\exp\Big\{\int_0^T N(t)dW(t)-\frac 1 2 \int_0^T |N(t)|^2dt\Big\}d\dbP,\\
\ns\ds\wt M(\cd):=M(\cd)-\lan M,N\ran(\cd),\ea$$
one has that $\wt M(\cd)$ is a BMO martingale under $\wt\dbP$, and
\bel{Equivalent-norms-BMO}c_1\big\|M(\cd)\big\|_{{\rm BMO}_\dbP[0,T]}\les\big\|\wt M(\cd)\big\|_{{\rm BMO}_{\wt\dbP}[0,T]}\les
c_2\big\|M(\cd)\big\|_{{\rm BMO}_{\dbP}[0,T]}.\ee
This means that the BMO norm of the BMO martingale $M(\cd)$ under $\dbP$ is equivalent to that of BMO martingale $\wt M(\cd)$ under $\wt\dbP$.  We now introduce
$$\ba{ll}
\ss\ds\overline{{\rm BMO}}([t,T];\dbR^n)\deq\Big\{\f:[t,T]\times\O\to\dbR^n\bigm| \f(\cd)\hb{ is $\dbF$--progressively measurable and}\\
\ss\ds\qq\qq\qq\qq\qq\qq\qq\qq\qq\big\|\f(\cd)\big\|^2_{\overline{{\rm BMO}}[t,T]}=\sup_{\t\in\sT[t,T]}\Big\|\dbE_\t\Big[\int_\t^T |\f(s)|^2ds\Big]\Big\|_\infty<\infty\Big\},\ea$$
and
$$\ba{ll}
\ss\ds\overline{{\rm BMO}}(\D^*[0,T];\dbR^n)\deq\Big\{\f:\D^*[0,T]\times\O\to\dbR^n\bigm| \f(t,\cd)\hb{ is $\dbF$--progressively measurable}\\
\ss\ds\qq\qq\qq\qq\qq\qq\qq\qq\qq\qq\qq\hb{on $[t,T]$, $\as~t\in[0,T]$, and}\\
\ss\ds\qq\qq\qq\qq\qq\qq\qq\qq\qq\qq\qq\big\|\f(\cd\,,\cd)\big\|^2_{\overline{{\rm BMO}}(\D^*[0,T])}\2n=\esssup_{t\in[0,T]}\big\|\varphi(t,\cd)\big\|^2_{\overline{{\rm BMO}}[t,T]}\1n<\1n\infty\Big\}.\ea$$
Clearly,
\bel{BMO1}\cl{\rm BMO}([t,T];\dbR^n)\supseteq L^\infty_\dbF(\O;L^2(t,T;\dbR^n)).\ee
On the other hand, for good enough process $\f(\cd\,,\cd)$, we have
\bel{}\ba{ll}
\ns\ds\|\f\|_{L^\i(0,T;L^2_\dbF(\O;L^2(\cd\,,T;\dbR^n)))}^2=\esssup_{t\in[0,T]}
\dbE\int_t^T|\f(t,s)|^2ds=\esssup_{t\in[0,T]}
\dbE\[\dbE_t\(\int_t^T|\f(t,s)|^2ds\)\]\\
\ns\ds\les\esssup_{t\in[0,T]}\Big\|\dbE_t\[\int_t^T|\f(t,s)|^2ds\]\Big\|_\infty
\les\esssup_{t\in[0,T]}\sup_{\t\in\sT[t,T]}\Big\|\dbE_\t\[\int_\t^T|\f(t,s)|^2ds\]
\Big\|_\infty\\
\ns\ds=\big\|\f(\cd\,,\cd)\big\|^2_{\overline{{\rm BMO}}(\D^*[0,T])}.\ea\ee
Hence,
\bel{BMO2}\cl{\rm BMO}(\D^*[0,T];\dbR^n)\subseteq L^\infty(0,T;L^2_\dbF(\O; L^2(\cd\,,T;\dbR^n))).\ee

At the moment, it is not clear if the equalities in \rf{BMO1} and \rf{BMO2} hold. They will again be related to the question we faced for \rf{Lp} with $p=\infty$. Fortunately it is irrelevant whether there are such equalities.

\ms

In what follows, for simplicity of the presentation, we will misuse the same notation for different range spaces which can be easily identified from the contexts.

\ms

\bde{solution} \rm A pair $(Y(\cd),Z(\cd\,,\cd))\in L^1_\dbF(0,T;\dbR^n)\times L^{ 2 }_\dbF(\O;L^2(\D^*[0,T];\dbR^n))$ is called an {\it adapted solution} of BSVIE \rf{BSVIE1} if the equation is satisfied in the usual It\^o's sense.

\ede

Note that although in the above definition, we only require $(Y(\cd),Z(\cd\,,\cd))$ to be in a very large space, in our main results, under proper conditions, our adapted solutions will have much better regularity/integrability.

\ms

\subsection{Results for BSDEs revisited}

In this subsection, we recall some relevant results for BSDEs.
Consider the following BSDE of integral form (which could be one-dimensional or multi-dimensional):
\bel{BSDE1}Y(t)=\xi+\int_t^Tf(s,Y(s),Z(s))ds-\int_t^TZ(s)dW(s),\ \  t\in[0,T].\ee
We look at the following classes of problems.

\ms

(i) Quadratic BSDEs with bounded terminal terms.

\ms

The main feature of such a BSDE is that $z\mapsto f(t,y,z)$ has a quadratic growth. Because of this, such an equation is called a {\it quadratic} BSDE (QBSDE, for short). We introduce the following hypothesis.

\ms

{\bf(H2.1)} The map $f:[0,T]\times\dbR^n\times\dbR^n\times\O\to\dbR^n$ is measurable such that $t\mapsto f(t,y,z)$ is $\dbF$-progressively measurable, for all $(y,z)\in\dbR^n\times\dbR^n$. There exists a non-negative valued process $\a(\cd)\in L^\infty_\dbF(\O;L^1(0,T;\dbR))$, a continuous, monotone increasing function $\phi:[0,\infty)\to[0,\infty)$ with $\phi(0)=0$, and constants $\b,\l\ges0$, $\g\ges\bar\g>0$, $\d\in(0,1)$ such that
\bel{H2.1a}\ba{ll}
\ss\ds\sgn(y^i)f^i(t,y,z)\les\a(t)\1n+\1n\b|y|\1n+\1n\l\sum_{j\ne i}|z^j|^{1+\d}\1n+\1n{\g\over2}|z^i|^2,\\
\ss\ds\qq\qq\qq\qq\qq\qq\qq\forall(t,y,z)\in[0,T]\times\dbR^n\times\dbR^n,\q1\les i\les n,\ea\ee
and either
\bel{H2.1b}\ba{ll}
\ss\ds-\a(t)\1n-\1n\b|y|\1n-\1n\l\sum_{j\ne i}|z^j|^{1+\d}\1n+\1n{\bar \g\over2}|z^i|^2\les f^i(t,y,z),\\
\ss\ds\qq\qq\qq\qq\qq\qq\qq\forall(t,y,z)\in[0,T]\times\dbR^n\times\dbR^n,\q1\les i\les n,\ea\ee
or
\bel{H2.1b*}\ba{ll}
\ss\ds\a(t)\1n+\1n\b|y|\1n+\1n\l\sum_{j\ne i}|z^j|^{1+\d}\1n-\1n{\bar \g\over2}|z^i|^2\ges f^i(t,y,z),\\
\ss\ds\qq\qq\qq\qq\qq\qq\qq\forall(t,y,z)\in[0,T]\times\dbR^n\times\dbR^n,\q1\les i\les n.\ea\ee
Further, it holds that
\bel{g-g}\ba{ll}
\ss\ds|f^i(t,y,z)-f^i(t,\bar y,\bar z)|\\
\ss\ds\les\phi(|y|\vee|\bar y|)
\Big[\big(1+|z|+|\bar z|\big)\big(|y-\bar y|+|z^i-\bar z^i|\big)+\big(1+|z|^\d+|\bar z|^\d\big)\sum_{j\neq i}|z^j-\bar z^j|\Big],\\
\ss\ds\qq\qq\qq\qq\qq\qq\qq\forall t\in[0,T],~y,\bar y,z,\bar z\in\dbR^n,~i=1,2,\cds,n.\ea\ee

Note that in the whole paper, we use the notation $\sgn(x):=I_{(x>0)}-I_{(x\les 0)}$. Condition \rf{H2.1b}--\rf{H2.1b*} are referred to as the {\it diagonally strictly quadratic} condition for the generator $f$ in $z$. This condition excludes the case that $z^i\mapsto f^i(t,y,z)$ is of linear growth. The following is Theorem 2.5 of Fan--Hu--Tang \cite{Fan-Hu-Tang 2020} which gives the existence and uniqueness of adapted solutions to QBSDEs with bounded terminal terms.

\bp{Prop2.1} \sl Let {\rm(H2.1)} hold. Then for any $\xi\in L^\infty_{\cF_T}(\O;\dbR^n)$, BSDE \rf{BSDE1} admits a unique adapted solution $(Y(\cd),Z(\cd))\in L^\infty_\dbF(\O;C([0,T];\dbR^n))\times\cl{\rm BMO}([0,T];\dbR^n)$.

\ep

It is very natural to ask what happens if the terminal state is unbounded for QBSDEs. This leads to the second situation as follows.

\ms

(ii) QBSDEs with unbounded terminal terms.

\ms

For such a situation, to get a positive result, the following hypothesis has been assumed in the literature.

\ms

{\bf(H2.2)} Let the measurability condition in (H2.1) hold for $f:[0,T]\times\dbR^n\times\dbR^n\times\O\to\dbR^n$. For each $i=1,2,\cds,n$, $f^i$ only depends on $(t,y,z^i,  \omega )$ (independent of $z^j$, $j\ne i$) and $z^i\mapsto f^i(t,y,z^i)$ is convex for all $(t,y)\in [0,T]\times \dbR^n$. There exists some $\a(\cd)\in\cap_{\m\ges1}\cE^{\m,1}_\dbF(\O;L^1(0,T;\dbR))$
and constants $\b,\g>0$, such that
\bel{f^i}|f^i(t,y,z^i)|\les\a(t)+\b|y|+{\g\over2}|z^i|^2,\q\forall(t,y,z^i)\in[0,T]\times\dbR^n\times\dbR\ee
and
\bel{23}|f(t,y,z)-f(t,\bar y,z)|\les\b|y-\bar y|,\q\forall t\in[0,T],~y,\bar y,z\in\dbR^n.\vspace{0.2cm}
\ee

The following is Theorem 2.9 of Fan--Hu--Tang \cite{Fan-Hu-Tang 2020}.

\bp{Prop2.2} \sl Let {\rm(H2.2)} hold. Then, for any $\xi\in\bigcap_{\m\ges1}\cE_{\cF_T}^{\m,1}(\O;\dbR^n)$, BSDE \rf{BSDE1} admits a unique adapted solution
$$(Y(\cd),Z(\cd))\in\Big[\bigcap_{\m\ges1}\cE^{
\m,1}_\dbF(\O;C(0,T];\dbR^n))\Big]\times\Big[\bigcap_{p\ges1} L^p_\dbF(\O;L^2(0,T;\dbR^n))\Big].$$

\ep

By \rf{inclusion}, we see that the terminal term in the above could be unbounded. However, the conditions assumed for the generator $f$ in (H2.2) is much stronger than that assumed in (H2.1), especially $f^i$ has to be independent of $z^j$, $j\ne i$, although the condition on $\a(\cd)$ in (H2.2) is a little weaker. Also, due to the assumption that $z\mapsto f(t,y,z)$ is diagonally dependent, the diagonally strictly quadratic growth condition (which is used to take care the sub-quadratic growth of off-diagonal component) is not necessary.

\ms

Next, if $z\mapsto f(t,y,z)$ is growing superlinearly but subquadratically, and diagonally dependent, then Proposition \ref{Prop2.2} should still be true. It is expected that we might be able to relax the condition on the terminal term. This will lead to the third situation.

\ms

(iii) Sub-quadratic BSDEs with unbounded terminal term.

\ms

For such a situation, there were only some results for one-dimensional BSDEs, under the following hypothesis:

\ms

{\bf(H2.3)} Let the measurability condition in (H2.1) hold for $f:[0,T]\times\dbR\times\dbR\times\O\to\dbR$. There exist some constants $\d\in(0,1)$, $\b\ges0$, $\g>0$, and $\a(\cd)\in\bigcap_{\m\ges1}\cE^{\m,{2\d\over1+\d}}_\dbF(\O;L^1(0,T;\dbR))$ such that
\bel{|f|}|f(t,y,z)|\les\a(t)+\b|y|+\g|z|^{1+\d},\q\forall(t,y,z)\in[0,T]\times\dbR\times\dbR.\ee
Moreover, $(y,z)\mapsto f(t,y,z)$ is convex for all $t\in[0,T]$.

\ms

The following is from Theorem 3.9 in Fan-Hu \cite{Fan-Hu 2021}.

\bp{Prop2.3} \sl Let {\rm(H2.3)} hold. Then for any $\xi\in\bigcap_{\m\ges1}\cE_{\cF_T}^{\m,{2\d\over1+\d}}(\O;\dbR)$, BSDE \rf{BSDE1} admits a unique adapted solution
$$(Y(\cd),Z(\cd))\in\Big[\bigcap_{\m\ges1}\cE^{\m,{2\d\over1+\d}}_\dbF(\O;C([0,T];\dbR))
\Big]\times  \Big[\bigcap_{p\ges1} L^p_\dbF(\O;L^2(0,T;\dbR))\Big] .$$

\ep

Since for $\d\in(0,1)$, one has ${2\d\over1+\d}<1$. Consequently,
$$\cE^{\m,1}_\dbF(\O;L^1(0,T;\dbR))\subseteq\cE_\dbF^{\m,{2\d\over1+\d}}(\O;L^1(0,T;\dbR)).$$
This implies that the condition on $\a(\cd)$ assumed in (H2.3) is weaker than that in (H2.2). Also,
$$\cE^{\m,1}_{\cF_T}(\O;\dbR)\subseteq\cE_{\cF_T}^{\m,{2\d\over1+\d}}(\O;\dbR).$$
Hence, the condition assumed for $\xi$ in Proposition \ref{Prop2.3} is weaker than that in Proposition \ref{Prop2.2}. The trade-off is that Proposition \ref{Prop2.3} is only for one-dimensional BSDEs.\vspace{0.2cm}

Furthermore, we note that for one-dimensional BSDEs with the dependence of the generator on $Z$ being no more than quadratic, in Fan-Hu-Tang \cite{Fan-Hu-Tang 2019} and Fan-Hu \cite{Fan-Hu 2021}, the authors also proved a general existence and uniqueness result under the following hypothesis:

\ms

{\bf(H2.4)} Let the measurability condition in (H2.1) hold for $f:[0,T]\times\dbR\times\dbR\times\O\to\dbR$. There exist some constants $\b\ges0$, $\g>0$, $\d\in(0,1]$, and $\a(\cd)\in\cap_{\m\ges1}\cE^{\m,{2\d\over1+\d}}_\dbF(\O;L^1(0,T;\dbR))$ such that
\bel{|f|}
|f(t,y,z)|\les\a(t)+\b|y|+\g|z|^{1+\d},
\q\forall(t,y,z)\in[0,T]\times\dbR\times\dbR, \ee
\bel{f2}|f(t,y,z)-f(t,\bar y,z)|\les\b|y-\bar y|,\q\forall(t,y,\bar y,z)\in[0,T]\times\dbR\times\dbR\times\dbR\ee
and
\bel{f3}\ba{ll}
\ds f(t,y,(1-\th)z+\th\bar z)-\th f(t,y,\bar z)\les (1-\th)
\big(\a(t)+\b|y|+\g |z|^{1+\d}\big),\\
\ns\ds\qq\qq\qq\qq\qq \forall (t,y,z,\bar z)\in[0,T]\times\dbR\times\dbR\times\dbR,~\th\in(0,1).
\ea\ee
Moreover, $z\mapsto f(t,y,z)$ is continuous for all $(t,y)\in [0,T]\times \dbR$.\vspace{0.3cm}

Observe that if \rf{f2} and \rf{f3} hold, then for each $(t,y_1,y_2,z_1,z_2)\in[0,T]\times\dbR\times\dbR\times\dbR\times\dbR$ and each $\th\in(0,1)$, we have
$$
\ba{ll}
\ds g(t,y_1,z_1)-\th g(t,y_2,z_2)\les|g(t,y_1,z_1)- g(t,y_2,z_1)|+g(t,y_2,(1-\th)\hat z_{\th}+\th z_2)-\th g(t,y_2,z_2)\\
\ns\ds \hspace{3.75cm} \les \b|y_1-y_2|+(1-\th)(\a(t)+\b |y_2|+\g |\hat z_{\th}|^{1+\d})\\
\ns\ds \hspace{3.75cm} \les (1-\th)(\b |\hat y_{\th}|+\b |y_2|+\a(t)+\b |y_2|+\g |\hat z_{\th}|^{1+\d})\\
\ns\ds \hspace{3.75cm} =(1-\th)(\a(t)+2\b |y_2|+\b |\hat y_{\th}|+\g |\hat z_{\th}|^{1+\d}),\ea$$
where
$$
\hat y_{\th}:=\frac{y_1-\th y_2}{1-\th}\qq {\rm and}\qq \hat z_{\th}:=\frac{z_1-\th z_2}{1-\th}.\vspace{0.1cm}
$$
Then, by virtue of (i) of Theorem 5 in Fan-Hu-Tang \cite{Fan-Hu-Tang 2019} for the case $\d=1$ and Theorem 3.9 in Fan-Hu \cite{Fan-Hu 2021} for the case $\d\in (0,1)$, the following proposition follows immediately.\vspace{0.1cm}

\bp{Prop2.4} \sl Let {\rm(H2.4)} hold. Then for any $\xi\in\bigcap_{\m\ges1}\cE_{\cF_T}^{\m,{2\d\over1+\d}}(\O;\dbR)$, BSDE \rf{BSDE1} admits a unique adapted solution
$$(Y(\cd),Z(\cd))\in\Big[\bigcap_{\m\ges1}\cE^{\m,{2\d\over1+\d}}_\dbF(\O;C([0,T];\dbR))
\Big]\times  \Big[\bigcap_{p\ges1} L^p_\dbF(\O;L^2(0,T;\dbR))\Big].\vspace{0.1cm}
$$
\ep

\br{rmk:2.5} \rm
If the generator $g$ satisfies assumption {\rm(H2.4)} with \rf{f3} replaced with
\bel{f4}\ba{ll}
\ns\ds f(t,y,(1-\th)z+\th\bar z)-\th f(t,y,\bar z)\ges -(1-\th)
\big(\a(t)+\b|y|+\g |z|^{1+\d}\big),\\
\ns\ds\qq\qq\qq\qq\qq \forall (t,y,z,\bar z)\in[0,T]\times\dbR\times\dbR\times\dbR,~\th\in(0,1),\ea\ee
then it is easy to verify that $\bar g$ must satisfy assumption {\rm(H2.4)}, where
$$
\bar g(t,y,z):=-g(t,-y,-z),\q  \forall (t,y,z)\in [0,T]\times\dbR \times\dbR.
$$
Consequently, when \rf{f3} in {\rm(H2.4)} is replaced with \rf{f4}, the conclusion of Proposition \ref{Prop2.4} still holds.

\ms

Furthermore, it is well known that a convex function with linear growth must be a Lipschitz function. And, it is not hard to check that if $z\mapsto f(t,y,z)$ is convex for all $(t,y)\in[0,T]\times \dbR$ and \rf{|f|} holds, then \rf{f3} is also true. Then, {\rm(H2.4)} is weaker than {\rm(H2.3)} and {\rm(H2.2)} in the one-dimensional setting. Hence, Proposition \ref{Prop2.4} generalizes Proposition \ref{Prop2.3} and Proposition \ref{Prop2.2} for the one-dimensional case.

\ms

In addition, in this paper we will show that \rf{f3} (also \rf{f4}) can imply the continuity of $g$ with respect to $z$ in {\rm(H2.4)}, see Section 4 for details.
\er

In what follows, we will substantially extend the above results to BSVIEs, by which we mean that some of the extensions are new even for BSDEs.

\subsection{ A crucial lemma }

We now present a lemma which has played a crucial rule in proving the above results, and it will play some important role below when we discuss the situation of unbounded free terms.

\ms

\bl{Lemma2.5} \sl Suppose that $\xi$ is an $\cF_T$-measurable random variable and random field $f:[0,T]\times\dbR^n\times\dbR^n\times\O\to\dbR^n$ is measurable such that for each $(y,z)\in\dbR^n\times\dbR^n$, $t\mapsto f(t,y,z)$ is $\dbF$-progressively measurable.
Let
$(Y(\cd),Z(\cd))$
be an adapted solution of BSDE \rf{BSDE1}
such that
$Y(\cd)\in\bigcap_{\m\ges 1}\cE^{\mu,\frac{2\d}{1+\d}}_\dbF(\Omega;C([0,T];\dbR^n))$ and
\bel{2.9}\sgn \big(Y^i(s)\big)f^i(s,Y(s),Z(s))\les \a_0(s)+\b|Z^i(s)|^{1+\d},\q s \in[0,T],~1\les i\les n,\ee
for some constants $\b>0$, $\d\in(0,1]$, and some map
$\a_0(\cd)\in\bigcap_{\mu\ges 1}\cE^{\mu,\frac{2\d}{1+\d}}_\dbF(\Omega;L^1(0,T;\dbR_+))$.
Then there exists a constant $K>0$ depending only on $(\d,\b,T)$ such that for each $i=1,\cdots,n$,
\bel{2.13}e^{|Y^i(t)|^{2\d\over1+\d}}+\dbE_t\Big[\int_t^T|Z^i(s)|^2ds\Big]\les K\dbE_t\Big
[e^{K\big(|\xi^i|+\int_t^T\a_0(\t)d\t\big)^{2\d\over1+\d}}\Big],\ \ t\in [0,T],\ee
and for each $p>0$, there exists a constant $K_p>0$ depending only on $(\d,\b,T)$ and $p$ such that for each $i=1,\cdots,n$,
\bel{new-2-5}
\ds\dbE\Big[\sup_{s\in [t,T]} e^{p|Y^i(s)|^{2\d\over1+\d}}\Big]+\dbE\Big[\Big(\int_t^T |Z^i(s)|^2ds\Big)^{p\over 2}\Big]\les K_p\dbE\Big[e^{K_p\big(|\xi^i|+\int_t^T\a_0(\t)d\t\big)^{2\d\over1+\d}}\Big], \ \ t\in [0,T].
\ee
\el

\ms

\it Proof. \rm Let $t\in[0,T)$ be fixed, $(Y(\cd),Z(\cd))$ be the adapted solution of BSDE \rf{BSDE1}. Let
$$\f(r,\rho)=e^{\m(r)\rho^\th},\qq(r,\rho)\in[t,T]\times[\rho_0,\infty),$$
with $\m(\cd)\ges1$, $\m'(r)>0$, $\rho_0>1$, and $\th\in (0,1]$, all being undetermined. Note that the smaller the $\th$, the smaller the $\f(r,\rho)$. We calculate
$$\ba{ll}
\ss\ds\f_r(r,\rho)=\f(r,\rho)\m'(r)\rho^\th,\\
\ss\ds\f_\rho(r,\rho)=\f(r,\rho)\th\m(r)\rho^{\th-1},\\
\ss\ds\f_{\rho\rho}(r,\rho)=\f(r,\rho)\(\th^2\m(r)^2\rho^{2\th-2}+\th(\th-1)\m(r)
\rho^{\th-2}\)\\
\ss\ds\qq\qq=\f(r,\rho)\th\m(r)\rho^{\th-2}\(\th\m(r)\rho^\th-(1-\th)\)\ges\
\f(r,\rho)\th\m(r)\rho^{\th-2}(\th\rho_0^\th-1)>0,\ea$$
provided $\rho_0>\th^{-{1\over\th}}$. For such a case, $\rho\mapsto\f(r,\rho)$ is strictly convex on $[\rho_0,\infty)$. Note that $\ds\lim_{\th\downarrow0}\th^{-{1\over\th}}=+\infty$. Thus, in this approach, when $\th$ is getting small, $\rho_0$ is getting large. For $i=1,\cdots,n$, define
$$\cY^i(r)=|Y^i(r)|+k+\int_t^r\a_0(s)ds,\ \ r\in[t,T],$$
with $k\ges \rho_0$, and let $L^i(\cd)$ denote the local time of $Y^i(\cd)$ at $0$. In view of \rf{2.9}, applying It\^o-Tanaka's formula to $s\mapsto\f(s,\cY^i(s))$, one has
$$\ba{ll}
\ss\ds d\f\big(s,\cY^i (s)\big)=\(\f_s\big(s,\cY^i (s)\big)+\f_\rho\big(s,\cY^i (s)\big)
\big(-\sgn\big(Y^i(s)\big)f^i(s,Y(s),Z(s))+\a_0(s)\big)\\
\ss\ds\qq\qq\qq\ \ +{1\over2}\f_{\rho\rho}\big(s,\cY^i (s)\big)|Z^i(s)|^2\)ds+\f_\rho\big(s,\cY^i (s)\big)dL^i(s)+\f_\rho\big(s,\cY^i (s)\big)\sgn\big(Y^i(s)\big)Z^i(s)dW(s)\\
\ss\ds\hspace{1.95cm}\ges\f\big(s,\cY^i (s)\big)\Big[\m'(s)\cY^i (s)^\th-\th\m(s)\cY^i (s)^{\th-1}
\b |Z^i(s)|^{1+\d}\\
\ss\ds\hspace{4.3cm}+{\th\m(s)\over 2}\cY^i (s)^{\th-2}\(\th\m(s)\cY^i (s)^\th
-(1-\th)\)|Z^i(s)|^2\Big]ds\\
\ss\ds\hspace{2.3cm}+\f\big(s,\cY^i (s)\big)\th\m(s)\cY^i (s)^{\th-1}\sgn\big(Y^i(s)\big)Z^i(s)dW(s),\ \  s\in [t,T].\ea$$
For each $r\in [t,T]$ and $m\ges 1$, define the following stopping time:
$$
\sigma_m^r:=\inf\Big\{s\in [r,T]\big| |Y(s)|+\int_r^s |Z(\t)|^2 d\t\ges m \Big\}\wedge T
$$
with the convention $\inf \emptyset=+\infty$. Then, by integrating on
$[r,\sigma_m^r]$ we get that for each $m\ges 1$ and $r\in [t,T]$,
\bel{new-1}
\ba{ll}
\ss\ds\f\big(r,\cY^i(r)\big)+X_r^m \les\f\big(\sigma_m^r,\cY^i(\sigma_m^r)\big)+\int_r^{\sigma_m^r}
\f\big(s,\cY^i(s)\big)\Big[
-\m'(s)\cY^i(s)^\th\\
\ss\ds\qq\qq\qq +{\th\m(s)\over2}\cY^i(s)^{\th-2}\(2\b\cY^i(s)|Z^i(s)|^{1+\d}
-\th\m(s)\cY^i(s)^\th|Z^i(s)|^2+(1-\th)|Z^i(s)|^2\)\Big]ds,\ea\ee
where
$$
X_r^m:=\int_r^{\sigma_m^r} \f\big(s,\cY^i (s)\big)\th\m(s)\cY^i (s)^{\th-1}\sgn\big(Y^i(s)\big)Z^i(s)dW(s).
$$
We split the rest into two cases.

\ms

\it Case 1. \rm For $\d\in(0,1)$, one has
$$\ba{ll}
\ss\ds2\b\cY^i(s)|Z^i(s)|^{1+\d}=2\b\(\cY^i(s)^\th|Z^i(s)|^2\)^{1+\d\over2}
\cY^i(s)^{1-{\th{1+\d\over2}}}\\
\ss\ds\les{\th\over2}\cY^i(s)^\th|Z^i(s)|^2+K_0\cY^i(s)^{
(1-\th{1+\d\over2}){2\over1-\d}}={\th\over2}\cY^i(s)^\th|Z^i(s)|^2+K_0\cY^i(s)^{
2-\th(1+\d)\over1-\d},\ \ s\in [t,T].\ea$$
In the above, $K_0>0$ is an absolute constant, only depending on $\b,\d,\th$. We now require that
$${2-\th(1+\d)\over1-\d}\les2,$$
which is equivalent to $\th\ges{2\d\over1+\d}$. Hence, the best possibility is to choose
$\th={2\d\over1+\d}$. Then with such a $\th$, we have (noting $\mu(s)\ges 1$ and $\cY^i(s)\ges k$) that for each $m\ges 1$ and $r\in [t,T]$,
\bel{new-2}
\ba{ll}
\ss\ds\f(r,\cY^i(r))+X_r^m\les\f\big(\sigma_m^r,\cY^i(\sigma_m^r)\big)
+\int_r^{\sigma_m^r}
\f\big(s,\cY^i(s)\big)\cY^i(s)^\th
\Big[-\(\m'(s)-{\th K_0\over 2}\m(s)\)\\
\ss\ds\hspace{6cm}-{\th\m(s)\over2}\({\th \over 2}\cY^i(s)^\th-(1-\th)\)|Z^i(s)|^2\)\Big]ds\\
\ss\ds\hspace{2.7cm}\les\f\big(\sigma_m^r,\cY^i(\sigma_m^r)\big)
+\int_r^{\sigma_m^r}\f\big(s,\cY^i(s)\big)
\cY^i(s)^\th
\Big[-\(\m'(s)-{\th K_0\over 2}\m(s)\)\)\\
\ss\ds\hspace{6cm}-{\th\m(s)\over2}\({\th\over 2}k^\th-1\)|Z^i(s)|^2\)\Big]ds.\ea\ee
Now, we choose $k\ges 1+\th^{-{1\over \th}}$ and $\m(\cd)\ges 1$ so that
\bel{new-4}
{\th \over2}\({\th\over2}k^\th-1\)\ges1,\qq\m'(s)\ges {\th K_0\over 2}\m(s),\q s\in[t,T].\ee
Then, for each $m\ges 1$ and $r\in [t,T]$, in view of $\dbE_r[X_r^m]=0$, by taking conditional expectation with respect to $\cF_r$ in \rf{new-2} we can obtain that
\bel{f1}\f(r,\cY^i(r))+\dbE_r\Big[\int_r^{\sigma_m^r}|Z^i(s)|^2ds\Big]\les
\dbE_r\big[\f\big(\sigma_m^r,\cY^i(\sigma_m^r)\big)\big],\ee
with $k$ depending on $\d$. By Fatou's lemma and Lebesgue's dominated convergence theorem, as well as the integrability of $Y(\cd)$ and $\a_0(\cd)$, sending $m$ to infinity in \rf{new-4} yields the following
$$e^{\m(r)\big(|Y^i(r)|+k+\int_t^r\a_0(\t)d\t\big)^{2\d\over 1+\d}}+\dbE_r\Big[\int_r^T|Z^i(s)|^2ds\Big]\les
\dbE_r\Big[e^{\m(T)\big(|\xi^i|+k+\int_t^T\a_0(\t)d\t\big)^{2\d\over1+\d}}\Big],\ \  r\in [t,T],$$
which is just the following result
\bel{2.12}e^{\m(t)\big(|Y^i(t)|+k\big)^{2\d\over1+\d}}+\dbE_t\Big[\int_t^T|Z^i(s)|^2ds\Big]\les
\dbE_t\Big[e^{\m(T)\big(|\xi^i|+k+\int_t^T \a_0(\t)d\t\big)^{2\d\over1+\d}}\Big]\ee
for the case $\d\in(0,1)$ when $r=t$. Furthermore, it follows from Doob's martingale inequality that for each $p>1$,
\bel{new-2-1}\ba{ll}
\ds\dbE\Big[\sup_{r\in [t,T]} e^{p\m(r)\big(|Y^i(r)|+k+\int_t^r\a_0(\t)d\t\big)^{2\d\over1+\d}}\Big]\les
\dbE\Big[\sup_{r\in [t,T]}\Big\{\dbE_r\Big[e^{\m(T)\big(|\xi^i|+k+\int_t^T\a_0(\t)d\t\big)^{2\d\over1+\d}}\Big]\Big\}^p\Big]\\
\hspace{6.2cm}\ds\les \big({p\over p-1}\big)^p\dbE\Big[e^{p\m(T)\big(|\xi^i|+k+\int_t^T\a_0(\t)d\t\big)^{2\d\over1+\d}}\Big].
\ea\ee
On the other hand, by \rf{new-2} and \rf{new-4} we can deduce that for each $m\ges 1$ and $p>1$,
$$
\dbE\Big[\big(\int_r^{\sigma_m^r} |Z^i(s)|^2ds\big)^{p\over 2}\Big]\les 2^{p-1}\Big\{\dbE\Big[\big(\f\big(\sigma_m^r,\cY^i(\sigma_m^r)\big)\big)^{p\over 2}\Big]+\dbE\big[|X_r^m|^{p\over 2}\big]\Big\},\ \ r\in [t,T].
$$
By virtue of BDG's inequality, the definition of $X_r^m$ and H\"{o}lder's inequality and in view of $\cY^i(s)\ges k\ges 1$ and $\th\in (0,1)$, observe that
$$
\ba{ll}
\ds2^{p-1}\dbE\big[|X_r^m|^{p\over 2}\big]\les 2^{p-1} (\m(T))^{p\over 2} \dbE\Big[\Big(\int_r^{\sigma_m^r} \big[\f\big(s,\cY^i (s)\big)\big]^2|Z^i(s)|^2 ds\Big)^{p\over 4}\Big]\\
\hspace{2.5cm}\ds\les 2^{p-1}(\m(T))^{p\over 2} \dbE\Big[ \sup_{s\in [r,\sigma_m^r]}\big[\big(\f\big(s,\cY^i (s)\big)\big)^{p\over 2}\big] \Big(\int_r^{\sigma_m^r}|Z^i(s)|^2 ds\Big)^{p\over 4}\Big]\\
\hspace{2.5cm}\ds\les 2^{2p-3}(\m(T))^p\dbE\[\sup_{s\in [r,\sigma_m^r]}\big(\f\big(s,\cY^i (s)\big)\big)^p\]+{1\over 2} \dbE\Big[\Big(\int_r^{\sigma_m^r}|Z^i(s)|^2 ds\Big)^{p\over 2}\Big],\ \ r\in [t,T].
\ea
$$
We have
$$
\ba{ll}
\ds\dbE\Big[\Big(\int_r^{\sigma_m^r} |Z^i(s)|^2ds\Big)^{p\over 2}\Big]\les 2^p\dbE\Big[\big(\f\big(\sigma_m^r,\cY^i(\sigma_m^r)\big)\big)^{p\over 2}\Big]+2^{2p-2}(\m(T))^p\dbE\Big[\sup_{s\in [r,\sigma_m^r]}\big(\f\big(s,\cY^i (s)\big)\big)^p\Big]\\
\hspace{3.7cm}\ds\les (8\m(T))^p\dbE\Big[\sup_{s\in [r,\sigma_m^r]}\big(\f\big(s,\cY^i (s)\big)\big)^p\Big],\qq r\in [t,T],\ea
$$
and then (letting $m\rightarrow \infty$ and $r=t$)
\bel{new-2-2}\ba{ll}
\ds\dbE\Big[\Big(\int_t^T |Z^i(s)|^2ds\Big)^{p\over 2}\Big]\les (8\m(T))^p\dbE\Big[\sup_{s\in [t,T]}\big(\f\big(s,\cY^i (s)\big)\big)^p\Big]\\
\hspace{3.5cm}\ds =(8\m(T))^p\dbE\Big[\sup_{r\in [t,T]} e^{p\m(r)\big(|Y^i(r)|+k+\int_t^r\a_0(\t)d\t\big)^{2\d\over1+\d}}\Big].
\ea\ee
Combining \rf{new-2-1} and \rf{new-2-2} yields
\bel{new-2-4}
\ba{ll}
\ds\dbE\Big[\sup_{s\in [t,T]} e^{p\m(s)\big(|Y^i(s)|+k+\int_t^s\a_0(\t)d\t\big)^{2\d\over1+\d}}\Big]
+\dbE\Big[\Big(\int_t^T |Z^i(s)|^2ds\Big)^{p\over 2}\Big]\\
\ds\qq\les \big({p\over p-1}\big)^p\big((8\m(T))^p+1 \big)\dbE\Big[e^{p\m(T)\big(|\xi^i|+k+\int_t^T
\a_0(\t)d\t\big)^{2\d\over1+\d}}\Big],\ \ \forall p>1.
\ea\ee
for the case $\delta\in (0,1)$.
Finally, \rf{2.13} and \rf{new-2-5} follows from \rf{2.12} and \rf{new-2-4} as well for the case of $\d\in(0,1)$.

\ms

\it Case 2. \rm For $\d=1$, we have that $\th={2\d\over1+\d}=1$, and it follows from \rf{new-1} that for each $m\ges 1$,
\bel{new-3}
\ba{ll}
\ss\ds\f(r,\cY^i(r))+X_r^m\les\f\big(\sigma_m^r,\cY^i(\sigma_m^r)\big)
+\int_r^{\sigma_m^r}\f\big(s,\cY^i(s)
\big)\Big[-\cY^i(s)\m'(s)\\
\ss\ds\hspace{5.5cm}+{\m(s)\over2}\cY^i(s)^{-1}
\(2\b\cY^i(s)|Z^i(s)|^2-\m(s)\cY^i(s)|Z^i(s)|^2\)\Big]ds\\
\ss\ds\ \ =\f\big(\sigma_m^r,\cY^i(\sigma_m^r)\big)
+\int_r^{\sigma_m^r}\f\big(s,\cY^i(s)\big)
\Big[-\cY^i(s)\m'(s)+{\m(s)\over2}
\big(2\b-\m(s)\big)|Z^i(s)|^2\Big]ds,\ \ r\in [t,T].\ea\ee
Hence, by choosing
$$\m'(s)>0,\qq{\m(s)\over2}\big(\m(s)-2\b\big)\ges 1,\q s\in[0,T],$$
and taking conditional expectation with respect to $\cF_r$ in \rf{new-3} we get \rf{f1} for the case $\d=1$, and sending $m$ to infinity and letting $r=t$ yields \rf{2.12} for the case $\d=1$. Then, a similar argument to that from \rf{new-2-1} to \rf{new-2-2} yields \rf{new-2-4} for the case $\delta=1$.
Finally, \rf{2.13} and \rf{new-2-5} follows from \rf{2.12} and \rf{new-2-4} as well for the case of $\d=1$. \endpf

\br{rmk:2.6}
\rm
The following assertion can be obtained by a similar proof to above. Let the assumptions in Lemma \ref{Lemma2.5} hold with \rf{2.9} replaced by
$$I_{(Y^i(s)>0)}f^i(s,Y(s),Z(s))\les \a_0(s)+\b|Z^i(s)|^{1+\d},\q \as.$$
Then there exist a $k\ges 1$ and a differentiable function $\m:[0,T]\to[1,\infty)$ such that for each $i=1,\cdots,n$,
$$
e^{\m(t)\big((Y^i(t))^+ +k\big)^{2\d\over1+\d}}+\dbE_t\Big[\int_t^T  I_{(Y^i(s)>0)} |Z^i(s)|^2ds\Big]\les
\dbE_t\Big[e^{\m(T)\big((\xi^i)^+ +k+\int_t^T \a_0(\t) d\t\big)^{2\d\over1+\d}}\Big],\ \ t\in[0,T].
$$
Consequently, there exists a constant $K>0$ depending only on $(\d,\b,T)$ such that for each $i=1,\cdots,n$,
$$
e^{[(Y^i(t))^+]^{2\d\over1+\d}}+\dbE_t\Big[\int_t^T I_{(Y^i(s)>0)} |Z^i(s)|^2ds\Big]\les K\dbE_t\Big
[e^{K\big((\xi^i)^+ +\int_t^T\a_0(\t)d\t\big)^{2\d\over1+\d}}\Big],\ \ t\in [0,T].
$$
\er

\section{BSVIEs with Bounded Free Terms}

For convenience, we rewrite BSVIE \rf{BSVIE1} here
\bel{BSVIE1*}Y(t)=\psi(t)+\int_t^Tg(t,s,Y(s),Z(t,s))ds-\int_t^TZ(t,s)dW(s),
\q t\in[0,T].\ee
In this section, we consider the case of coupled multi-dimensional BSVIEs with bounded free terms. In what follows, we use $K>0$ to represent a generic constant which could be different from line to line.

\subsection{Diagonally strictly quadratic case}

Let us begin with the following hypothesis on the generator $g$ of BSVIE \rf{BSVIE1*} which is comparable with (H2.1) for BSDEs.

\ms

{\bf(H3.1)} Map $g:\D^*[0,T]\times\dbR^n\times\dbR^n\times\O\to\dbR^n$ is measurable so that $s\mapsto g(t,s,y,z)$ is $\dbF$-progressively measurable on $[t,T]$ for each $(t,y,z)\in[0,T)\times\dbR^n\times\dbR^n$. There exist a non-negative valued process $\a(\cd\,,\cd)\in L^\infty(0,T;L^\infty_\dbF(\O;L^1(\cd\,,T;\dbR)))$, a continuous, monotone increasing function $\phi:[0,\infty)\to[0,\infty)$ with $\phi(0)=0$, and constants $\b\ges\g>0$, $\d\in[0,1)$ such that
\bel{growth1}\ba{ll}
\ds-\a(t,s)-\b\(|y|+\sum_{j\ne i}|z^j|^{1+\d}\)+\g|z^i|^2\les g^i(t,s,y,z)\les\a(t,s)+\b\(|y|+\sum_{j\ne i}|z^j|^{1+\d}+|z^i|^2\),\\
\ss\ds\qq\qq\qq\qq\qq\qq\qq\qq\forall(t,s,y,z)\in\D^*[0,T]\times\dbR^n\times\dbR^n,~i=1,2,\cds,n.\ea\ee
Further, it holds that
\bel{g-g}\ba{ll}
\ss\ds|g^i(t,s,y,z)-g^i(t,s,\bar y,\bar z)|\les\phi(|y|\vee|\bar y|)
\Big[\big(1+|z|+|\bar z|\big)\big(|y-\bar y|+|z^i-\bar z^i|\big)\\
\ss\ds\qq\qq\qq\qq\qq\qq\qq\qq\qq+\big(1+|z|^\d+|\bar z|^\d\big)\sum_{j\neq i}|z^j-\bar z^j|\Big],\\
\ss\ds\qq\qq\qq\qq\qq\qq\qq\forall(t,s)\in\D^*[0,T],~y,\bar y,z,\bar z\in\dbR^n,~i=1,2,\cds,n.\ea\ee

\ms

Let us introduce the following seemingly a little more general hypothesis than the above (H3.1).

\ms

{\bf(H3.1)$'$ \rm Let (H3.1) hold with \rf{growth1} replaced by the following: For each $i=1,2,\cds,n$, either
\bel{growth1*}\ba{ll}
\ss\ds-\a(t,s)-\b\(|y|+\sum_{j\ne i}|z^j|^{1+\d}\)+\g|z^i|^2\les g^i(t,s,y,z)\les\a(t,s)+\b\(|y|+\sum_{j\ne i}|z^j|^{1+\d}+|z^i|^2\),\\
\ss\ds\qq\qq\qq\qq\qq\qq\qq\qq\qq\forall(t,s,y,z)\in\D^*[0,T]\times \dbR^n\times\dbR^n,\ea\ee
or
\bel{growth2*}\ba{ll}
\ss\ds\a(t,s)+\b\(|y|+\sum_{j\ne i}|z^j|^{1+\d}\)-\g|z^i|^2\ges g^i(t,s,y,z)\ges-\a(t,s)-\b\(|y|+\sum_{j\ne i}|z^j|^{1+\d}+|z^i|^2\),\\
\ss\ds\qq\qq\qq\qq\qq\qq\qq\qq\qq\forall(t,s,y,z)\in\D^*[0,T]\times \dbR^n\times\dbR^n.\ea\ee

We claim that (H3.1) and (H3.1)$'$ are essentially equivalent. In fact, if (H3.1)$'$ holds, by relabeling, we may assume that for some $k=0,1,2,\cds,n$, one has \rf{growth1*} holds for $1\les i\les k$, and \rf{growth2*} holds for $k+1\les i\les n$. If $k=n$, it means that for all $i=1,2,\cds,n$, \rf{growth1*} holds, and if $k=0$, it means that for all $i=1,2,\cds,n$, \rf{growth2*} holds. Now, we define
$$\ba{ll}
\ss\bar z\equiv(\bar z^1,\cds,\bar z^k,\bar z^{k+1},\cds,\bar z^n)=(z^1,\cds,z^k,-z^{k+1},
\cds,-z^n),\\
\ss\bar y\equiv(\bar y^1,\cds,\bar y^k,\bar y^{k+1},\cds,\bar y^n)=(y^1,\cds,y^k,-y^{k+1},
\cds,-y^n),\\
\ss\ds \bar g^i(t,s,y,z)=g^i(t,s,y,z) ,\qq~~1\les i\les k,\\
\ss\ds \bar g^i(t,s,y,z)=-g^i(t,s,\bar y,
\bar z) ,\qq k+1\les i\les n.\ea$$
Then (H3.1) holds for $\bar g(t,s,y,z)$. Thus, (H3.1) and (H3.1)$'$ are equivalent in such a sense. In what follows, we will use (H3.1) instead of (H3.1)$'$.

\ss

Next, condition \rf{growth1} means that the $i$-th component $g^i$ of the generator $g$ has exactly quadratic growth in the $i$-th component $z^i$ of $z$, and has sub-quadratical growth in the other components of $z$. Such a condition is referred to as the {\it diagonally strictly quadratic} condition of $z\mapsto g(t,s,y,z)$, which was introduced for BSDEs the first time in \cite{Fan-Hu-Tang 2019}. Roughly speaking, \rf{growth1} amounts to saying that
$$0<\g\les\liminf_{|z^i|\to\infty}{g^i(t,s,y,z)\over|z^i|^2}\les
\limsup_{|z^i|\to\infty}{g^i(t,s,y,z)\over|z^i|^2}\les\b.$$
Hypothesis (H3.1) is satisfied by many examples. Here is a typical one:
$$g^i(t,s,y,z)=\sqrt{(|y|+1)(|z|+1)}+|z|\cos|z^i|-|z|^{1+\d}+ |z^i|^2,\ \ i=1,\cds,n.$$
To establish the existence and uniqueness of adapted solution $(Y(\cd),Z(\cd\,,\cd))$ to our BSVIE \rf{BSVIE1}, we first present the following result.

\bp{pro:3.1} \sl Let {\rm(H3.1)} hold. Then, for each $\psi(\cd)\in L_{ \cF_T }^\infty(0,T;\dbR^n)$, $y(\cd)\in L^\infty_\dbF(0,T;\dbR^n)$, the following BSVIE
\bel{eq:1.4}Y(t)=\psi(t)+\int_t^T  g(t,s,y(s),Z(t,s))ds-\int_t^T Z(t,s)dW(s),\ \  t\in[0,T]\ee
admits a unique adapted solution $(Y(\cd),Z(\cd\,,\cd))\in L^\infty_\dbF(0,T;\dbR^n)
\times\overline{{\rm BMO}}(\D^*[0,T];\dbR^n)$. Furthermore, there exists a constant $K_0>0$ depending on $(\psi(\cd),\a(\cd\,,\cd),\b,\g,\d,n,T)$ such that if
\bel{|y|}\|y(t)\|_\infty\les K_0e^{K_0(T-t)},\qq\ae\;t\in[0,T],\ee
then
\bel{eq:1.5}\|Y(t)\|_\infty\les K_0e^{K_0(T-t)},\qq\ae\;t\in[0,T],\ee
and
\bel{eq:1.6}\|Z(\cd\,,\cd)\|_{\cl{\rm BMO}(\D^*\T)}\les\sqrt{nK_0}\exp\Big(K_0e^{K_0T}\)\equiv\bar K_0.\ee

\ep

\it Proof. \rm For given $\psi(\cd)\in L_{ \cF_T }^\infty(0,T;\dbR^n)$ and $y(\cd)\in L_\dbF^\infty(0,T;\dbR^n)$, it follows from Proposition \ref{Prop2.1} (or \cite[Theorem 2.5]{Fan-Hu-Tang 2020}) that for $\ae\,t\in[0,T]$, the following BSDE
\bel{eq:1.7}\eta(t,r)=\psi(t)+\int_r^T  g(t,s,y(s),\z(t,s))ds-\int_r^T \z(t,s)dW(s),\ \  r\in[t,T]\ee
admits a unique adapted solution $(\eta(t,\cd),\z(t,\cd))\in L^\infty_\dbF(\O;C([t,T];\dbR^n))\times\overline{{\rm BMO}}([t,T];\dbR^n)$. Moreover, there exists a constant $K>0$ depending on $(\a(\cd\,,\cd),\b,\g,\d,n,T)$ and $\|y(\cd)\|_{L^\infty_\dbF(0,T;\dbR^n)}$ such that
\bel{eq:1.8}
\|\eta(t,\cd)\|_{L^\infty_\dbF(\O;C([t,T];\dbR^n))}
+\|\z(t,\cd)\|_{\overline{{\rm BMO}}[t,T]}\les K,\q\ae~t\in[0,T].\ee
Let
$$Y(t)=\eta(t,t),\qq Z(t,s)=\z(t,s),\qq(t,s)\in\D[0,T].$$
Then $(Y(\cd),Z(\cd\,,\cd))\in L^\infty_\dbF(0,T;\dbR^n)\times \overline{{\rm BMO}}(\D^*[0,T];\dbR^n)$ is an adapted solution of BSVIE \eqref{eq:1.4}.

\ms

Suppose now that $(\wt Y(\cd),\wt Z(\cd\,,\cd))\in L^\infty_\dbF (0,T;\dbR^n)\times\overline{{\rm BMO}}(\D^*[0,T];\dbR^n)$ is another adapted solution of BSVIE \eqref{eq:1.4}. That is,
\bel{eq:1.9}\wt Y(t)=\psi(t)+\int_t^T  g(t,s,y(s),\wt Z(t,s))ds-\int_t^T \wt Z(t,s)dW(s),\ \   t\in[0,T].\ee
With the given $(\wt Y(\cd),\wt Z(\cd\,,\cd))$, for $\ae\;t\in[0,T]$, the following BSDE:
\bel{eq:1.10}\wt\eta(t,r)=\psi(t)+\int_r^Tg(t,s,y(s),\wt Z(t,s))ds-\int_r^T \wt\z(t,s)dW(s),\ \  r\in[t,T]\ee
admits a unique solution $(\wt\eta(t,\cd),\wt\z(t,\cd))\in L^\infty_\dbF(\O;C([t,T];\dbR^n))\times\overline{{\rm BMO}}([t,T];\dbR^n)$ (see Proposition \ref{Prop2.1} or \cite[Theorem 2.5]{Fan-Hu-Tang 2020}). By \rf{eq:1.9} and \rf{eq:1.10}, it is immediate that for ${\rm a.e.}\ t\in[0,T]$,
\bel{eq:1.11}\wt Y(t)=\dbE_t\Big[\psi(t)+\int_t^Tg(t,s,y(s),\wt Z(t,s))ds\Big]=\wt\eta(t,t)\ee
and
$$\int_t^T \wt Z(t,s)dW(s)=\int_t^T \wt\z(t,s)dW(s),$$
which implies
\bel{eq:1.12}\wt Z(t,s)=\wt\z(t,s),\ \ {\rm a.e.}\ s\in (t,T].\ee
It then follows from \rf{eq:1.10} that for $\ae\;t\in[0,T]$,
$$\wt\eta(t,r)=\psi(t)+\int_r^T  g(t,s,y(s),\wt\z(t,s))ds-\int_r^T \wt\z(t,s)dW(s),\ \  r\in[t,T],$$
which means that $(\wt\eta(t,\cd),\wt\z(t,\cd))$ is an adapted solution of BSDE \rf{eq:1.7} in $L^\infty_\dbF(\O;C([t,T];\dbR^n))\times \overline{{\rm BMO}}([t,T];$ $\dbR^n)$, and then
\bel{eq:1.13}\wt\eta(t,\cd)=\eta(t,\cd),\qq\wt\z(t,\cd)=\z(t,\cd).\ee
Combining \rf{eq:1.11}, \rf{eq:1.12} and \rf{eq:1.13} yields that
$$\wt Y(t)=\eta(t,t),\qq\wt Z(t,s)=\z(t,s),\qq\ae\;(t,s)\in\D^*[0,T].$$
Consequently, BSVIE \eqref{eq:1.4} admits a unique adapted solution $(Y(\cd),Z(\cd\,,\cd))\in L^\infty_\dbF(\O;C([t,T];\dbR^n))\times\overline{{\rm BMO}}$ $(\D^*[0,T];\dbR^n)$.

\ms

We now turn to the proof of the second part of Proposition \ref{pro:3.1}. First of all, for any given $k\ges 1$, by Young's inequality that
\bel{Young-inequality-2} k n^{1+\d} |\z(t,s)|^{1+\d}\les{\frac {1} {\wt M}}|\z(t,s)|^2+K,\ \ s\in[t,T].\ee
where $K$ is a constant depending on $\wt M$, $\d$ and $k$,  and
$$\ba{ll}
\ns\ds \wt M\=2\sup_{1\les i\les n}\esssup_{t\in[0,T]}\esssup_{\t}\Big\|\dbE_\t\int_\t^T
|\zeta^i(t,s)|^2ds \Big\|_{\infty}<\infty,
\ea
$$
Define process $\wt \zeta^i(t,\cd)$ as $\frac{1}{\sqrt{\wt M}}\zeta^i(t,\cd)$ on $[t,T]$, we see that
$\big\|\wt\zeta^i(t,\cd)\big\|_{\overline{\rm BMO}[t,T]}\les \frac 1 2.$ By John-Nirenberg inequality, we see that
$$\ba{ll}
\ns\ds \dbE_t\Big[\exp\Big(\frac {1}{\wt M}\int_t^T|\zeta^i(t,s)|^2ds \Big)\Big]\les 2.
\ea
$$
Therefore, for any $k\ges1$,
$$\ba{ll}
\ns\ds \dbE_t \Big[\exp\Big(k\int_t^T|\zeta(t,s)|^{1+\d}ds \Big)\Big]\les
 \dbE_t\Big[\exp\Big(k n^\d \sum_{i=1}^n\int_t^T|\zeta^i(t,s)|^{1+\d}ds \Big)\Big]\\
\ns\ds\ \  = \dbE_t\Big[\prod_{i=1}^n \exp\Big(k n^\d \int_t^T|\zeta^i(t,s)|^{1+\d}ds \Big)\Big]
\les \prod_{i=1}^n \Big\{\dbE_t\Big[\exp\Big(k n^{1+\d} \int_t^T|\zeta^i(t,s)|^{1+\d}ds\Big)\Big] \Big\}^{\frac 1 n}\\
\ns\ds\ \ \les \prod_{i=1}^n \Big\{\dbE_t\Big[\exp\Big(\int_t^T\frac{|\zeta^i(t,s)|^{2}}{\wt M}ds \Big)\Big]\Big\}^{\frac 1 n}e^K \les 2e^K<\infty.
\ea
$$
This together with the integrability  of $\a(t,\cd)$, $y(\cd)$ indicates that
$\a_0(t,\cd)\in \bigcap_{\mu\ges 1}\cE^{\mu,1}_\dbF(\Omega;L^1(t,T;\dbR_+))$, where
$$\a_0(t,s)\=\a(t,s)+\b\big(|y(s)|+|\z(t,s)|^{1+\d}\big),\vspace{0.1cm}$$
Taking into account (H3.1), together with BSDE \rf{eq:1.7}, by virtue of Lemma \ref{Lemma2.5} (or \cite[Proposition 2.1]{Fan-Hu-Tang 2019}),
we have for each $i=1,2,\cds,n$, $\ae\;t\in[0,T]$ and each $r\in[t,T]$, noting that $\psi(\cd)$ is bounded,
\bel{eq:1.14}\ba{ll}
\ss\ds \exp\(\g|\eta^i(t,r)|\)+\dbE_r\Big[\int_r^T |\z^i (t,s)|^2 ds\Big] \\
\ \ \ds\les K \dbE_r\Big[\exp\Big(K|\psi^i(t)|+K\int_r^T \big[\a(t,s)+\b\big(|y(s)|+|\z(t,s)|^{1+\d}\big)\big]ds\Big)\Big]\\
\ \ \ds\les K\exp\(K\int_r^T\|y(s)\|_\infty ds\)\dbE_r\Big[\exp\(K \int_r^T |\z (t,s)|^{1+\d} ds\)\Big].\ea\ee
Note that on the right-hand side, the term $\int_r^T|\z(t,s)|^{1+\d}ds$ appears in the exponent, which cannot be directly controlled by the term $\int_r^T|\z(t,s)|^2ds$. We now make use of the diagonally strictly quadratic condition \rf{growth1}. Let $i=1,\cds,n$, and $m\ges1$, For almost every $t\in[0,T]$ and each $r\in [t,T]$, we define the following $\dbF$-stopping time
$$\si_m^{t,r,i}=\inf\Big\{\t\in [r,T]\bigm|\int_r^\t|\z^i(t,s)|^2 ds\ges m\Big\}\land T,$$
with the convention $\inf\varnothing=+\infty$, and define
$$X_m^{t,r,i}=\eta^i(t,r)-\eta^i(t,\si_m^{t,r,i})+\int_r^{\si_m^{t,r,i}}  \big[\a(t,s)+\b\big(|y(s)|+|\z(t,s)|^{1+\d}\big)\big]ds.$$
Then it follows from the first inequality in \rf{growth1} and BSDE \rf{eq:1.7} that
\bel{eq:1.18}\ba{ll}
\ds\g\int_r^{\si^{t,r,i}_m}\3n\2n|\z^i(t,s)|^2ds\les\int_r^{\si_m^{t,r,i}}
\3n\big[\a(t,s)+\b\big(|y(s)|+|\z(t,s)|^{1+\d}\big)
+g^i(t,s,y(s),\z(t,s))\big]ds\\
\ss\ds\ \qq\qq\qq\qq=X_m^{t,r,i}+\int_r^{\si_m^{t,r,i}}\z^i(t,s)dW(s),\ \ r\in[t,T].\ea\ee
This yields that for each $\e>0$,
\bel{eq:1.19}\ba{ll}
\ds\exp\Big(\g\e\int_r^{\si_m^{t,r,i}} |\z^i(t,s)|^2 ds\Big)\les\exp\Big(\e X_m^{t,r,i}\Big)\cd \exp\Big({3\over2}\e^2\int_r^{\si_m^{t,r,i}} |\z^i(t,s)|^2ds\Big)\\
\hspace{2cm} \ds\qq\cdot \exp\Big(\e\int_r^{\si_m^{t,r,i}}\z^i(t,s)dW(s)-{3\over2}\e^2
\int_r^{\si_m^{t,r,i}}|\z^i(t,s)|^2ds\Big),\ \  r\in[t,T].\ea\ee
Observe that the process
$$H_m^{t,r,i}(\t):=\exp\Big(3\e\int_r^{\t\land\si_m^{t,r,i}} \z^i(t,s)dW(s)-{9\over2}\e^2\int_r^{\t\land\si_m^{t,r,i}} |\z^i(t,s)|^2ds\Big)$$
is a positive martingale with $H_m^{t,r,i}(r)=1$. Taking conditional expectation on both sides of \eqref{eq:1.19} and applying H\"{o}lder's inequality, we obtain that for each $r\in [t,T]$,
$$\ba{ll}
\ss\ds\dbE_r\Big[\exp\Big(\g\e\int_r^{\si_m^{t,r,i}} \3n\3n|\z^i(t,s)|^2 ds\Big)\Big]\1n\les\1n\Big(\dbE_r\Big[\exp\Big(3\e X_m^{t,r,i}\Big)\Big]\Big)^{1\over3}\Big(\dbE_r \Big[\exp\Big({9\over2}\e^2\int_r^{\si_m^{t,r,i}}\3n\3n|\z^i(t,s)|^2 ds\Big)\Big]\Big)^{1\over 3}.\ea$$
Consequently, for $\e\les{2\g\over9}$, we have
$$\Big\{\dbE_r\Big[\exp\Big(\g\e\int_r^{\si_m^{t,r,i}} |\z^i(t,s)|^2 ds\Big)\Big]\Big\}^{2\over3}\les\Big\{\dbE_r \Big[\exp\Big(3\e X_m^{t,r,i}\Big)\Big]\Big\}^{1\over3},\ \  r\in [t,T].$$
In view of the definitions of $\si_m^{t,r,i}$ and $X_m^{t,r,i}$, sending $m\to\infty$ in the above inequality yields that for each $\e\in(0,{2\g\over9}]$ and almost every $t\in[0,T]$,
\bel{eq:1.20}\ba{ll}
\ds\Big\{\dbE_r\Big[\exp\Big(\g\e\int_r^T|\z^i(t,s)|^2 ds\Big)\Big]\Big\}^2
\les \lim\limits_{m\rightarrow \infty}\dbE_r \Big[\exp\Big(3\e X_m^{t,r,i}\Big)\Big]\\
\ \ \ds=\dbE_r\Big\{\exp\Big[3\e \Big(\eta^i(t,r)-\psi^i(t)+\int_r^T\big[\a(t,s)+\b\big(|y(s)|
+|\z(t,s)|^{1+\d}\big)\big]ds\Big)\Big]\Big\}\\
\ \ \ds\les K\exp\Big\{3\e\|\eta(t,r)\|_\infty+K\int_r^T\|y(s)
\|_\infty ds \Big\}\dbE_r\Big[\exp\Big(3\e\b\int_r^T|\z(t,s)|^{1+\d} ds\Big)\Big],\ \ r\in[t,T].
\ea\ee
Thus, it follows from H\"{o}lder's inequality that for almost every $t\in[0,T]$, with $\e_0:={2\g\over9}$,
\bel{eq:1.21}\ba{ll}
\ds\Big\{\dbE_r \Big[\exp\Big({\g\e_0\over n}\int_r^T|\z(t,s)|^2 ds\Big)\Big]\Big\}^2=\Big\{\dbE_r\Big[\exp\({\g\e_0\over n}\sum_{i=1}^n\int_r^T
|\z^i(t,s)|^2ds\)\Big]\Big\}^2\\
\ \ \ds=\Big\{\dbE_r\Big[\prod_{i=1}^n\exp\({\g\e_0\over n}
\int_r^T
|\z^i(t,s)|^2ds\)\Big]\Big\}^2\\
\ \ \ds\les\Big\{\prod_{i=1}^n\Big\{\dbE_r\Big[\exp
\({\g\e_0\over n}\int_r^T
|\z^i(t,s)|^2 ds\)\Big]^n\Big\}^{1\over n}\Big\}^2\\
\ \ \ds=\Big\{\prod_{i=1}^n\Big\{\dbE_r\Big[\exp
\(\g\e_0\int_r^T
|\z^i(t,s)|^2 ds\)\Big]\Big\}^{1\over n}\Big\}^2\\
\ \ \ds\les\1n K\1n\exp\1n\Big[3\e_0\|\eta(t,r)\|_\infty\2n+\2n
K\int_r^T\3n\|y(s)
\|_\infty ds \Big]\Big\{\dbE_r\Big[\exp\Big(3\e_0\b\1n\int_r^T\3n|\z(t,s)|^{1+\d} ds\Big)\Big]\Big\},\ \ r\in[t,T].\ea\ee
By Young's inequality that
\bel{eq:1.22}3\e_0\b|\z(t,s)|^{1+\d}\les{\g\e_0\over n} |\z(t,s)|^2+K,\ \ s\in[t,T].\ee
Coming back to \eqref{eq:1.21}, by \eqref{eq:1.22} we deduce that for $\ae\,t\in[0,T]$,
\bel{eq:1.21*}\ba{ll}
\ds\Big\{\dbE_r \Big[\exp\Big({\g\e_0\over n}\int_r^T|\z(t,s)|^2 ds\Big)\Big]\Big\}^2\\
\ \ \ds\les K\exp\Big(3\e_0\|\eta(t,r)\|_\infty\2n+\1n K
\int_r^T\3n\|y(s)
\|_\infty ds \Big)\dbE_r\Big[\exp\Big({\g\e_0\over n} \int_r^T|\z(t,s)|^2ds\Big)\Big],\ \  r\in[t,T],\ea\ee
which leads to
\bel{eq:1.23}\dbE_r\Big[\exp\Big({\g\e_0\over n}\int_r^T |\z(t,s)|^2ds\Big)\Big]\les K\exp\(3\e_0\|\eta(t,r)\|_\infty+K\int_r^T\|y(s)\|_\infty ds \),\ \ r\in[t,T].\ee
Therefore, for any $\n>0$, noting (by Young's inequality again)
$$\n|\z(t,s)|^{1+\d}\les{\g\e_0\over n^2}|\z(t,s)|^2+K,$$
by H\"{o}lder's inequality and \eqref{eq:1.23} we have
\bel{|z|^1+d}\ba{ll}
\ds\dbE_r\Big[\exp\(\n\int_r^T|\z(t,s)|^{1+\d}ds\)\Big]
\les K\dbE_r\Big[\exp\Big({\g\e_0\over n^2}\int_r^T |\z(t,s)|^2ds\Big)\Big]\\
\ \ \ds\les K\Big\{\dbE_r\Big[\exp\Big({\g\e_0\over n}\int_r^T |\z(t,s)|^2ds\Big)\Big]\Big\}^{1\over n}\\
\ \ \ds\les K\exp\Big[{3\e_0\over n}\|\eta(t,r)\|_\infty+K\int_r^T\|y(s)\|_\infty ds  \Big],\ \  r\in[t,T].\ea\ee
Hence, it follows from \rf{eq:1.14} and \rf{|z|^1+d} that for $i=1,\cdots,n$ and $\ae\,t\in[0,T]$,
\bel{|eta|+|z|}\ba{ll}
\ds \exp\(\g|\eta^i(t,r)|\)+\dbE_r\Big[\int_r^T |\z^i (t,s)|^2 ds\Big] \\
\ \ \ds\les K\exp\(K\int_r^T\|y(s)\|_\infty ds\)\dbE_r\Big[\exp\(K\int_r^T |\z (t,s)|^{1+\d} ds\)\Big]\\
\ \ \ds\les K\exp\(K\int_r^T\|y(s)\|_\infty ds\)\exp\Big[{3\e_0\over n}\|\eta(t,r)\|_\infty+K\int_r^T\|y(s)\|_\infty ds \Big]\\
\ \ \ds\les K\exp\({3\e_0\over  n}\|\eta(t,r)\|_\infty+K\int_r^T\|y(s)\|_\infty ds\), \ \  r\in [t,T].\ea\ee
Note that  $3\e_0={2\g\over3}<\g$. The above leads to
\bel{eq:3.22-1}
\|\eta(t,r)\|_\infty\les K_0\(1+\int_r^T \|y(s)\|_\infty ds\),\ \  r\in [t,T],\ee
for some absolute constant $K_0>0$, independent of $y(\cd)$, in particular. Also, by enlarging $K_0>0$ if necessary, we have that (see \rf{|eta|+|z|})
\bel{|zeta|}\dbE_r\(\int_r^T |\z^i(t,s)|^2ds\)\les K_0\exp\(2K_0\int_r^T\|y(s)\|_\infty ds\), \ \  r\in [t,T].\ee
Now, if
$$\|y(s)\|_\infty\les K_0e^{K_0(T-s)},\q \ae\;s\in[0,T],$$
then it follows from \rf{eq:3.22-1} that for a.e. $t\in[0,T]$,
\bel{eq:1.16}
\|\eta(t,r)\|_\infty\les K_0+K_0\int_r^T K_0e^{K_0(T-s)}ds=K_0 e^{K_0(T-r)},\ \ r\in[t,T].\ee
Then, \rf{eq:1.5} follows by letting $r=t$ in \rf{eq:1.16}. Furthermore, in view of \rf{|zeta|}, one has that for $\ae\,t\in[0,T]$,
$$\big\|\z(t,\cd)\big\|^2_{\overline{{\rm BMO}}[t,T]}\les nK_0 \exp\(2K_0e^{K_0(T-t)}\).$$
Hence, \rf{eq:1.6} holds since
$$\big\|Z(\cd\,,\cd)\big\|_{\overline{{\rm BMO}}( \D^*[0,T])}= \esssup_{t\in[0,T]}\big\|\z (t,\cd)\big\|_
{\overline{{\rm BMO}}[t,T]}\les\sqrt{nK_0}\exp\Big(K_0e^{K_0T}\Big).$$
The proof of Proposition \ref{pro:3.1} is then complete.
\vspace{0.2cm}
\endpf

We are now ready to state and prove the main result of this section.

\bt{th:3.1} \sl Let {\rm(H3.1)} hold. Then for any $\psi(\cd)\in L_{ \cF_T }^\infty(0,T;\dbR^n)$, BSVIE \rf{BSVIE1} admits a unique adapted solution $(Y(\cd),Z(\cd\,,\cd))\in L^\infty_\dbF(0,T;\dbR^n)\times \overline{{\rm BMO}}(\D^*[0,T];\dbR^n)$.
\et

\it Proof. \rm Let $K_0,\bar K_0>0$ be the constants in the above proposition. Let
$$\ba{ll}
\sB\deq\Big\{(U(\cd),V(\cd\,,\cd))\in L_\dbF^\infty(0,T;\dbR^n)\times\overline{{\rm BMO}}(\D^*[0,T];\dbR^n)\bigm|\\
\ss\ds\qq\qq\qq\|U(t)\|_\infty\les K_0e^{K_0(T-t)} \ \ {\rm a.e.}\ t\in[0,T]\ \ {\rm and}\ \ \|V(\cd\,,\cd)\|_{\overline{{\rm BMO}} (\D^*[0,T]) }\les\bar K_0\Big\}\ea$$
which is a convex closed set in the Banach space $L_\dbF^\infty (0,T;\dbR^n)\times \overline{{\rm BMO}}(\D^*[0,T];\dbR^n)$. By Proposition \ref{pro:3.1} we know that for each $(y(\cd),z(\cd\,,\cd))\in \sB$, the following BSVIE
\bel{eq:1.26}\ba{ll}
\ss\ds Y(t)=\psi(t)+\int_t^T  g(t,s,y(s),Z(t,s))ds-\int_t^T Z(t,s)dW(s),\ \  t\in[0,T]\ea\ee
admits a unique adapted solution $(Y(\cd),Z(\cd\,,\cd))\in\sB$. That is to say, the map
$$\G\big(y(\cd),z(\cd\,,\cd)\big)\deq\big(Y(\cd),Z(\cd\,,\cd)\big),\q \big(y(\cd),z(\cd\,,\cd)\big)\in\sB$$
is well-defined and maps from $\sB$ to itself. In order to prove the desired result, it suffices to prove that for some $k>0$, the map $\G$ is contract in $\sB$ with the following equivalent norm:
\bel{equivalent-norm}
\Big\|\big(U(\cd),V(\cd\,,\cd)\big)\Big\|_{\sB_k}\deq\sqrt{\esssup_{t\in[0,T]}\Big(\Big\|e^{{k t\over 2}}U(t)\Big\|^2_\infty+\Big\|e^{{k\cd\over2}}V(t,\cd) \Big\|^2_{\overline{{\rm BMO}} [t,T] }\Big)}.\ee
Now, for any $(y(\cd),z(\cd\,,\cd)), (\wt y(\cd),\wt z(\cd\,,\cd))\in\sB$, set
$$(Y(\cd),Z(\cd\,,\cd))=\G(y(\cd),z(\cd\,,\cd)),\qq
(\wt Y(\cd),\wt Z(\cd\,,\cd))=\G(\wt y(\cd),\wt z(\cd\,,\cd)).$$
It follows from Proposition \ref{pro:3.1} that
\bel{eq:1.27}Y(t)=\eta(t,t),\ Z(t,s)=\z(t,s),\ \wt Y(t)=\wt\eta(t,t),\ \wt Z(t,s)=\wt\z(t,s),\ \ (t,s)\in\D^*[0,T],
\ee
where for almost every $t\in[0,T]$, $(\eta(t,\cdot), \z(t,\cdot))$ and $(\wt\eta(t,\cdot), \wt\z(t,\cdot))$ are respectively the unique solutions of the following BSDEs in $L_\dbF^\infty (t,T;\dbR^n)\times \overline{{\rm BMO}}([t,T];\dbR^n)$,
$$\left\{\2n\ba{ll}
\ds\eta(t,r)=\psi(t)+\int_r^T  g(t,s,y(s),\z(t,s))ds-\int_r^T \z(t,s)dW(s),\ \ r\in [t,T],\\
\ss\ds \wt \eta (t,r)=\psi(t)+\int_r^T  g(t,s,\wt y(s),\wt\z(t,s))ds-
\int_r^T \wt\z(t,s)dW(s),\ \ r\in [t,T].
\ea\right.$$
By the definition of $\sB$ and \rf{eq:1.27}, for almost every $t\in[0,T]$, we have
\bel{eq:1.28}
\phi(|y(t)|)\les\phi(|y(t)|\vee|\wt y(t)|)\les \phi\big(K_0e^{K_0(T-t)}\big)\les\phi\big(K_0e^{K_0T}\big)\ee
and
\bel{eq:1.29}
\big\|\z(t,\cd)\big\|_{\overline{\rm BMO} [t,T] },\big\|\wt\z(t,\cd)\big\|_{\overline{\rm BMO} [t,T] }\les\bar K_0.\ee
The rest of the proof is divided into three steps.\vspace{0.2cm}

{\bf Step 1}. {\it Girsanov's transform}.\vspace{0.2cm}

For $(t,s)\in\D^*[0,T]$, denote
$$\ba{ll}
\ss\ds\D y(s)=y(s)-\wt y(s),\q
\D\eta(t,s)=\eta(t,s)-\wt\eta(t,s),\q\D\z(t,s)=\z(t,s)-\wt\z(t,s),\\
\ss\ds\D\eta^i(t,s)\1n=\1n\eta^i(t,s)\1n-\1n\wt\eta^i(t,s),\q\D\z^i(t,s)
\1n=\1n\z^i(t,s)\1n-\1n\wt\z^i(t,s),\q 1\les i\les n.\ea$$
Let
$$\h\z_i(t,s)=(\z^1(t,s),\cds,\z^{i-1}(t,s),\wt\z^i(t,s),\z^{i+1}(t,s),\cds,
\z^n(t,s))^\top,\q1\les i\les n.$$
Then
$$\ba{ll}
\ss\ds \z(t,s)-\h\z_i(t,s)=(0,\cds,0,\D\z^i(t,s),0,\cds,0)^\top,\\
\ss\ds\h\z_i(t,s)-\wt\z(t,s)=(\D\z^1(t,s),\cds,\D\z^{i-1}(t,s),0,\D\z^{i+1}(t,s),\cds,\D\z^n(t,s))^\top.\ea$$
%
Observe that
$$\ba{ll}
\ss\ds g^i(t,s,y(s),\z(t,s))-g^i(t,s,\wt y(s),\wt\z(t,s))\\
\ \ \ds=g^i(t,s,y(s),\z(t,s))-g^i(t,s,y(s),\h\z_i(t,s))+g^i(t,s,y(s),\h\z_i(t,s))-g^i(t,s,\wt y(s),\wt\z(t,s))\\
\ \ \ds=  g^i_z(t,s)\D\z^i(t,s)+\cG^i(t.s),\ea$$
where
$$\ba{ll}
%
%
\ss\ds g^i_z(t,s)\={ g^i(t,s,y(s),\z(t,s))-g^i(t,s,y(s),\h\z_i(t,s))
\over\D\z^i(t,s)}{\bf1}_{\{\D\z^i(t,s)\ne0\}},\\
\ss\ds\cG^i(t,s)\=g^i(t,s,y(s),\h\z_i(t,s))-g^i(t,s,\wt y(s),\wt\z(t,s)).\ea$$
By \rf{g-g} and \rf{eq:1.28}, we have
\bel{eq:1.32}\ba{ll}
%
%
\ss\ds|g^i_z(t,s)|\les \phi\big(K_0e^{K_0T}\big)\(1+|\z(t,s)|
+|\h\z_i(t,s)|\) ,\qq j\ne i,\\
\ss\ds|\cG^i(t,s)|\les\phi\big(K_0e^{K_0T}\big)\Big[\(1+|\h\z_i(t,s)|+|\wt\z(t,s)|\)|\D y(s)|\\
\ss\ds\qq\qq\qq\qq\qq+\sum_{j\ne i}\(1+|\h\z_i(t,s)|^\d+|\wt\z(t,s)|^\d\)|\D\z^j(t,s)|\Big].\ea\ee
According to the above notation, we have
\bel{eq:1.30}\ba{ll}
\ss\ds\D\eta^i(t,r)=\2n\int_r^T\3n \(  g^i_z(t,s) \D\z^i(t,s)+\cG^i(t,s)\)ds
-\2n\int_r^T\3n\D\z^i(t,s)dW(s),\ \  r\in[t,T].\ea\ee
Extend $g^i_z(t,s)$ to be zero for $s\in[0,t)$ and let
$$N_i(t,r):=\int_0^rg^i_z(t,s) dW(s),\ \  r\in[0,T].$$
In view of \rf{eq:1.29} and \rf{eq:1.32}, we know that $N_i(t,\cd)$ is a BMO martingale under $\dbP$, and that
\bel{eq:1.33}\|N_i(t,\cd)\|_{{\rm BMO}_{\dbP}[0,T]}\les K\Big(1+2\big\|\z(t,\cd)\big\|_{\overline{{\rm BMO}}[t,T]}+\big\|\wt\z(t,\cd)\big\|_{\overline{{\rm BMO}}[t,T]}\Big)\les K.\ee
Then, by \cite[Theorem 2.3]{{Kazamaki 1994}} we know that the stochastic exponential $\sE[N_i(t,\cdot)](\cdot)$ of $N_i(t,\cdot)$, given by
$$\sE[N_i(t,\cdot)](r)=\exp\Big\{\int_0^r g^i_z(t,s) dW(s)-{1\over2} \int_0^r| g^i_z(t,s) |^2ds\Big\},\ \ r\in [0,T],$$
is a uniformly integrable martingale under $\dbP$.\vspace{0.2cm}

Now, we define the probability measure $\dbP^{t,i}$ on $(\Omega,\cF_T)$ by the following:
$$d \dbP^{t,i}= \sE[N_i(t,\cdot)](T) d\dbP.$$
Note that
$$M_i(t,r)=\int_0^r \D\z^i(t,s)dW(s),\ \  r\in[0,T]$$
is also a BMO martingale under $\dbP$. It follows from \cite[Theorem 3.6]{{Kazamaki 1994}} that the process
\bel{eq:1-33}
\wt W_i(t,r)= W(r)-\int_0^r g^i_z(t,s) ds,\ \  r\in[0,T]
\ee
is a standard Brownian motion under the probability measure $\dbP^{t,i}$, and that the process
$$
\wt M_i(t,r)=\int_0^r \D\z^i(t,s)d\wt W_i(t,s),\ \ r\in[0,T]
$$
the Girsanov's transform of $M_i(t,\cdot)$, is a BMO martingale under the probability measure $\dbP^{t,i}$.  Moreover, by  \eqref{eq:1.30} and \eqref{eq:1-33} we know that for $i=1,\cdots,n$,
\bel{eq:1.34}\D\eta^i(t,r)+\int_r^T\D\z^i(t,s)d\wt W_i(t,s)=\int_r^T\cG^i(t,s)ds,\ \  r\in [t,T].\vspace{0.2cm}\ee

{\bf Step 2}. {\it Basic estimate.}\vspace{0.2cm}

 Fix $i=1,\cds,n$, $0\les t\les r\les T$ and $k>0$. Let $\dbE^{t,i}_r[\xi]$ denote the conditional expectation of the random variable $\xi$ with respect to $\cF_r$ under the probability measure $\dbP^{t,i}$. Taking square and $\dbE^{t,i}_r[\cdot]$ in both sides of \eqref{eq:1.34} yields that
\bel{eq:1.35}|\D\eta^i(t,r)|^2+\dbE^{t,i}_r\(\int_r^T
|\D\z^i(t,s)|^2 ds\)=\dbE^{t,i}_r\(\Big|\int_r^T\cG^i(t,s)ds\Big|^2\).\ee
which implies that
\bel{eq:1.35}e^{kr}|\D\eta^i(t,r)|^2+\dbE^{t,i}_r\Big(\int_r^T e^{kr}|\D\z^i(t,s)|^2 ds\Big)= \dbE^{t,i}_r\Big[e^{kr}\Big(\int_r^T |\cG^i(t,s)| ds\Big)^2\Big].\ee
Note that
$$\int_r^Tke^{ks}\Big(\int_s^T|\D\z^i(t,u)|^2du\Big)ds
=\Big(e^{ks}\int_s^T |\D\z^i(t,u)|^2du\Big)\Big|^T_r+\int_r^T e^{k s}|\D\z^i(t,s)|^2ds.$$
As a result,
\bel{eq:1.36}\int_r^T e^{ks}|\D\z^i(t,s)|^2ds=\int_r^Te^{k r}|\D\z^i(t,s)|^2ds+k\int_r^T\Big(\int_s^T e^{k s}|\D\z^i(t,u)|^2 du\Big)ds.\ee
Combining \rf{eq:1.35} and \rf{eq:1.36}, we deduce that
\bel{eq:1.37}\ba{ll}
\ds e^{k r}|\D\eta^i(t,r)|^2+\dbE^{t,i}_r\Big(\int_r^T e^{k s}|\D\z^i(t,s)|^2 ds\Big)\\
\ \ \ds\les2\dbE^{t,i}_r\Big[e^{kr}\Big(\int_r^T \big|\cG^i(t,s)\big| ds\Big)^2\Big]+k \dbE^{t,i}_r\Big\{\int_r^T\dbE^{t,i}_s\Big[e^{k s}
\Big(\int_s^T \big|\cG^i(t,u)\big| du\Big)^2\Big]ds\Big\}\\
\ \ \ds\les2\dbE^{t,i}_r\Big[e^{k r}\Big(\int_r^T \big|\cG^i(t,s)\big| ds\Big)^2\Big]+\dbE^{t,i}_r\Big[\int_r^T k e^{k s}\Big(\int_s^T \big|\cG^i(t,u)\big| du\Big)^2ds\Big].\ea\ee
}
Furthermore, observe by \rf{eq:1.32} and H\"{o}lder's inequality that
$$\ba{ll}
\ds\Big(\int_r^T|\cG^i(t,s)|ds\Big)^2\les K\Big\{\int_r^T\Big[\(1+|\z(t,s)|+|\wt\z(t,s)|\)|\D y(s)|\\
\ds\qq\qq\qq\qq\qq\qq\qq\qq+\sum_{j\ne i}\(1+|\z(t,s)|^\d+|\wt\z(t,s)|^\d\)|\D\z^j(t,s)|\Big]ds\Big\}^2\\
\ds\les K\Big\{\Big[\int_r^Te^{-{k\over2}s}e^{{k\over2}s} \(1+|\z(t,s)|+|\wt\z(t,s)|\)|\D y(s)|ds\Big]^2\\
\ss\ds\qq\qq\qq+\sum_{j\ne i}\Big[\int_r^Te^{-{k\over2}s}e^{{k\over2}s}\(1+|\z(t,s)|^\d+|\wt \z(t,s)|^\d\)|\D\z^j(t,s)|ds\Big]^2\Big\}\\
\ds\les K\Big[\(\int_r^Te^{-ks}ds\)\(\int_r^T e^{ks}\big(1+|\z(t,s)|+|\wt\z(t,s)|\big)^2
|\D y(s)|^2ds\)\\
\ss\ds\qq\qq+\sum_{j\ne i}\(\int_r^Te^{-ks}\(1+|\z(t,s)|^\d+|\wt\z(t,s)|^\d\)^2ds\)
\(\int_r^Te^{ks}|\D\z^j(t,s)|^2ds\)\Big]\\
\ds\les{Ke^{-kr}\over k}\int_r^T e^{ks}\vartheta(t,s)|\D y(s)|^2ds\\
\ss\ds\qq\qq+K\(\int_r^Te^{-{k\over1-\d}s}ds\)^{1-\d}\(\int_r^T|\vartheta(t,s)ds\)^\d
\(\int_r^Te^{ks}|\D\z(t,s)|^2ds\)\\
\ds\les{Ke^{-kr}\over k}\int_r^T e^{ks}\vartheta(t,s)|\D y(s)|^2ds+{Ke^{-kr}\over k^{1-\d}}\(\int_r^T\vartheta(t,s)ds\)^\d
\(\int_r^Te^{ks}|\D\z(t,s)|^2ds\),\ea$$
where $\vartheta(t,s):=1+|\z(t,s)|^2+|\wt\z(t,s)|^2,\ s\in[t,T]$.  Then,
\bel{eq:1.41}
\ds e^{kr}\Big(\int_r^T\big| \cG^i(t,s)\big|ds\Big)^2\ds\les{K\over k}\int_r^T e^{ks}\vartheta(t,s)|\D y(s)|^2ds+{K\over k^{1-\d}}\(1+\int_r^T \vartheta(t,s) ds\)\int_r^Te^{ks}|\D\z(t,s)|^2ds.\ee
Thus, we also have that for each $s\in [t,T]$,
$$
\ds e^{ks\over2}\Big(\int_s^T \big| \cG^i(t,u)\big| du\Big)^2\ds\les{K\over k}\int_s^Te^{ku\over 2}\vartheta(t,u)|\D y(u)|^2 du+{K\over k^{1-\d}}\(1+\int_s^T\vartheta(t,u) du\)\int_s^Te^{ku\over2}|\D\z(t,u)|^2du.$$
It then follows from the above inequality that
\bel{eq:1.43}
\ba{ll}
\ds \int_r^T ke^{ks}\Big(\int_s^T \big| \cG^i(t,u)\big| du\Big)^2 ds\les K\int_r^T \Big(e^{ks\over 2}\int_s^T e^{ku\over 2}\vartheta(t,u)|\D y(u)|^2 du\Big) ds\\
\ds\qq\qq\qq +{K\over k^{1-\d}}\(1+\int_r^T \vartheta(t,s)ds\)\int_r^T\(ke^{ks\over 2} \int_s^T e^{k u\over 2}|\D\z(t,u)|^2 du\) ds.\\
\ \ \ds={2K\over k}\(e^{ks\over 2}\int_s^T e^{ku\over 2}\vartheta(t,u)|\D y(u)|^2 du\Big|^T_r+\int_r^T e^{ks}\vartheta(t,s)|\D y(s)|^2ds\)\\
\ds\qq+{K\over k^{1-\d}}\(1+\int_r^T \vartheta(t,s)ds\)\(2e^{ks\over2}\int_s^Te^{ku\over2}|\D\z(t,u)|^2 du\Big|^T_r+2\int_r^T e^{ks}|\D\z(t,s)|^2 ds\)\\
\ \ \ds\les {K\over k}\int_r^T e^{ks}\vartheta(t,s)|\D y(s)|^2ds+{K\over k^{1-\d}}\(1+\int_r^T \vartheta(t,s)ds\)\int_r^T e^{ks}|\D\z(t,s)|^2ds.
\ea\ee
Now, we substitute \eqref{eq:1.41} and \eqref{eq:1.43} into \eqref{eq:1.37} to obtain that
\bel{eq:1.45}\ba{ll}
\ds e^{kr}|\D\eta^i(t,r)|^2+\dbE^{t,i}_r\Big[\int_r^T e^{k s}|\D\z^i(t,s)|^2 ds\Big]\\
\ \ \ds\les{K\over k}\dbE^{t,i}_r\Big[\int_r^T e^{ks}\vartheta(t,s)|\D y(s)|^2 ds\Big]+{K\over k^{1-\d}}\dbE^{t,i}_r\Big[\Big(1+\int_r^T \vartheta(t,s)ds\Big)\int_r^T e^{ks}|\D\z(t,s)|^2 ds\Big].\vspace{0.2cm}
\ea\ee

{\bf Step 3}. {\it Contraction mapping}.

\ms

 By the definitions of $\vartheta(\cd\,,\cd)$, $\dbE^{t,i}[\,\cd\,]$ and $\dbP^{t,i}$ together with \rf{Equivalent-norms-BMO}, \rf{eq:1.29}, \rf{eq:1.33} and the energy inequality, it follows that for each $i=1,\cds,n$, each $t\in[0,T]$ and each $\t\in\sT[t,T]$,
\bel{eq:1.46-1}
\Big\|e^{k\cd\over 2}\D\z^i(t,\cd) \Big\|^2_{\overline{{\rm BMO}}[t,T]}\les K \sup_{\t\in\sT[t,T]}\left\|\dbE^{t,i}_\t\Big[\int_\t^T e^{k s}|\D\z^i(t,s)|^2 ds\Big]\right\|_\infty,\ee
and
$$\ba{ll}
\ss\ds\dbE^{t,i}_\t\Big[\int_\t^T\vartheta(t,s)ds\Big]\les K\(1+ \big\|\z(t,\cd)\big\|^2_{\overline{{\rm BMO}}[t,T]}+\big\|\wt\z(t,\cd)\big\|^2_{\overline{{\rm BMO}}[t,T]}\)\les K,\\
\ss\ds\dbE^{t,i}_\t\Big[\(1+\int_\t^T\vartheta(t,s)ds\)^2\Big]
\les K\(1+\big\|\z(t,\cd)\big\|^4_{\overline{{\rm BMO}}[t,T]}+\big\|\wt\z(t,\cd\big\|^4_{\overline{{\rm BMO}}[t,T]}\)\les K,\\
\ss\ds\dbE^{t,i}_\t\Big[\(\int_\t^T e^{ks}|\D\z(t,s)|^2 ds\)^2\Big]\les K\Big\|e^{k\cd\over 2}\D\z(t,\cd) \Big\|^4_{\overline{{\rm BMO}}[t,T]},\ea$$
and then, by virtue of H\"{o}lder's inequality, we have
\bel{eq:1.46-2}
\dbE^{t,i}_\t\Big[\int_\t^T e^{ks}\vartheta(t,s)|\D y(s)|^2 ds\Big]\les K\Big\|e^{k\cd\over 2}\D y(\cd)\Big\|^2_{L^\infty_\dbF(t,T;\dbR^n)},\ee
and
\bel{eq:1.46-3}\dbE^{t,i}_\t\Big[\Big(1+\int_\t^T \vartheta(t,s)ds\Big)\int_\t^T e^{ks}|\D\z(t,s)|^2 ds\Big]\les
K\Big\| e^{k\cd\over 2}\D\z(t,\cd) \Big\|^2_{\overline{{\rm BMO}}[t,T]}.\ee
Note that \eqref{eq:1.45} holds still when $r$ is replaced with any $\t\in\sT[t,T]$. It follows from \eqref{eq:1.46-1}--\eqref{eq:1.46-3} that for almost every $t\in[0,T]$ and $i=1,\cds,n$,
$$\ds e^{k t}|\D\eta^i(t,t)|^2+\Big\|e^{k\cd\over 2}\D\z^i(t,\cd)\Big\|^2_{\overline{{\rm BMO}}[t,T]}\ds\les{K\over k}\Big\| e^{k\cd\over 2}\D y(\cd) \Big\|^2_{L_\dbF^\infty(t,T;\dbR^n)}+{K\over k^{1-\d}}\Big\| e^{k\cd\over 2}\D\z(t,\cd)\Big\|^2_{\overline{{\rm BMO}}[t,T]}.$$
Picking $k$ large enough, we obtain
$$\big\|(Y(\cd)-\wt Y(\cd), Z(\cd\,,\cd)-\wt Z(\cd\,,\cd))\big\|_{\sB_k}\les
{1\over2}\big\|(y(\cd)-\wt y(\cd),z(\cd\,,\cd)-\wt z(\cd\,,\cd))\big\|_{\sB_k}.$$
This means that $\G$ is a contraction mapping on the convex closed $\sB$ under the norm $\|\cd\|_{\sB_k}$, which is just the desired result. The proof is then complete. \endpf

\br{rmk:3.3}
\rm Let us make some comparisons with the relevant literature on quadratic growth BSVIEs.

\ms

(i) In \cite{Hernandez 2021}, multi-dimensional BSVIEs were studied when the generator is allowed to depend on the diagonal value $Z(s,s)$. Certain differentiability on the generator was assumed. In addition, the approach used relies on the interesting ideas developed in \cite{Tevzadze 2008} where the relevant data are assumed to be bounded and sufficiently small. 

\ms

(ii) In \cite{Wang-Sun-Yong 2021}, one-dimensional BSVIEs  \rf{BSVIE1} with quadratic growth were investigated. In contrast, our assumption (\ref{g-g}) in Theorem \ref{pro:3.1} is weaker than that in \cite[Theorem 3.5]{Wang-Sun-Yong 2021} even in the one-dimensional framework.
\er

\br{rmk:4.4}
\rm
One crucial step for the proof of Proposition \ref{pro:3.1} is the estimate (\ref{eq:1.45}). To establish that, we have borrowed some interesting ideas in \cite[Lemma 3.1]{Shi-Wang 2012}. In the above proof, we have imposed the element in $\sB$ with a new equivalent norm in (\ref{equivalent-norm}). This helps us bypass the using stochastic Fredholm integral equations, and simplify the arguments as in \cite{Wang-Sun-Yong 2021} and \cite{Yong 2008}. Such equivalent norm techniques also appeared in e.g. \cite{Wang-Zhang 2007}. However, we have to introduce more delicate arguments due to our quadratic growth framework.

\er

\subsection{A mixed case of linear and quadratic growth}

Note that due to the diagonally strictly quadratic condition in (H3.1), Theorem \ref{th:3.1} could not cover the classical case of $z\mapsto g(t,s,y,z)$ growing linearly. To fill this gap, in this subsection, we are going to consider a class of multi-dimensional coupled BSVIEs for which some components of the generator have linear growth $z$, and others have diagonally strictly quadratic growth.

\ms

To begin with, let us first recall the multi-dimensional BSDEs:
\bel{eq:3.48}\ba{ll}
\ss\ds Y(t)=\xi+\int_t^Tf(s,Y(s),Z(s))ds-\int_t^T Z(s)dW(s),\ \  t\in[0,T].\ea\ee
where the free term $\xi$ is an $\cF_T$-measurable $\dbR^n$-valued random variable, and the generator $f:[0,T]\times\dbR^n\times\dbR^n\times\O\to\dbR^n$ has the property that $s\mapsto f(s,y,z)$ is $\dbF$-progressively measurable for all $(y,z)\in\dbR^n\times \dbR^n$. We emphasize that in the literature, with bounded and/or some unbounded terminal states, only the following three mutually exclusive cases were studied: For the map $z\mapsto f(s,y,z)$,

\ms

$\bullet$ It has a linear growth (see the classical literature \cite{Pardoux-Peng 1990, El Karoui-Peng-Quenez 1997});

\ms

$\bullet$ It is diagonally strictly quadratic, and sub-quadratically for off-diagonal components (see \cite{Fan-Hu-Tang 2020});

\ms

$\bullet$ It is diagonally no more than quadratic growth and with bounded for off-diagonal components (see \cite{Hu-Tang 2016,Fan-Hu-Tang 2020}).

\ms

We now would like to look at the mixed case of BSVIEs by which we mean that some components of the generator $f$ have linear growth in $z$ and the other components have at most quadratic growth (not necessarily strict) in $z$ in a special way. More precisely, after a possible relabeling, we have the following assumptions on the generator $f$ of BSDE (\ref{eq:3.48}).

\ms

{\bf(H3.2)} There exist a non-negative valued process $\a(\cd)\in L^\infty_\dbF(\O;L^1(0,T;\dbR))$, an increasing deterministic function $\phi:\dbR\to\dbR_+$ and a constant $\b>0$ such that for all $(t,y,z)\in[0,T]\times\dbR^n\times\dbR^n$,
\bel{BSDE-f0}|f^i(t,y,z)|\les\a(t)+\phi(|y|)+\b\big(|z|+|z^i|^2\big),\q 1\les i\les n,\ee
and for some $k=0,1,2,\cds,n$,
\bel{BSDE-f1}\left\{\2n\ba{ll}
\ss\ds\sgn(y^i) f^i(t,y,z)\les\a(t)+\b\big(|y|+|z^i|^2\big),\qq 1\les i\les k,\\
\ss\ds\sgn(y^i) f^i(t,y,z)\les\a(t)+\b\(|y|+\sum_{j=k+1}^n|z^j|\),\qq k+1\les i\les n.\ea\right.\ee
Here, $k=0$ means that the first condition in \rf{BSDE-f1} is absent, and $k=n$ means that the second condition in \rf{BSDE-f1} is absent. In addition, there exists a $\d\in[0,1)$ such that for each $t\in[0,T]$ and $y,\bar y,z,\bar z\in\dbR^n$,
\bel{f-f}\ba{ll}
\ss\ds |f^i(t,y,z)-f^i(t,\bar y,\bar z)|\les\phi(|y|\vee|\bar y|)\Big[\big(1+|z|+|\bar z|\big)\big(|y-\bar y|+|z^i-\bar z^i|\big)\\
\ss\ds\qq\qq\qq\qq\qq\qq\qq+\big(1+|z|^\d+|\bar z|^\d\big)\sum_{j\neq i}|z^j-\bar z^j|\Big],\qq1\les i\les n.\ea\ee

 Inspired by Lemma \ref{Lemma2.5}, the following establishes an a priori estimate on the solution of BSDE \eqref{eq:3.48} under hypothesis (H3.2), and it is comparable with \cite[Lemma 4.1]{Fan-Hu-Tang 2020}.

\bp{pro:3.2} \sl Let {\rm(H3.2)} hold and $\xi\in L^\infty_{\cF_T}(\O;\dbR^n)$. Assume that for some $T_0\in [0,T)$, BSDE \eqref{eq:3.48} has an adapted solution $(Y(\cd),Z(\cd))\in L^\infty_\dbF(\O;C([T_0,T];\dbR^n))\times \overline{{\rm BMO}}([T_0,T];\dbR^n)$ on time interval $[T_0,T]$. Then, there exists an absolute  constant $K$ depending only on $(n,\beta,T,\xi,\a(\cd))$ and being independent of $T_0$ such that
$$\|Y(\cd)\|^2_{L^\infty_\dbF(\O;C[T_0,T])}+\|Z(\cd)\|^2_{\overline{\rm BMO}[T_0,T]}\les K.$$
\ep

\ms

\it Proof. \rm First of all, taking into account the first inequality in \rf{BSDE-f1}, together with BSDE \rf{eq:3.48}, by virtue of Lemma \ref{Lemma2.5} (or \cite[Proposition 2.1]{Fan-Hu-Tang 2019}), taking $\d=1$ and
$$\a_0(s)=\a(s)+\b|Y(s)|,$$
we have that for each $i=1,2,\cds,k$ and $t\in [T_0,T]$,
\bel{eq:3.57}\ba{ll}
\ns\ds\exp\Big(|Y^i(t)|\Big)+\dbE_t\(\int_t^T |Z^i(s)|^2 ds\)\les  \ds K\dbE_t\Big[\exp\(K |\xi^i|+K\int_t^T \Big(\a(s)+\b |Y(s)|\Big)ds\)\Big]\\
\hspace{5.5cm}\les \ds K\exp\(K\|\xi^i\|_\infty+K\int_t^T \|Y(\cdot)\|_{L^\infty_\dbF(\O;C[s,T])}ds\),
\ea\ee
and then
\bel{eq:3.58}
|Y^i(t)|\les K+K\int_t^T\|Y(\cd)\|_{L^\infty_\dbF(\O;C[s,T])} ds.\ee
\bel{eq:3.60}\sum_{j=1}^k\|Z^j(\cd)\|^2_{\cl{\rm BMO}[t,T]}\les K\exp\(K\int_t^T\|Y(\cd)\|_{L^\infty_\dbF(\O;C[s,T])}ds\).\ee
Furthermore, in view of the second inequality of in \rf{BSDE-f1}, using It\^o's formula, Young's inequality and H\"{o}lder's inequality, we can deduce that for each $i=k+1,\cds,n$ and $t\in [T_0,T]$,
\bel{eq:3.61}\ba{ll}
\ds|Y^i(t)|^2+\dbE_t\Big[\int_t^T|Z^i(s)|^2ds\Big]
=\dbE_t|\xi^i|^2+\dbE_t\Big[\int_t^T2Y^i(s)f^i(s,Y(s),Z(s))ds
\Big]\\
\ \ \ds\les\|\xi\|_\infty^2+{1\over6}\|Y^i(\cd)\|^2_{
L^\infty_\dbF(\O;
C[t,T])}+6\dbE_t\Big\{\Big[\int_t^T\3n\(\a(s)+\b |Y(s)|\1n+\1n\b\3n \sum_{j=k+1}^n|Z^j(s)|\)ds\Big]^2\Big\}\\
\ \ \ds\les K+{1\over6}\|Y^i(\cd)\|^2_{L^\infty_\dbF(\O;
C[t,T])}+K\int_t^T\sum_{j=1}^n|Y^j(s)|^2ds
+K\dbE_t\Big[\(\int_t^T\sum_{j=k+1}^n|Z^j(s)|ds\)^2\Big].\vspace{0.2cm}
\ea
\ee
With the last inequality \eqref{eq:3.61} in hand, using a similar computation to step 2 in the proof of Theorem \ref{th:3.1} and using H\"{o}lder's inequality we deduce that for each  $i=k+1,\cds,n$ and $t\in [T_0,T]$,
$$\ba{ll}
\ds|\bar Y^i(t)|^2+\dbE_t\Big[\int_t^T|\bar Z^i(s)|^2 ds\Big]\\
\ \ \ds\les 3K e^{\rho T}+\frac{1}{2}\|\bar Y^i(\cd)\|^2_{L^\infty_\dbF(\O;C[t,T])}\1n+\1n K\2n\int_t^T\2n\sum_{j=1}^n|\bar Y^j(s)|^2ds+{1\over2n^2} \dbE_t\Big[\int_t^T\3n\(\sum_{j=k+1}^n|\bar Z^j(s)|\)^2 ds\Big]\\
\ \ \ds\les K+\frac{1}{2}\|\bar Y^i(\cd)\|^2_{L^\infty_\dbF(\O;C[t,T])}+K\int_t^T\sum_{j=1}^n\|\bar Y^j(\cd)\|^2_{L^\infty_\dbF(\O;C[s,T])}ds
+{1\over2n}\sum_{j=k+1}^n\|\bar Z^j(\cdot)\|^2_{\cl{{\rm BMO}}[t,T]},\ea$$
where $\rho >0$ only depends on $n$, $\b$,
$$ \ \bar Y^i(s):=e^{\rho s\over 2}Y^i(s)\ \ {\rm and}\ \ \bar Z^i(s):=e^{\rho s\over 2}Z^i(s),\ \ s\in [t,T].$$
Thus, for each $i=k+1,\cdots, n$, and each $t\in [T_0,T]$, we have
$$
\ba{ll}
\ds{1\over2}\|\bar Y^i(\cd)\|^2_{ L^\infty_\dbF(\O;C[t,T])
}+\|\bar Z^i(\cd)\|^2_{\overline{{\rm BMO}}[t,T]}\\
\ \ \ds\les K+K\int_t^T\sum_{j=1}^n\|\bar Y^j(\cd)\|^2_{L^\infty_\dbF(\O;C[s,T])}ds
\ds +{1\over2n}\sum_{j=k+1}^n\|\bar Z^j(\cd)\|^2_{\overline{{\rm BMO}}[t,T]},
\ea
$$
and then
\bel{eq:3.64}
\ds \sum_{j=k+1}^n\|\bar Y^j(\cdot)\|^2_{L^{\infty}_{\dbF}(\Omega;C[t,T])}+\sum_{j=k+1}^n\|\bar Z^j(\cdot)\|^2_{\overline{{\rm BMO}}([t,T])}\les  K+ K \int_t^T\sum_{j=1}^n\|\bar Y^j(\cdot)\|^2_{
L^{\infty}_{\dbF}(\Omega;C[s,T])} ds.\vspace{0.2cm}
\ee
On the other hand, by \eqref{eq:3.58} and H\"{o}lder's inequality we deduce that for each $ i=1,\cdots, k$ and $t\in [T_0,T]$,
$$
|\bar Y^i(t)|^2 \les K +K\int_t^T \sum_{j=1}^n\|\bar Y^j(\cdot)\|^2_{
L^{\infty}_{\dbF}(\Omega;C[s,T])
} ds,
$$
and then
\bel{eq:3.65}
\sum_{j=1}^k\|\bar Y^j(\cdot)\|^2_{L^{\infty}_{\dbF}(\O;C[t,T])} \les K + K\int_t^T \sum_{j=1}^n\|\bar Y^j(\cdot)\|^2_{L^{\infty}_{\dbF}(\O;C[s,T])} ds.\vspace{0.2cm}
\ee
Combining \eqref{eq:3.64} and \eqref{eq:3.65} yields that
\bel{Added3.65-1}\ba{ll}
\ns\ds\sum_{j=1}^n\|\bar Y^j(\cd)\|^2_{L^\infty_\dbF(\O;C[t,T])} \les K+K\int_t^T\sum_{j=1}^n\|\bar Y^j(\cd)\|^2_{L^\infty_\dbF(\O;C[s,T])}ds, \ \  t\in [T_0,T].%
\ea\ee
It then follows from Gronwall's inequality that
\bel{eq:3.66}\sum_{j=1}^n\|\bar Y^j(\cd)\|^2_{L^\infty_\dbF(\O;C[t,T])}\les K,\ \ t\in [T_0,T].\ee
Finally, combining \eqref{eq:3.60}, \eqref{eq:3.64} and \eqref{eq:3.66} yields that \vspace{0.1cm}
\bel{eq:3.67}\sum_{j=1}^n\|\bar Z^j(\cd)\|^2_{\cl{\rm BMO}[t,T]}\les K, \ \  t\in [T_0,T].\ee
Thus, the desired conclusion follows from \eqref{eq:3.66}--\eqref{eq:3.67} immediately. \vspace{0.4cm}  \endpf

\ms

The following existence and uniqueness result on the bounded solution of multi-dimensional BSDEs strengthens \cite[Theorem 2.4]{Fan-Hu-Tang 2020}, which amounts to saying $k=n$ in (H3.2)

\bt{th:3.2} \sl Let {\rm (H3.2)} hold. Then, for any $\xi\in L^\infty_{{\color{red} \cF_T}}(\O;\dbR^n)$, BSDE \eqref{eq:3.48} admits a unique adapted solution $(Y(\cd),Z(\cd))\in L^\infty_\dbF(\O;C([0,T];\dbR^n))\times\overline{{\rm BMO}}([0,T];\dbR^n)$.
\et

\it Proof. \rm
In view of \cite[Theorem 2.1]{Fan-Hu-Tang 2020}, Proposition \ref{pro:3.2} and the proof of \cite[Theorem 2.4]{Fan-Hu-Tang 2020}, the desired conclusion follows immediately.\vspace{0.5cm}
\endpf

Now, let us come back the study on BSVIEs, and introduce the following assumption on the generator $g$ of BSVIE \rf{BSVIE1}, which is comparable with (H3.2).

\ms

{\bf(H3.3)} Let (H3.1) hold with \rf{growth1} replaced by the following:
For some $k=0,1,2,\cds,n$ and each $(t,s,y,z)\in \D^*[0,T]\times \dbR^n\times\dbR^n$, it holds that
\bel{Add-in-H3.2}\left\{\2n\ba{ll}
\ds|g^i(t,s,y,z)|\les\a(t,s)+\b\big(|y|+|z^i|^2\big),\qq 1\les i\les k,\\
\ss\ds |g^i(t,s,y,z)|\les\a(t,s)+\b\(|y|+\sum_{j=k+1}^n|z^j|\),\qq k+1\les i\les n.\ea\right.\ee

Like (H3.2), when $k=0$, the first condition in \rf{Add-in-H3.2} is absent and when $k=n$, the second condition of \rf{Add-in-H3.2} is absent. The following Proposition \ref{pro:3.3} is comparable with the previous Proposition \ref{pro:3.1}.

\bp{pro:3.3} \sl Let {\rm(H3.3)} hold. Then, for each $\psi(\cd)\in L^\infty(0,T;\dbR^n)$ and $y(\cd)\in L^\infty_{\dbF}(0,T;\dbR^n)$, BSVIE \rf{eq:1.4} admits a unique adapted solution $(Y(\cd),Z(\cd\,,\cd))\in L^\infty_\dbF(0,T;\dbR^n)\times \overline{{\rm BMO}}( \D^*[0,T];\dbR^n)$. Furthermore, there exists a constant $\rho>0$ depending on $(n,\b)$, and a constant $K_0>0$ depending only on $(\psi(\cd),\a(\cd,\cd))$ and $(\beta,n,T)$ such that
with
$$\bar y(t):=e^{\rho s\over 2} y(t),\q\bar Y(t):=e^{\rho s\over 2} Y(t),\q\bar Z(t,s):=e^{\rho s \over 2} Z(t,s),\q (t,s)\in\D^*[0,T],$$
as long as
$$\|\bar y(\cd)\|^2_{L^\infty_\dbF(t,T;\dbR^n)}\les K_0e^{K_0(T-t)},\q \ae\ t\in[0,T],$$
it holds that
$$\|\bar Y(\cd)\|^2_{L^\infty_{\dbF}(t,T;\dbR^n)}\les K_0e^{K_0 (T-t)},\q \ae\ t\in[0,T],$$
and
$$\|\bar Z(\cd\,,\cd)\|^2_{\overline{{\rm BMO}}(\D^*[0,T])}\les
K_0\exp\big(K_0e^{K_0T}\big)+K_0e^{K_0T}=:\bar K_0.$$
\ep

\ms

\it Proof. \rm In view of (H3.3), it follows from Theorem \ref{th:3.2} that for almost every $t\in[0,T]$, the following BSDE
\bel{eq:3.69}\ba{ll}
\ss\ds \eta(t,r)=\psi(t)+\int_r^T  g(t,s,y(s),\z(t,s))ds-\int_r^T \z(t,s)dW(s),\ \  r\in[t,T]\ea\ee
admits a unique solution $(\eta(t,\cd),\z(t,\cd))\in L^\infty_{\dbF}(\O;C([t,T];\dbR^n))\times\overline{{\rm BMO}}([t,T];\dbR^n)$. Moreover, by Proposition \ref{pro:3.2} we know that there exists a constant $K>0$ depending only on $\|y(\cd)\|_{L^\infty_{\dbF}(0,T;\dbR^n)}$ and $(\a(\cd\,,\cd),\psi(\cd),\b,n,T)$ such that
$$\|\eta(t,\cd)\|_{L^\infty_\dbF(\O;C([t,T])}
+\|\z(t,\cd)\|_{\overline{{\rm BMO}}[t,T]}\les K,\q\ae\; t\in[0,T].$$
Let
\bel{eq:3.70}Y(t)=\eta(t,t),\q Z(t,s)=\z(t,s),\q (t,s)\in\D^*[0,T].\ee
Then $(Y(\cd),Z(\cd\,,\cd))\in L^\infty_{\dbF}(0,T;\dbR^n)\times \overline{{\rm BMO}}(\D^*[0,T];\dbR^n)$ is an adapted solution of BSVIE \rf{eq:1.4}. Moreover, in view of Theorem \ref{th:3.2} again, a same argument as in Proposition \ref{pro:3.1} yields the uniqueness of the solution of BSVIE \rf{eq:1.4}.

\ms

Finally, in view of \rf{Add-in-H3.2}, using a similar argument to obtain \eqref{eq:3.60}, (\ref{eq:3.64}) and \eqref{Added3.65-1}, we can deduce that there exists a constant $K_0>0$ depending only on $(\a(\cd\,,\cd),\psi(\cd),\b,n,T)$ such that for almost every $t\in[0,T]$,
\bel{eq:3.76}
\sum_{j=1}^k\|\z^j(t,\cd)\|^2_{\overline{{\rm BMO}}[t,T]}\les K_0 \exp\(K_0 \int_t^T \sum_{j=1}^n\|y^j(\cd)\|_{L^\infty_\dbF(s,T;\dbR)} ds\),\ee
\bel{eq:3.77}\ba{ll}
\ds \sum_{j=k+1}^n\|\bar \z^j(t,\cd)\|^2_{\overline{{\rm BMO}}[t,T]}\les K_0+K_0\int_t^T\sum_{j=1}^n\|\bar y^j(\cdot)\|^2_{L^\infty_\dbF (s,T;\dbR)} ds\vspace{0.2cm}
\ea
\ee
and
\bel{eq:3.78}
\sum_{j=1}^n\|\bar \eta^j(t,\cdot)\|^2_{L^{\infty}_{\dbF}(\O;C[t,T])} \les K_0+ K_0\int_t^T \sum_{j=1}^n\|\bar y^j(\cdot)\|^2_{L^\infty_\dbF(s,T;\dbR)} ds,
\ee
where for each $j=1,\cdots,n$,
$$
\bar y^j(s):=e^{\rho s \over 2}y^j(s),\ \ \bar \eta^j(t,s):=e^{\rho s \over 2}\eta^j(t,s)\ \ {\rm and}\ \ \bar \z^j(t,s):=e^{\rho s \over 2}\z^j(t,s),\ \ s\in (t,T].$$
Thus, in view of \rf{eq:1.16}, the desired assertion follows from \eqref{eq:3.70}--\eqref{eq:3.78} immediately.\endpf

\ms\ms

The following Theorem \ref{th:3.3} is comparable with the previous Theorem  \ref{th:3.1}, which is the main result of this subsection.

\bt{th:3.3} \sl Let {\rm(H3.3)} hold. Then, for each $\psi(\cd)\in L^\infty(0,T;\dbR^n) $, BSVIE \rf{BSVIE1} admits a unique adapted solution $(Y(\cd),Z(\cd\,,\cd))\in L^\infty_{\dbF} (0,T;\dbR^n)\times \overline{{\rm BMO}}(\D^*[0,T];\dbR^n)$.
\et

\it Proof. \rm Let $K_0$, $\bar K_0$ and $\rho$ be the constants in Proposition \ref{pro:3.3}. It is clear that
$$\ba{ll}
\ss\ds\sB:=\Big\{U(\cd),V(\cd\,,\cd))\in L^\infty_\dbF (0,T;\dbR^n)\times\overline{{\rm BMO}}(\D^*[0,T];\dbR^n)\bigm|\\
\ss\ds\qq\qq \|\bar U(\cd)\|^2_{ L^\infty_\dbF(t,T;\dbR^n)}\les K_0e^{K_0(T-t)},\ \ \ae\ t\in[0,T] \ \ {\rm and}\ \ \|\bar V(\cd\,,\cd)\|^2_{\overline{{\rm BMO}}(\D^*[0,T])}\les \bar K_0,\\
\ss\ds\qq\qq\hb{where for each}\ t\in[0,T], \ \ \bar U(s):=e^{ks\over 2}U(s)\ \ {\rm and}\ \ \bar V(t,s):=e^{ks\over 2}V(t,s),\ s\in (t,T]\Big\}\ea$$
is a convex closed set in Banach space $L^\infty_{\dbF} (0,T;\dbR^n)\times \overline{{\rm BMO}}(\D^*[0,T];\dbR^n)$. By Proposition \ref{pro:3.3} we know that for each $(y(\cd),z(\cd\,,\cd))\in \sB$, the following BSVIE
$$
\ds Y(t)=\psi(t)+\int_t^T  g(t,s,y(s),Z(t,s))ds-\int_t^T Z(t,s)dW(s),\ \  t\in[0,T]
$$
admits a unique adapted solution $(Y(\cdot),Z(\cdot,\cdot))\in \sB$. That is to say, the map
$$
\G\big(y(\cdot),z(\cdot,\cdot)\big):=\big(Y(\cdot),Z(\cdot,\cdot)\big),\ \ \forall \big(y(\cdot),z(\cdot,\cdot)\big)\in \sB
$$
is well defined and stable in $\sB$. The rest proof runs as in Theorem \ref{th:3.1}. \vspace{0.2cm} \endpf

\section{Multi-dimensional BSVIEs with Unbounded Free Terms}

In this section, we are going to investigate multi-dimensional BSVIEs with unbounded free terms. We will treat the quadratic and sub-quadratic cases in a unified framework. Therefore, in this section, we let $\d\in(0,1]$, allowing $\d=1$ (which corresponds to the quadratic case).

\ms

We first introduce the following assumption on the generator $g$ of BSVIEs (\ref{BSVIE1}) which is comparable with assumptions (H2.2)--(H2.4) for BSDEs.
%

\ms

{\bf(H4.1)} The map $g:\D^*[0,T]\times\dbR^n\times \dbR^n\to\dbR^n$ is measurable so that $s\mapsto g(t,s,y,z)$ is $\dbF$-progressively measurable on $[t,T]$ for each $(t,y,z)\in[0,T)\times\dbR^n\times\dbR^n$. There exist constants $\b>0$, $\d\in(0,1]$ and a non-negative process $\a:\D^*[0,T]\times\O\to\dbR_+$ with $s\mapsto\a(t,s)$ being $\dbF$-progressively measurable on $[t,T]$, and with
$$\dbE\Big\{\esssup_{t\in[0,T]}\Big[\exp\(p\,\Big|\int_t^T\2n\a(t,s)ds\Big|^{2\d\over 1+\d}\)\Big]\Big\}<\infty,\q \forall p\ges1,$$
such that $g(t,s,y,z):=(g^1(t,s,y,z^1),\cdots, g^n(t,s,y,z^n))^\top$ with
\bel{Add-4.1}\ba{ll}
\ss\ds|g^i(t,s,y, z^i )|\1n\les\1n\a(t,s)
\1n+\1n\b\big(|y|\1n+\1n|z^i|^{1+\d}\big),\qq\forall(t,s,
y,z)\1n\in\1n\D^*[0,T]\1n\times\1n\dbR^n\1n\times\1n\dbR^n,~1\les i\les n,\\
\ss\ds|g(t,s,y,z)-g(t,s,\bar y,z)|\les\b|y-\bar y|,\qq\q\forall(t,s)\in\D^*[0,T],~y,\bar y,z\in\dbR^n,\ea\ee
and
\bel{eq:4.1}\ba{ll}
\ss\ds
g^i(t,s,y,(1-\th)z^i+\th\bar z^i)-\th g^i(t,s,y,\bar z^i)\les (1-\th)
\big[\a(t,s)+\b\big(|y|+|z^i|^{1+\d}\big)\big],\\
\ss\ds\qq\qq\qq\qq
\qq\forall(t,s)\in\D^*[0,T],~y,z,\bar z\in\dbR^n,~\th\in(0,1),~1\les i\les n.\ea\ee

Relevant to the above, let us introduce the following hypothesis.

\ms

{\bf(H4.1)$'$} Let (H4.1) hold with \rf{eq:4.1} replaced by the following:
For each $i=1,2,\cds,n$, either
\bel{eq:4.1*}\ba{ll}
\ss\ds
g^i(t,s,y,(1-\th)z^i+\th\bar z^i)-\th g^i(t,s,y,\bar z^i)\les (1-\th)
\big[\a(t,s)+\b\big(|y|+|z^i|^{1+\d}\big)\big],\\
\ss\ds\qq\qq\qq\qq\qq
\qq\forall(t,s)\in\D^*[0,T],~y,z,\bar z\in\dbR^n,~\th\in(0,1),\ea\ee
or
\bel{eq:4.1**}\ba{ll}
\ss\ds
g^i(t,s,y,(1-\th)z^i+\th\bar z^i)-\th g^i(t,s,y,\bar z^i)\ges-(1-\th)
\big[\a(t,s)+\b\big(|y|+|z^i|^{1+\d}\big)\big],\\
\ss\ds\qq\qq\qq\qq\qq
\qq\forall(t,s)\in\D^*[0,T],~y,z,\bar z\in\dbR^n,~\th\in(0,1).\ea\ee

Although (H4.1)$'$ seems to be a little general than (H4.1), we now show that actually they are equivalent after some minor modifications. In fact, by relabeling, we may assume that for some $k=0,1,2,\cds,n$, one has \rf{eq:4.1*} holds for $1\les i\les k$, and \rf{eq:4.1**} holds for $k+1\les i\les n$. If $k=0$, it means that for all $i=1,2,\cds,n$, \rf{eq:4.1**} holds, and if $k=n$, it means that for all $i=1,2,\cds,n$, \rf{eq:4.1*} holds. Now, we define
$$\ba{ll}
\ss\bar z=(\bar z^1,\cds,\bar z^k,\bar z^{k+1},\cds,\bar z^n)=(z^1,\cds,z^k,-z^{k+1},
\cds,-z^n),\\
\ss\bar y=(\bar y^1,\cds,\bar y^k,\bar y^{k+1},\cds,\bar y^n)=(y^1,\cds,y^k,-y^{k+1},
\cds,-y^n),\\
\ss\ds\bar g^i(t,s, y,{\color{blue} z^i})=g^i(t,s,\bar y,{\color{blue}\bar z^i}),\q 1\les i\les k,\\
\ss\ds\bar g^i(t,s,y,{\color{blue} z^i})=-g^i(t,s,\bar y,
{\color{blue}\bar z^i}),\q k+1\les i\les n.\ea$$
Then (H4.1)$'$ holds for $g(t,s,y,z)$ if and only if (H4.1) holds for $\bar g(t,s,\bar y,\bar z)$.

\ms

Next, we introduce the following hypothesis.

\ms

{\bf(H4.2)} Let (H4.1) hold with \rf{eq:4.1} replaced by the following: for each $i=1,2,\cds,n$, and each $(t,s,y)\in\D^*[0,T]\times\dbR^n$, $z^i\mapsto g^i(t,s,y,z^i)$ be convex, or concave.

\ms

It is clear that when (H4.2) holds, then (H4.1)$'$ must hold for $g$. Hence, (H4.1) (or (H4.1)$'$)  is not very restrictive.

\ms

Further, if the generator $g$ satisfies (\ref{eq:4.1*}) or (\ref{eq:4.1**}), then $g^i(t,s,y,\cd\,)$ is locally Lipschitz continuous for each $(t,s,y)\in \D^*[0,T]\times\dbR^n$ and each $i=1,2,\cdots,n$. We only verify the case of (\ref{eq:4.1*}). The other case can be proved in the same way. Suppose that \eqref{eq:4.1*} holds. For each fixed $z_0\in\dbR$, let
$$\wt g^{\,i}(t,s,y,z):=g^i(t,s,y,z_0+z)-g^i(t,s,y,z_0),\q \forall (t,s,y,z)\in \D^*[0,T]\times\dbR^n\times\dbR,\ \ 1\les i\les n.$$
By \rf{eq:4.1*}, for each $(z_1,z_2)\in\dbR\times\dbR$ and $\th\in (0,1)$, we have $\wt g^i(t,s,y,0)=0$ and
\bel{eq:2.47}\ba{ll}
\ns\ds\wt g^{\,i}(t,s,y,(1-\th)z_1+\th z_2)-\th\wt g^{\,i}(t,s,y,z_2)\\
\ns\ds=g^i(t,s,y,(1-\th)(z_0+z_1)+\th(z_0+z_2))-\th g^i(t,s,y,z_0+ z_2)-(1-\th)g^i(t,s,y,z_0)\\
\ns\ds\les(1-\th)\big(\a(t,s)+\b|y|+\b|z_0+z_1|^{1+\d}\big)
+(1-\th)|g^i(t,s,y,z_0)|\\
\ns\ds\les(1-\th)\big(K(t,s,y,z_0)+2^\d\b|z_1|^{1+\d}\big),\ea\ee
with
$$
K(t,s,y,z_0)=\a(t,s)+\b|y|+2^\d\b|z_0|^{1+\d}+|g^i(t,s,y,z_0)|.
\vspace{0.2cm}
$$
Letting $z_1=1,z_2=0$ and $z_1=1,z_2=1-1/\th$ in \rf{eq:2.47} respectively yields that for each $\th\in (0,1)$,
$$\wt g^{\,i}(t,s,y,1-\th)\les\big(K(t,s,y,z_0)+2^\d\b\big)(1-\th)$$
and
$$\wt g^{\,i}\big(t,s,y,1-{1\over\th}\big)\ges-\big(K(t,s,y,z_0)+2^\d\b)
\Big|1-{1\over\th}\Big|,$$
which means that
$$|g^i(t,s,y,z_0+z)-g^i(t,s,y,z_0)|=|\wt g^{\,i}(t,s,y,z)|\les\big(K(t,s,y,z_0)+2^\d\b\big)|z|,\q\forall z\in (-1,1).$$
This gives the local Lipschitz of $g^i(t,s,y,\cdot)$.

\ms

\ms

 In what follows, we use $K>0$ to represent a generic positive constant depending only on $(\b,\d,T,n)$ which could be different from line to line, and if it also depends on $p$, we will denote it by $K_p$. In addition, for notational convenience, we will frequently use the following function
\bel{eq:new1}\F_\l(x;\m):=\exp(\m x^\l),\q  \forall x,\m\ges 0,\ \ \l>0.\ee
Note that for each $x_1,x_2,\m\ges 0$ and $\l\in (0,1]$,
\bel{eq:new}\F_\l(x_1+x_2;\m)\les\F_\l(x_1;\m)\F_\l(x_2;\m).\ee

\bp{pro:2.1} \sl Let {\rm (H4.1)} hold and $\lambda\={2\d\over 1+\d}$. Then, for any $\psi(\cd)\in \cap_{p\ges 1}\cE^{p,\lambda}_{\cF_T}(\O;L^{\infty}(0,T;\dbR^n)) $ and $y(\cd)\in \cap_{p\ges 1}\cE_\dbF^{p,\lambda}(\O;L^{\infty}(0,T;\dbR^n)) $, the following BSVIE
\bel{eq:2.1}Y(t)=\psi(t)+\int_t^Tg(t,s,y(s),Z(t,s))ds-\int_t^T Z(t,s)dW(s),\ \  t\in\T\ee
admits a unique adapted solution
$$(Y(\cd),Z(\cd\,,\cd))\in  \Big[\bigcap_{p\ges 1}\cE_\dbF^{p,\lambda}(\O;L^{\infty}(0,T;\dbR^n))\Big] \times\Big[ \bigcap_{p\ges1}L^\infty(0,T;L^p_\dbF(\O;L^2(\cd\,,T;\dbR^n)))\Big],$$ %
such that $\eta(t,\cd)\in \cap_{p\ges 1}\cE_\dbF^{p,\lambda}(\O;C([t,T];\dbR^n)) $ for almost every $t\in[0,T]$, where
\bel{eq:4.5} \eta(t,r):=Y(t)-\int_t^r g(t,s,y(s),Z(t,s))ds+\int_t^r Z(t,s)dW(s),\ \ r\in[t,T].\ee

\ep

\it Proof. \rm Fix $y(\cd)\in \cap_{p\ges 1}\cE_\dbF^{p,\lambda}(\O;L^{\infty}(0,T;\dbR^n))$. For each $i=1,\cds,n$, and almost every $t\in[0,T]$, it follows from \rf{Add-4.1} and \rf{eq:4.1} that
\bel{eq:2.5}|g^i(t,s,y(s),z)|\les\a(t,s)+\b |y(s)|+\b|z|^{1+\d},\ \ \forall (s,z)\in[t,T]\times\dbR.\ee
and
\bel{eq:2.5**}\ba{ll}
\ss\ds
g^i(t,s,y(s),(1-\th)z^i+\th\bar z^i)-\th g^i(t,s,y(s),\bar z^i)\les (1-\th)
\big[\a(t,s)+\b|y(s)|+\b|z^i|^{1+\d}\big],\\
\ss\ds\qq\qq\qq\qq
\qq\forall(t,s)\in\D^*[0,T],~z,\bar z\in\dbR^n,~\th\in(0,1),~1\les i\les n.\ea\ee
Furthermore, due to integrability of $\a(\cd\,,\cd)$, $\psi(\cd)$ and $y(\cd)$, by H\"{o}lder's inequality we have
\bel{eq:2.6}\dbE\Big[ \Phi_\l\Big(|\psi^i(t)|+\int_t^T\big(\a(t,s)+\b |y(s)|\big)ds; p\Big) \Big]<+\infty,\ \ \forall p>1.\ee
Then, by virtue of Proposition \ref{Prop2.4} we know that the following one-dimensional parameterized BSDE
\bel{eq:2.7}
\eta^i(t,r)=\psi^i(t)+\int_r^Tg^i(t,s,y(s),\z^i(t,s))ds-\int_r^T \z^i(t,s)dW(s),\ \ r\in [t,T]\ee
admits a unique adapted solution $(\eta^i(t,\cd),\z^i(t,\cd))$, taking value in $\dbR^2$ such that
$$\dbE\[\Phi_\l\(\sup_{s\in [t,T]}|\eta^i(t,s)|;p\)+\(\int_t^T |\z^i(t,s)|^2  d t\)^{p\over 2}\]<+\infty,\ \ \forall p>1.$$
%
%
Let
\bel{eq:2.8}\eta(t,\cd):=(\eta^1(t,\cd),\cds,\eta^n(t,\cd))^\top \ \ {\rm and}\ \ \z(t,\cd):=(\z^1(t,\cd),\cds,\z^n(t,\cd)^\top.\vspace{0.1cm}\ee
Then, by virtue of H\"{o}lder's inequality we know that for almost every $t\in[0,T]$, $(\eta(t,\cdot),\z(t,\cd))$ is the unique solution in the space $ [\bigcap_{p\ges 1}\cE_\dbF^{p,\lambda}(\O;C([t,T];\dbR^n))] \times
[\bigcap_{p\ges1}L^p_\dbF(\Omega;L^2(t,T;\dbR^n))]$ of the following BSDE
\bel{eq:2.9}\eta(t,r)=\psi(t)+\int_r^Tg(t,s,y(s),\z(t,s))ds-\int_r^T \z(t,s)dW(s),\ \ r\in [t,T].\ee
Let
\bel{eq:2.10}\ba{ll}
\ss\ds Y(t)=\eta(t,t), \ \ Z(t,s)=\z(t,s),\ \ \ (t,s)\in \D^*[0,T].\ea\ee
Then, it follows from \eqref{eq:2.9} and \eqref{eq:2.10} that $(Y(\cdot),Z(\cdot,\cdot))$ solves BSVIE \eqref{eq:2.1}, and that the $\eta(t,\cd)$ defined in (\ref{eq:4.5}) equals to that in \eqref{eq:2.9} and then it belongs to $\bigcap_{p\ges 1}\cE_\dbF^{p,\lambda}(\O;C([t,T];\dbR^n))$.  \vspace{0.4cm}

Furthermore, define
\bel{Psi-t-p-def}\ba{ll}
\ss\ds \Psi_\l(\nu):=\esssup_{t\in[0,T]}\Phi_\l\Big(|\psi(t)|+\int_t^T \a(t,s)ds; \nu\Big),\ \ \nu\in [0,+\infty).
\ea\ee
In view of \rf{eq:2.5}--\rf{eq:2.7}, by using Lemma \ref{Lemma2.5} we can deduce that for each $i=1,\cdots,n$, almost every $t\in[0,T]$ and each $p>1$,
$$\ba{ll}
\ds\Phi_\l\Big(|Y^i(t)|;1\Big)=\ds \Phi_\l\Big( |\eta^i(t,t)|;1\Big)\les K\dbE_t\[\Phi_\l\(|\psi^i(t)|+\int_t^T \big(\a(t,s)+\b |y(s)|\big)ds; K\)\]\\
\hspace{2.25cm} \les \ds K\dbE_t\[\Psi_\l(K)\Phi_\l\(\int_t^T |y(s)| ds; K\)\]\ea$$
and
$$\ds\dbE\[\Phi_\l\Big(\sup_{s\in [t,T]}|\eta^i(t,s)|;p\Big)\]+\dbE\[\(\int_t^T |Z^i(t,s)|^2 ds\)^{p\over 2}\]\les\ds
 K\dbE\[\Psi_\l(K_p)\Phi_\l\(\int_t^T |y(s)| ds; K_p\)\],$$
and then, by virtue of Jensen's inequality, Doob's martingale inequality and H\"{o}lder's inequality together with \eqref{eq:2.8}, \eqref{eq:new} and \eqref{eq:2.10}, we have
\bel{eq:2.3*}\ba{ll}
\ds\Phi_\l\Big(|Y(t)|;1\Big)\les\Phi_\l\Big(\sum_{i=1}^n |Y^i(t)|;1\Big)\ds =\prod_{i=1}^n \Phi_\l\Big( |Y^i(t)|;1\Big)\\
\ \ \ds\les \Big\{K\dbE_t\[\Psi_\l(K)\Phi_\l\(\int_t^T |y(s)| ds; K\)\]\Big\}^{n}\les K\dbE_t\[\Psi_\l(K)\Phi_\l\(\int_t^T |y(s)| ds; K\)\],
\ea \ee
\bel{eq:2.3}\ba{ll}
\ds\dbE\[\Phi_\l\Big(\esssup_{t\in[0,T]} |Y(t)|;p\Big)\]=\ds \dbE\[\esssup_{t\in[0,T]}\Phi_\l\Big( |Y(t)|;p\Big)\]\\
\ns\ds\les K_p\dbE\Big\{\esssup_{t\in[0,T]}\(\dbE_t\[\Psi_\l({\color{red}K})\Phi_\l\(\int_0^T |y(s)|ds;{\color{red}K}\)\]\)^p\Big\}\les\ds K_p\dbE\[\Psi_\l(K_p)\Phi_\l\(\int_0^T |y(s)| ds;K_p\)\]\vspace{0.2cm}
\ea\ee
and
\bel{eq:2.11}\ba{ll}
\ds\dbE\[\Phi_\l\Big(\sup_{s\in [t,T]}|\eta(t,s)|;p\Big)\]+\dbE\[\(\int_t^T |Z(t,s)|^2 ds\)^{p\over 2}\]\\
\ns\ds\les \ds \dbE\[\prod_{i=1}^{n}\Phi_\l\Big(\sup_{s\in [t,T]}|\eta^i(t,s)|;p\Big)\]+n^{p-1}\sum_{i=1}^{n}\dbE\[\(\int_t^T |Z^i(t,s)|^2 ds\)^{p\over 2}\]\\
\ns\ds\les\prod_{i=1}^n\Big\{\dbE\[\Phi_\l\(\sup_{s\in [t,T]}|\eta^i(t,s)|;np\Big)\]\Big\}^{1\over n}+K_p\dbE\[\Psi_\l(K_p)\Phi_\l\(\int_t^T|y(s)|ds;K_p\)\]\\
\ns\ds\les \ds K_p\dbE\[\Psi_\l(K_p)\Phi_\l\(\int_t^T|y(s)|ds;K_p\)\].
\ea\ee
Therefore, in view of the assumptions on $\psi(\cd)$, $\a(\cd\,,\cd)$ and $y(\cd)$, it follows from \eqref{eq:2.3}, \eqref{eq:2.11} and H\"{o}lder's inequality that
$$(Y(\cdot),Z(\cdot,\cdot))\in \Big[\bigcap_{p\ges 1}\cE_\dbF^{p,\lambda}(\O;L^\infty(0,T;\dbR^n))\Big]\times\Big[\bigcap_{p\ges1} L^\infty(0,T;L^p_\dbF(\O;L^2(\cd\,,T;\dbR^n)))\Big].$$
Finally, we look at the uniqueness. Suppose
$$(\wt Y(\cdot),\wt Z(\cdot,\cdot))\in \Big[\bigcap_{p\ges 1}\cE_\dbF^{p,\lambda}(\O;L^\infty(0,T;\dbR^n))\Big] \times\Big[\bigcap_{p\ges1} L^\infty(0,T;L^p_\dbF(\O;L^2(\cd\,,T;\dbR^n)))\Big]$$
is another pair of adapted solution of BSVIE \eqref{eq:2.1} such that $\wt\eta(t,\cd)\in \cap_{p\ges 1}\cE_\dbF^{p,\lambda}(\O;C([t,T];\dbR^n)) $ for almost every $t\in[0,T]$, where
\bel{Added-property-eta}\ba{ll}
\ss\ds \wt\eta(t,r):=\wt Y(t)-\int_t^r g(t,s,U(s),\wt Z(t,s))ds+\int_t^r \wt Z(t,s)dW(s),\ \ r\in[t,T].
\ea
\ee
Then for every almost $t\in[0,T]$,
$$(\wt \eta(t,\cd),\wt Z(t,\cd))\in  \Big[\bigcap_{p\ges 1}\cE_\dbF^{p,\lambda}(\O;C([t,T];\dbR^n))\Big] \times\Big[\bigcap_{p\ges1} L^p_{\dbF}(\Omega;L^2(t,T;\dbR^n))\Big]$$
satisfies (\ref{eq:2.9}). By the uniqueness of BSDEs, we see that $\wt\eta(t,\cdot)=\eta(t,\cdot)$, $\wt Z(t,\cdot)=Z(t,\cdot)$ for almost every $t\in[0,T]$, which implies that $Y(\cdot)=\wt Y(\cdot).$
This completes the proof.  \endpf
%
%

\vspace{0.5cm}
The following Theorem \ref{th:2.1} is the main result of this section.
\bt{th:2.1} \sl
Let {\rm (H4.1)} hold and $\lambda={2\d\over 1+\d}$. Then, for any $\psi(\cd)\in \cap_{p\ges 1}\cE^{p,\lambda}(\O;L^{\infty}(0,T;\dbR^n)) $, the following BSVIE
\bel{eq:2.15}
\ba{ll}
\ss\ds Y(t)=\psi(t)+\int_t^Tg(t,s,Y(s),Z(t,s))ds-\int_t^T Z(t,s)dW(s),\ \ t\in[0,T]
\ea
\ee
admits a unique pair of adapted solution
$$(Y(\cdot),Z(\cdot,\cdot))\in   \Big[\bigcap_{p\ges 1}\cE_\dbF^{p,\lambda}(\O;L^\infty(0,T;\dbR^n))\Big] \times\Big[\bigcap_{p\ges 1} L^\infty(0,T;L^p_\dbF(\O;L^2(\cd\,,T;\dbR^n)))\Big]$$
such that $\eta(t,\cd)\in\bigcap_{p\ges 1}\cE_\dbF^{p,\lambda}(\O;C([t,T];\dbR^n)) $
for every almost $t\in[0,T]$, where
\bel{eq:4.21}\ba{ll}
\ss\ds \eta(t,r):=Y(t)-\int_t^r g(t,s,Y(s),Z(t,s))ds+\int_t^r Z(t,s)dW(s),\ \ r\in[t,T].
\ea
\ee
\et

\it Proof. \rm Set $(Y^{(0)}(\cdot),Z^{(0)}(\cdot,\cdot))=(0,0)$ and define recursively the sequence $\{(Y^{(m)}(\cdot),Z^{(m)}(\cdot,\cdot))\}_{m=1}^\infty$ by the adapted solutions of the following BSVIEs given through the way in Proposition \ref{pro:2.1}, with $t\in\T$,
\bel{eq:2.16}
Y^{(m+1)}(t)=\psi(t)+\int_t^T g(t,s,Y^{(m)}(s),Z^{(m+1)}(t,s))ds-\int_t^T Z^{(m+1)}(t,s)dW(s).
\ee
And, for almost every $t\in[0,T]$ and $m\ges 0$, by virtue of Proposition \ref{Prop2.4} we let $(\eta^{(m+1)}(t,\cd),\z^{(m+1)}(t,\cd))$ be the unique solution in the space $ [\bigcap_{p\ges 1}\cE_\dbF^{p,\lambda}(\O;C([t,T];\dbR^n))]\times[\bigcap_{p\ges1} L^p_{\dbF}(\O;L^2(t,T;\dbR^n))]$ of the following BSDE
\bel{eq:2.23}
\ba{ll}
\ss\ds \eta^{(m+1)}(t,r)=\psi(t)+\int_r^Tg(t,s,Y^{(m)}(s),\z^{(m+1)}(t,s))ds-\int_r^T \z^{(m+1)}(t,s)dW(s),\ \ r\in [t,T].\vspace{0.1cm}
\ea
\ee
It then follows from Proposition \ref{pro:2.1} and \eqref{eq:2.16} that
\bel{eq:2.24}
\ba{ll}
\ss\ds Y^{(m+1)}(t)=\eta^{(m+1)}(t,t), \ \ Z^{(m+1)}(t,s)=\z^{(m+1)}(t,s),\ \ \ \ae\ (t,s)\in \D^* [0,T].\vspace{0.1cm}
\ea
\ee
In addition, for almost every $t\in[0,T]$ and each $r\in [t,T]$, we have
$$\ba{ll}
\ss\ds \eta^{(m+1)}(t,r)=Y^{(m+1)}(t)-\int_t^rg(t,s,Y^{(m)}(s),\z^{(m+1)}(t,s))ds+\int_t^r \z^{(m+1)}(t,s)dW(s).
\ea
$$
Then, in view of \rf{eq:2.23} and \rf{eq:2.24}, by using Lemma \ref{Lemma2.5} together with a similar argument to \eqref{eq:2.3*} and \eqref{eq:2.11} we can deduce that for almost every $t\in[0,T]$ and for each $m\ges 0$ and $p>1$,
\bel{eq:2.17}\Phi_\l\Big(|Y^{(m+1)}(t)|,1\Big)\les\ds
K\dbE_t\[\Psi_\l(K)\Phi_\l\(\int_t^T|Y^{(m)}(s)|ds;K\)\],\ee
and
\bel{eq:2.18*}\ba{ll}
\ds\dbE\[\Phi_\l\Big(\sup_{s\in [t,T]}|\eta^{(m+1)}(t,s)|;p\Big)\]
+\dbE\[\(\int_t^T|Z^{(m+1)}(t,s)|^2ds\)^{p\over 2}\]\\
\ds\les K_p\dbE\[\Psi_\l(K_p)\Phi_\l\(\int_0^T|Y^{(m)}(s)|ds;K_p\)\],\vspace{0.1cm}
\ea\ee
where and hereafter, $\Psi_\l(\cd)$ and $\Phi_\l(\cd\,;\cd)$ is defined respectively in \rf{Psi-t-p-def} and \rf{eq:new1}.\vspace{0.4cm}

In the sequel, we will show that $\{(Y^{(m)}(\cdot),Z^{(m)}(\cd,\cd))\}_{m=1}^\infty$ is a Cauchy sequence and converges to a process $(Y(\cdot), Z(\cdot,\cdot))$ in the space $ [\bigcap_{p\ges 1}\cE_\dbF^{p,\lambda}(\O;L^\infty(0,T;\dbR^n))]\times[\bigcap_{p\ges 1} L^\infty(0,T;L^p_\dbF(\O;L^2(\cd\,,T;\dbR^n)))]$, which is a desired solution of BSVIE \eqref{eq:2.15}. First, we prove that for each $p>1$,\vspace{0.1cm}
\bel{eq:2.19}\sup_{m\ges 1}\dbE\[\Phi_\l\(\esssup_{t\in [0,T]}|Y^{(m)}(t)|; p\)\]<+\infty,\ee
and
\bel{eq:2.25}
\sup_{m\ges 1}\esssup_{t\in[0,T]}\Big\{\dbE\[\Phi_\l\Big(\sup_{s\in [t,T]} |\eta^{m}(t,s)|;p\Big)\]+\dbE\[\(\int_t^T |Z^{(m)}(t,s)|^2 ds\)^{p\over 2}\]\Big\}<+\infty.
\ee
In fact, in view of \eqref{eq:2.17}, by Jensen's inequality we deduce that for almost every $t\in[0,T]$ and for each $m\ges 0$ and $k>0$,
\bel{eq:2.21}
\ds \Phi_\l\Big(e^{kt}|Y^{(m+1)}(t)|; 1\Big)\les\bar K\dbE_t\[\Psi_\l(\bar K)
\Phi_\l\(e^{kt}\int_t^T|Y^{(m)}(s)|ds;K\)\]\ee
with $\bar K$ being a constant depending on $k$. Observe that
$$
e^{kt}\int_t^T |Y^{(m)}(s)| ds \les \ds e^{kt}\int_t^T e^{-ks} ds\sup_{s\in[t,T]}\big(e^{ks}  |Y^{(m)}(s)|\big)\les \frac{1} {k} \sup_{s\in [0,T]}\big(e^{ks} |Y^{(m)}(s)|\big).
$$
We have
$$\Phi_\l\(e^{kt}|Y^{(m+1)}(t)|;1\)\les\bar K\dbE_t\[\Psi_\l(\bar K)\Phi_\l\(
\sup_{s\in[0,T]}\big(e^{ks}|Y^{(m)}(s)|\big);\frac{K} {k^\l}\)\],\vspace{0.1cm}
$$
and by Doob's martingale inequality and H\"{o}lder's inequality,
$$\ba{ll}
\ss\ds \dbE\[\esssup_{t\in[0,T]}\Phi_\l\Big(e^{kt}|Y^{(m+1)}(t)|; p\Big)\]\les\bar K \Big({p\over p-1}\Big)^p \dbE\[\Psi_\l(p\bar K)
\Phi_\l\(\sup_{s\in [0,T]}\big(e^{ks} |Y^{(m)}(s)| \big); \frac{p K} {k^\l})
\]\\
\ \ \ds \les\bar K_p\sqrt{\dbE\big[\Psi_\l(\bar K_p)]}\Big\{\dbE\[\Phi_\l\(\esssup_{s\in \T}\big(e^{ks}|Y^{(m)}(s)|\big); \frac{2p K} {k^\l}\)\]\Big\}^{1\over 2},\ \ \forall p>1,\ea$$
where $\bar K_p$ being a variable constant further depending on $k$ and $p$.
Letting $k_0=(2K)^{1\over \l}$ in the last inequality yields that for each $p>1$,
$$\dbE\[\Phi_\l\(\esssup_{t\in[0,T]}\big(e^{k_0 t}|Y^{(m+1)}(t)|\big); p\)\]\les\h K_p
\sqrt{\dbE\big[\Psi_\l(\h K_p)
\big]}\{\dbE\[\Phi_\l\(\esssup_{t\in[0,T]}\big(e^{k_0 t}|Y^{(m)}(t)|\big); p\)\]\Big\}^{1\over 2}
$$
with $\h K_p$ being a constant depending on $k_0$ and $p$.
Then by induction,
\bel{eq:2.22}\ba{ll}
\ds\dbE\[\Phi_\l\(\esssup_{t\in[0,T]}\big(e^{k_0 t}|Y^{(m+1)}(t)|\big); p\)\]\les\h K_p\Big\{\sqrt{\dbE\big[\Psi_\l(\h K_p)
\big]}\Big\}^{1+\frac 1 2+\cdots+\frac{1}{2^m}}\\
\ \ \ \ \ds \cd\Big\{\dbE\[\Phi_\l\(\esssup_{t\in[0,T]}
\big(e^{k_0 t}|Y^{(0)}(t)|\big); p\g\)\]\Big\}^{\frac {1} {2^{m+1}}}\les \h K_p\dbE\[\Psi_\l(\h K_p)\]\ea\ee
Therefore, \eqref{eq:2.19} follows from \eqref{eq:2.22}. Furthermore, in view of \rf{eq:2.18*}, \eqref{eq:2.19} and the assumptions on $\psi(\cd)$ and $\a(\cd\,,\cd)$, it follows from H\"{o}lder's inequality that \eqref{eq:2.25} is also true. \vspace{0.4cm}

Next, for each fixed $m,p\ges 1$, $\theta\in (0,1)$ and $(t,s)\in \D\T$, define\vspace{0.1cm}
$$
\delta_{\theta}\eta^{(m,p)}(t,s):=\frac{\eta^{(m+p)}(t,s)-\theta \eta^{(m)}(t,s)}{1-\theta},\ \ \ \delta_{\theta}Y^{(m,p)}(t):=\frac{Y^{(m+p)}(t)-\theta Y^{(m)}(t)}{1-\theta}
$$
and
$$
\delta_{\theta}Z^{(m,p)}(t,s):=\frac{Z^{(m+p)}(t,s)-\theta Z^{(m)}(t,s)}{1-\theta}.\vspace{0.2cm}
$$
Then for almost every $t\in\T$,  $\delta_{\theta}Y^{(m,p)}(t)=\delta_{\theta}\eta^{(m,p)}(t,t)$, and
$$(\delta_{\theta}\eta^{(m,p)}(t,\cdot),\delta_{\theta}Z^{(m,p)}(t,\cdot))\in \Big[\bigcap_{p\ges 1}\cE_\dbF^{p,\lambda}(\O;C([t,T];\dbR^n))\Big]\times \Big[\bigcap_{p\ges 1} L^p_{\dbF} (\Omega;L^2(t,T;\dbR^n))\Big] $$
solves the following system of BSDEs: for $i=1,\cdots,n$ and $r\in [t,T]$,
$$
\ba{ll}
\ss\ds \delta_{\theta} \eta^{(m,p);i}(t,r)=\psi^i(t)+\int_r^T \delta_{\theta} g^{(m,p);i}\left(t,s,\delta_{\theta} Z^{(m,p);i}(t,s)\right)ds-\int_r^T \delta_{\theta} Z^{(m,p);i}(t,s)dW(s),
\ea
$$
where $\Pi^{\cdots;i}$ represents the $i$-th component $\Pi^{\cdots}$ and for each $(t,s,z)\in \D^*\T\times \dbR$,
\bel{eq:2.26}
\delta_{\theta} g^{(m,p);i}(t,s,z):=\frac{g^i\big(t,s,Y^{(m+p-1)}(s), (1-\theta)z+\theta Z^{(m);i}(t,s)\big)-\theta g^i\big(t,s,Y^{(m-1)}(s),Z^{(m);i}(t,s)\big)}{1-\theta}.
\ee
It follows from \eqref{eq:2.26}, \eqref{Add-4.1} and \rf{eq:4.1} that for each $(t,s,z)\in \D^*\T\times\dbR$,\vspace{0.2cm}
$$
\ba{ll}
\ns\ds \delta_{\theta} g^{(m,p);i}(t,s,z)\les \frac{ \beta|Y^{(m+p-1)}(s)-Y^{(m-1)}(s)|+ (1-\theta)\big(\alpha(t,s)+\beta|Y^{(m-1)}(s)|+\beta|z|^{1+\d}\big)}{1-\theta}\\
\ns\ds \hspace{2.55cm}\les \beta |\delta_{\theta}Y^{(m-1,p)}(s)|+\beta|Y^{(m-1)}(s)|+\alpha(t,s)+\beta|Y^{(m-1)}(s)|+\beta|z|^{1+\d}\\
\ns\ds \hspace{2.55cm}= \alpha(t,s)+2\beta|Y^{(m-1)}(s)|+\beta |\delta_{\theta}Y^{(m-1,p)}(s)|+\beta|z|^{1+\d}.\ea$$
Then, by virtue of Remark \ref{rmk:2.6} we deduce that
for almost every $t\in\T$ and each $i=1,\cdots,n$,
\bel{eq:2.27}\ba{ll}
\ds\Phi_\l\(\[\d_\th\eta^{(m,p);i}(t,r)\]^+;1\)\\
\ns\ds\les K\dbE_r\[\Phi_\l\((\psi^i(t))^+ +\int_r^T \big(\alpha(t,s)+|Y^{(m-1)}(s)|+ |\delta_{\theta}Y^{(m-1,p)}(s)|\big)ds; K\)\],\ \ r\in [t,T],\ea\ee
and then,
\bel{eq:2.28}\ba{ll}
\ds\Phi_\l\(\[\d_\th Y^{(m,p);i}(t)\]^+;1\)\\
\ns\ds\les K\dbE_t\[\Phi_\l\((\psi^i(t))^+ +\int_t^T\big(\alpha(t,s)+|Y^{(m-1)}(s)|+ |\delta_{\theta}Y^{(m-1,p)}(s)|\big)ds;K\)\].\ea\ee
On the other hand, define\vspace{0.1cm}
$$
\delta_{\theta}\tilde\eta^{(m,p)}(t,s):=\frac{\eta^{(m)}(t,s)-\theta \eta^{(m+p)}(t,s)}{1-\theta},\ \ \ \delta_{\theta}\tilde Y^{(m,p)}(t):=\frac{Y^{(m)}(t)-\theta Y^{(m+p)}(t)}{1-\theta}
$$
and \vspace{0.2cm}
$$
\delta_{\theta}\tilde Z^{(m,p)}(t,s):=\frac{Z^{(m)}(t,s)-\theta Z^{(m+p)}(t,s)}{1-\theta}.\vspace{0.2cm}
$$
The same computation as above yields that for almost every $t\in\T$ and each $i=1,\cdots,n$,
\bel{eq:2.29}\ba{ll}
\ds\Phi_\l\(\[\d_\th\wt\eta^{(m,p);i}(t,r)\]^+;1\)\\
\ns\ds\les K\dbE_r\[\Phi_\l\(\big(\psi^i(t)\big)^+ +\int_r^T \big(\a(t,s)+|Y^{(m+p-1)}(s)|+ |\d_\th\wt Y^{(m-1,p)}(s)|\big)ds;K\)\],\ \ r\in [t,T],\ea\ee
and then,
\bel{eq:2.30}\ba{ll}
\ds\Phi_\l\(\big(\d_\th\wt Y^{(m,p);i}(t)\big)^+;1\)\\
\ns\ds\les K\dbE_t\[\Phi_\l\((\psi^i(t))^+ +\int_t^T\big(\a(t,s)+|Y^{(m+p-1)}(s)|+ |\d_\th\wt Y^{(m-1,p)}(s)|\big)ds;K\)\].
\ea
\end{equation}

Furthermore, observe that for almost every $t\in\T$ and for each $i=1,\cdots,n$ and $r\in [t,T]$, we have
\begin{equation}\label{eq:2.31}
\begin{array}{ll}
\ds \left(\delta_{\theta} Y^{(m,p);i}(t)\right)^- = \ds \frac{\left(Y^{(m+p);i}(t)-\theta Y^{(m);i}(t)\right)^-} {1-\theta}= \ds \frac{\left(\theta Y^{(m);i}(t)-Y^{(m+p);i}(t)\right)^+} {1-\theta} \vspace{0.1cm}\\
\ \ \les \ds \frac{\theta\left( Y^{(m);i}(t)-\theta Y^{(m+p);i}(t)\right)^+ +(1-\theta^2)|Y^{(m+p);i}(t)|} {1-\theta}\les \ds \left(\delta_{\theta} \widetilde Y^{(m,p);i}(t)\right)^+ +2|Y^{(m+p)}(t)|,
\end{array}
\end{equation}
and similarly,
\begin{equation}\label{eq:2.32}
\left(\delta_{\theta} \widetilde Y^{(m,p);i}(t)\right)^- \les \left(\delta_{\theta} Y^{(m,p);i}(t)\right)^+ +2|Y^{(m)}(t)|,\vspace{0.1cm}
\end{equation}
\begin{equation}\label{eq:2.33}
\left(\delta_{\theta}\eta^{(m,p);i}(t,r)\right)^- \les \left(\delta_{\theta} \widetilde \eta^{(m,p);i}(t,r)\right)^+ +2|\eta^{(m+p)}(t,r)|,\vspace{0.1cm}
\end{equation}
\begin{equation}\label{eq:2.34}
\left(\delta_{\theta}\tilde\eta^{(m,p);i}(t,r)\right)^- \les \left(\delta_{\theta} \eta^{(m,p);i}(t,r)\right)^+ +2|\eta^{(m)}(t,r)|.\vspace{0.2cm}
\end{equation}
It then follows from \eqref{eq:2.28}, \eqref{eq:2.30}, \eqref{eq:2.31}, \eqref{eq:2.32}, \rf{eq:new} and Jensen's inequality that
$$\ba{ll}
\ds \Phi_\l\(|\d_\th Y^{(m,p);i}(t)|;1\)\les\Phi_\l\(\(\delta_{\theta} Y^{(m,p);i}(t)\)^+; 1\)\cdot \Phi_\l\(\(\delta_{\theta} Y^{(m,p);i}(t)\)^-; 1\) \vspace{0.1cm}\\
\ns\ds K\dbE_t\[\Phi_\l(|\psi(t)|+|Y^{(m+p)}(t)| +\int_t^T \(\alpha(t,s)+|Y^{(m-1)}(s)|+ |Y^{(m+p-1)}(s)|\)  ds \vspace{0.1cm}\\
\ds \hspace{2.4cm} + \int_t^T \( |\delta_{\theta}Y^{(m-1,p)}(s)|+|\delta_{\theta}\widetilde Y^{(m-1,p)}(s)| \)  ds; K\)\]
\end{array}
$$
and
$$\ba{ll}
\ds \Phi_\l\(|\d_\th\wt Y^{(m,p);i}(t)|; 1\)\les \Phi_\l\(\big(\d_\th\widetilde Y^{(m,p);i}(t)\)^+; 1\)\cdot \Phi_\l\(\(\delta_{\theta} \widetilde Y^{(m,p);i}(t)\t)^-; 1\) \vspace{0.1cm}\\
\ \ \les \ds
K\dbE_t\[\Phi_\l\(|\psi(t)|+|Y^{(m)}(t)| +\int_t^T \(\alpha(t,s)+|Y^{(m-1)}(s)|+ |Y^{(m+p-1)}(s)|\)  ds  \vspace{0.1cm}\\
\ds \hspace{2.4cm} + \int_t^T \( |\delta_{\theta}Y^{(m-1,p)}(s)|+|\delta_{\theta}\widetilde Y^{(m-1,p)}(s)| \) ds; K\)\].
\end{array}
$$
Consequently, by \rf{eq:new} and Jensen's inequality again we get that for almost every $t\in \T$,
$$\ba{ll}
\ds\Phi_\l\(|\d_\th Y^{(m,p);i}(t)|+|\d_\th\wt Y^{(m,p);i}(t)|;1\)\vspace{0.2cm}\\
\ \ \les \ds
K\dbE_t\[\Phi_\l\(|\psi(t)|+|Y^{(m)}(t)| + |Y^{(m+p)}(t)| +\int_t^T \big(\a(t,s)+|Y^{(m-1)}(s)|+ |Y^{(m+p-1)}(s)|\big)ds \vspace{0.1cm}\\
\ds \hspace{2.4cm}+\int_t^T\big(|\d_\th Y^{(m-1,p)}(s)|+|\delta_{\theta}\widetilde Y^{(m-1,p)}(s)|\big)ds;K\Big)\],\ \ \ \ i=1,\cds,n,\ea$$
and then
\bel{eq:2.35}\ba{ll}
\ds\Phi_\l\(|\d_\th Y^{(m,p)}(t)|+|\d_\th\wt Y^{(m,p)}(t)|;1\)\\
\ns\ds\les K\dbE_t\[\Phi_\l\(\esssup_{s\in\T}\big( |Y^{(m)}(s)|+|Y^{(m+p)}(s)|\big) +\int_0^T \big(|Y^{(m-1)}(s)|+|Y^{(m+p-1)}(s)|\big)ds\\
\ns\ds \hspace{2.4cm}+\int_t^T\big(|\d_\th Y^{(m-1,p)}(s)|+|\d_\th\wt Y^{(m-1,p)}(s)| \big)ds;K\)\Psi_\l(K)\].\ea\ee
On the other hand, in view of \eqref{eq:2.27}, \eqref{eq:2.29}, \eqref{eq:2.33} and \eqref{eq:2.34}, the same computation as above yields that for almost every $t\in\T$ and each $r\in [t,T]$,
\bel{eq:2.36}\ba{ll}
\ds\Phi_\l\(|\d_\th\eta^{(m,p)}(t,r)|+|\d_\th\wt\eta^{(m,p)}(t,r)|;1\)\\
\ \ \les \ds
K\dbE_r\Big[\Phi_\l\Big(\esssup_{s\in [t,T]}\big(|\eta^{(m)}(t,s)|+ |\eta^{(m+p)}(t,s)|\big)+\int_0^T\big(|Y^{(m-1)}(s)|+|Y^{(m+p-1)}(s)|\big)ds \vspace{0.1cm}\\
\ds \hspace{2.4cm}+\int_0^T\big( |\delta_{\theta}Y^{(m-1,p)}(s)|+|\delta_{\theta}\widetilde Y^{(m-1,p)}(s)|\big) ds; K\)\Psi_\l(K)\].
\end{array}
\end{equation}

Observing the similarity of \eqref{eq:2.35} and \eqref{eq:2.17}, we can induce with respect to $m$ and use a similar argument as that obtaining \eqref{eq:2.22} to derive that there exist a positive constant $k_0>0$ depending only on $(\b,\d,n,T)$ such that for each $q>1$ and $\theta\in (0,1)$,
\bel{eq:2.37}\ba{ll}
\ds\dbE\[\Phi_\l\(\esssup_{t\in \T}\big[e^{k_0t}\big(|\d_\th Y^{(m,p)}(t)|+|\d_\th\wt Y^{(m,p)}(t)|\big)\big]; q\)\]\\
\ns\ds\les\bar K(q)\Big\{\dbE\[\Phi_\l\(\esssup_{t\in\T}\big[e^{k_0 t}\big(|\d_\th Y^{(1,p)}(t)|+|\d_\th\wt Y^{(1,p)}(t)|\big)\big];q\)\]\Big\}^{1\over 2^m},\ea\ee
where, in view of \eqref{eq:2.19}, \rf{eq:new} and assumptions on $\psi(\cd)$ and $\a(\cd,,\cd)$ together with H\"{o}lder's inequality,\vspace{0.1cm}
$$\ba{ll}
\ss\ds M(q):=\h K_q\sup_{m,p\ges 1}\dbE\[
\Psi_\l(\h K_q)\Phi_\l\Big(\esssup_{s\in\T}\big( |Y^{(m)}(s)|+|Y^{(m+p)}(s)|\big)\\
\ds \qq\qq\qq\qq\qq\qq +\int_0^T\big(|Y^{(m-1)}(s)|+|Y^{(m+p-1)}(s)|\big)ds;\h K_q\Big)\Big]<+\infty\ea$$
with $\h K_q$ being a variable constant further depending on $k_0$ and $q$.
In view of \eqref{eq:2.19}, it follows from \eqref{eq:2.37} that for each $q>1$ and $\theta\in (0,1)$,
\bel{eq:2.38}
\limsup_{m\to\infty}\sup_{p\ges 1}\dbE\[\Phi_\l\(\esssup_{t\in \T}\big(|\d_\th Y^{(m,p)}(t)|+|\d_\th\wt Y^{(m,p)}(t)|\big); q\)\]\les M(q).\ee
Thus, for each $\theta\in (0,1)$,
$$\limsup_{m\to\infty}\sup_{p\ges 1}\dbE\[\esssup_{t\in \T}|Y^{(m+p)}(t)-\theta Y^{(m)}(t)|^\l\]\les {(1-\theta)^\l \over 2}M(2),$$
and then, in view of \eqref{eq:2.19} again,
$$\limsup_{m\to\infty}\sup_{p\ges 1}\dbE\[\esssup_{t\in \T}|Y^{(m+p)}(t)- Y^{(m)}(t)|^\l\]\les(1-\th)^\l\Big\{{M(2)\over2}+\sup_{m\ges 1}\dbE\[\esssup_{t\in \T}|Y^{(m)}(t)|\]\Big\}<+\infty.$$
Sending $\th$ to $1$ in the last inequality, we get that
$$\ba{ll}
\ss\ds \limsup_{m\rightarrow\infty}\sup_{p\ges 1}\dbE\left[\esssup_{t\in \T}\left|Y^{(m+p)}(t)- Y^{(m)}(t)\right|^\l\right]=0.
\ea
$$
Using again \eqref{eq:2.19} and H\"{o}lder's inequality, we have for each $q>1$,
$$\ba{ll}
\ds \lim_{m\to\infty}\sup_{p\ges 1}\dbE\[\esssup_{t\in \T}|Y^{(m+p)}(t)-Y^{(m)}(t)|^q\]\\
\ns\ds\les\lim_{m\to\infty} \sup_{p\ges 1}\Big\{\dbE\[\esssup_{t\in \T}|Y^{(m+p)}(t)-Y^{(m)}(t)|^{2q-\l}\]\Big\}^{1\over2}\Big\{\dbE\[\esssup_{t\in \T}|Y^{(m+p)}(t)-Y^{(m)}(t)|^\l\]\Big\}^{1\over2}\\
\ns\ds\les\sup_{m\ges1}\Big\{\dbE\[\esssup_{t\in \T}\Phi_\l\(|Y^{(m)}(t)|; K_q\)\]\Big\}^{1\over2}\lim_{m\to\infty}\sup_{p\ges1}\Big\{\dbE\[\esssup_{t\in \T}|Y^{(m+p)}(t)-Y^{(m)}(t)|^\l\]\Big\}^{1\over2}=0,\ea$$
and then
$$\ba{ll}
\ds\lim_{m\to\infty}\sup_{p\ges 1}\dbE\[\esssup_{t\in \T}\Phi_\l\(|Y^{(m+p)}(t)-Y^{(m)}(t)|;q\)-1\]\\
\ns\ds\les \lim_{m\to\infty}\sup_{p\ges1}\dbE\[\esssup_{t\in \T}\{\Phi_\l\(|Y^{(m+p)}(t)-Y^{(m)}(t)|;q\)q|Y^{(m+p)}(t)
-Y^{(m)}(t)|^\l\}\]\\
\ns\ds\les \sup_{m\ges 1}\dbE\Big\{\[\esssup_{t\in \T}\Phi_\l\(|Y^{(m)}(t)|; 2q\)\]\Big\}^{1\over2}\lim_{m\to\infty}\sup_{p\ges 1}\Big\{\dbE\[q^2\esssup_{t\in \T}|Y^{(m+p)}(t)-Y^{(m)}(t)|^{2\l}\]\Big\}^{1\over2}=0.
\ea\vspace{0.2cm}
$$
Consequently, there is an adapted process $Y(\cdot)\in \bigcap_{p\ges 1}\cE_\dbF^{p,\lambda}(\O;L^\infty(0,T;\dbR^n))$ such that for each $q>1$,
\begin{equation}\label{eq:2.39}
\lim_{m\to\infty} \dbE\[\esssup_{t\in \T}|Y^{(m)}(t)-Y(t)|^q\]=0
\end{equation}
and
\begin{equation}\label{eq:2.40}
\lim_{m\to\infty} \dbE\[\esssup_{t\in \T}\Phi_\l\(|Y^{(m)}(t)-Y(t)|; q\)\]=1.\vspace{0.2cm}
\end{equation}

Finally, coming back to \eqref{eq:2.36}, in view of \eqref{eq:2.19}, \eqref{eq:2.25} and \eqref{eq:2.38}, by virtue of Doob's martingale inequality and H\"{o}lder's inequality we can deduce that for each $q>1$ and $\theta\in (0,1)$,
$$\limsup_{m\to\infty}\sup_{p\ges1}\esssup_{t\in\T}\dbE\[\Phi_\l\(\sup_{r\in [t,T]}\big(|\d_\th\eta^{(m,p)}(t,r)|+|\d_\th\tilde \eta^{(m,p)}(t,r)|\big);q\)\]\les\h M(q),$$
where $\h M(q)$ is a positive constant depending on $q$ and being independent of $\theta$. Thus, a similar proof to \eqref{eq:2.39} and (\ref{eq:2.40}) yields that for almost every $t\in\T$, there is an adapted process $\eta(t,\cd)\in\bigcap_{p\ges 1}\cE_\dbF^{p,\lambda}(\O;C([t,T];\dbR^n)) $ such that for each $q>1$,
\begin{equation}\label{eq:2.41}
\lim_{m\to\infty} \esssup_{t\in\T} \dbE\(\sup_{s\in [t,T]}|\eta^{(m)}(t,s)-\eta(t,s)|^q\)=0,
\end{equation}
and
\begin{equation}\label{eq:2.41--1}
\lim_{m\to\infty} \esssup_{t\in\T}
\dbE\[\Phi_\l\(\sup_{s\in [t,T]}|\eta^{(m)}(t,s)-\eta(t,s)| ; q\)\]
=1.\vspace{0.2cm}
\end{equation}
Furthermore, it follows from It\^{o}'s formula that for almost every $t\in \T$ and for each $m,p\ges 1$,
\begin{equation}\label{eq:2.42}
\dbE\[\int_t^T |Z^{(m+p)}(t,s)-Z^{(m)}(t,s)|^2ds\]\les 2\dbE\[\sup_{s\in [t,T]}|\eta^{(m+p)}(t,s)-\eta^{(m)}(t,s)|\cd\D^m(t)\],
\end{equation}
with
$$\D^m(t):=\int_t^T\sum_{i=1}^n|g^i(t,s,Y^{(m+p-1)}(s), Z^{(m+p);i}(t,s))-g^i(t,s,Y^{(m-1)}(s),Z^{(m);i}(t,s))|ds.$$
And, by virtue of  the first inequality in (\ref{Add-4.1}), the assumption on $\a(\cd\,,\cd)$, \eqref{eq:2.19} and \eqref{eq:2.25} we get that
\begin{equation}\label{eq:2.43}
\sup_{m,p\ges 1}\esssup_{t\in\T}\dbE\[|\D^m(t)|^2\]<+\infty.
\end{equation}
Then, applying H\"{o}lder's inequality to \eqref{eq:2.42} and using \eqref{eq:2.41} and \eqref{eq:2.43} leads to that
$$\lim_{m\to\infty}\sup_{p\ges 1}\esssup_{t\in\T}\dbE\[\int_t^T |Z^{(m+p)}(t,s)-Z^{(m)}(t,s)|^2ds\]=0,$$
from which together with \eqref{eq:2.25} it follows that
$$\lim_{m\to\infty}\sup_{p\ges 1}\esssup_{t\in\T} \dbE\[\(\int_t^T |Z^{(m+p)}(t,s)-Z^{(m)}(t,s)|^2 ds\)^{q\over 2}\]=0,\q\forall q>1.$$
Consequently, there exists a process $Z(\cd\,,\cd)\in\bigcap_{p\ges1}  L^\infty(0,T;L^p_\dbF(\O;L^2(\cd\,,T;\dbR^n)))$ such that
\bel{eq:2.44}
\lim_{m\to\infty}\esssup_{t\in\T} \dbE\[\(\int_t^T |Z^{(/
m)}(t,s)-Z(t,s)|^2 ds\)^{q\over 2}\]=0,\qq\forall q>1.\ee
Thus, in view of \eqref{eq:2.39}, \eqref{eq:2.40} and \eqref{eq:2.44}, by sending $m$ to infinity in \eqref{eq:2.16} we deduce that $(Y(\cdot),Z(\cdot,\cdot))$ is a desired solution of BSVIE \eqref{eq:2.15}.
For the above $\eta(t,\cd)$ in (\ref{eq:2.41--1}), it is easy to see that
$$\ba{ll}
\ss\ds \eta (t,r)=\psi(t)+\int_r^Tg(t,s,Y (s),Z(t,s))ds-\int_r^T Z(t,s)dW(s),\ \ r\in[t,T],\vspace{0.1cm}
\ea
$$
from which we see that \eqref{eq:4.21} holds.\vspace{0.2cm}

Next we look at the uniqueness. Suppose $(\wt Y(\cdot),\wt Z(\cdot,\cdot)) $ is another pair of solution
such that $\wt \eta(t,\cd)\in \cap_{p\ges 1}\cE_\dbF^{p,\lambda}(\O;C([t,T];\dbR^n)) $ for almost every $t\in\T$, where
\bel{Relation-two-solutions}\ba{ll}
\ss\ds \wt\eta(t,r):=\wt Y(t)-\int_t^r g(t,s,\wt Y(s),\wt Z(t,s))ds+\int_t^r \wt Z(t,s)dW(s),\ \ r\in[t,T].
\ea
\ee

For each fixed $\theta\in (0,1)$ and $(t,s)\in \D\T$, define\vspace{0.1cm}
$$\ba{ll}
\ds\delta_{\theta}\eta (t,s):=\frac{\eta (t,s)-\theta \wt\eta (t,s)}{1-\theta},\ \ \ \delta_{\theta}Y (t):=\frac{Y (t)-\theta \wt Y (t)}{1-\theta},\ \ \ \delta_{\theta}\tilde\eta (t,s):=\frac{\wt\eta(t,s)-\theta \eta (t,s)}{1-\theta},\\
\ds \delta_{\theta}\tilde Y (t):=\frac{\wt Y(t)-\theta Y (t)}{1-\theta},\ \ \ \delta_{\theta}Z (t,s):=\frac{Z (t,s)-\theta \wt Z (t,s)}{1-\theta},\ \ \
\delta_{\theta}\wt Z (t,s):=\frac{\wt Z (t,s)-\theta  Z (t,s)}{1-\theta}.
\ea
$$
Following the similar arguments from (\ref{eq:2.26}) to (\ref{eq:2.38}), for each $q>1$ and $\theta\in (0,1)$, we have
\begin{equation}
 \dbE\[\Phi_\l\(\esssup_{t\in \T}\big(|\delta_{\theta} Y (t)|+|\delta_{\theta} \widetilde Y (t)|\big); q\)\]\les\wt M(q),
\end{equation}
where $\wt M(q)$ is a positive constant depending on $q$ and being independent of $\theta$. Thus, for each $\theta\in (0,1)$, it follows that
$$
\dbE\[\esssup_{t\in \T}|Y (t)-\theta \wt Y (t)|^\l\]\les \frac{(1-\theta)^\l}{2}\wt M(2),
$$
and then,
$$\ba{ll}
\ss\ds  \dbE\[\esssup_{t\in \T}|Y (t)- \wt Y (t)|^\l\] \les (1-\theta)^\l \l\{ \frac{\wt M(2)}{2} +\dbE\[\esssup_{t\in \T}|\wt Y (t)|\]\}<+\infty.\vspace{0.2cm}
\ea
$$
Sending $\theta$ to $1$, we see that $Y(\cdot)=\wt Y(\cdot).$ Finally, the conclusions of $Z(\cdot,\cdot)=\wt Z(\cdot,\cdot)$, $\eta(\cdot,\cdot)=\wt \eta(\cdot,\cdot)$ follows from Proposition \ref{pro:2.1}.
The proof is complete.  \endpf \vspace{0.2cm}

\br{} \rm
Proposition \ref{pro:2.1} and Theorem \ref{th:2.1} are obtained under the condition that $g^i(t, s, y, z)$ varies with $(t, s, y)$ and the $i$th component $z^i$ of $z\in \dbR^n$ for each $i=1,\cdots,n$, see assumption {\rm (H4.1)} for details. When $g^i$ also depends on the $z^j$ for some $j\neq i$, the complexity of the problem increases significantly. We hope to address it in future work.
\er
\vspace{0.2cm}

\section{Super Quadratic BSVIEs}

In the previous sections, we discuss the case when the generator $g(t,s,y,z)$ of the BSVIEs is at most of quadratic growth in $z$. In this part, we begin to look at the last case, i.e., the case of super-quadratic growth of the map $z\mapsto g(t,s,y,z)$.

For later convenience, let us first recall some existing conclusions of the BSDEs case.
Let us consider the following BSDE
\bel{Quad-BSDE}\ba{ll}
\ss\ds Y(t)=\xi+\int_t^T f(Z(s))ds-\int_t^T Z(s)dW(s), \ \ t\in[0,T],\ea\ee
where the free term $\xi$ and the generator $f$ satisfy the following assumption:

\ms

{\bf (H5.1)} Random variable $\xi$ is bounded $\cF_T$-measurable and $\dbR$-valued, and $f:\dbR\rightarrow\dbR^+$ is convex with
$$f(0)=0\ \ {\rm and}\ \ \limsup_{|z|\rightarrow +\infty}\frac{f(z)}{|z|^2}=\infty.$$

Since the growth order of $z$ in the generator is larger than two, it becomes harder to discuss the integrability of solution $(Y(\cdot),Z(\cdot))$. Hence, in this setting, a pair of $\dbF$-adapted processes $(Y(\cdot),Z(\cdot))$ taking value in $\dbR\times\dbR$ is called a bounded solution by which we mean that they satisfy BSDE (\ref{Quad-BSDE}) for each $t\in[0,T]$ such that $Y(\cd)$ is bounded and
$$\dbE\left[\int_0^T f(Z(t))dt\right]<+\infty.$$
The following conclusion comes from \cite[Theorem 3.1]{Delbaen-Hu-Bao 2011}.

\bp{pro:5.1} \sl Suppose {\rm (H5.1)} holds. Then there exists a bounded $\xi$ such that BSDE \eqref{Quad-BSDE} has no bounded solution.
\ep

At this moment, one may ask: \it what is the sufficient condition of existence of the bounded solution? \rm According to the above conclusion, one needs to impose proper conditions on $\xi$. To this end, given convex $f$ satisfying  {\rm (H5.1)} , let us define
$$\bar f(x):=\sup_{z\in\dbR}\big(z x-f(z)\big),\ \  \forall x\in\dbR.$$
With proper function $q(\cd)$, let us define
$$\ba{ll}
\ss\ds U_t(\xi):=\essinf\Big\{\dbE^{\dbQ}_t\Big[
\xi+\int_t^T \bar f(q(u))du\Big]\Big| \dbQ\sim\dbP\Big\},\ \ t\in[0,T],
\ea
$$
where $\dbQ$ is another probability measure
$$
\ba{ll}
\ss\ds   \frac{d\dbQ}{d\dbP}
=\exp\left[\int_0^T q(s)dW(s)-\frac 1 2 \int_0^T
|q(s)|^2ds\right].
\ea
$$
A bounded random variable $\xi$ is said to be {\it minimal} if
$$\eta\les\xi,\q\dbP(\eta<\xi)>0\q\Ra\q U_0(\eta)<U_0(\xi).$$
The following result comes from \cite[Theorem 3.2]{Delbaen-Hu-Bao 2011}.

\bp{pro:5.2} \sl Suppose $\xi$ is a minimal random variable and {\rm (H5.1)} holds. Then BSDE (\ref{Quad-BSDE}) admits a bounded solution.
\ep

Besides the existence, the uniqueness is another tough issue as is shown in \cite[Theorem 3.3]{Delbaen-Hu-Bao 2011}.

\bp{pro:5.3} \sl Suppose (H5.1) holds and (\ref{Quad-BSDE}) admits a bounded solution. Then for each $y<Y(0)$, BSDE \eqref{Quad-BSDE} admits a bounded solution $(Y'(\cd),Z'(\cd))$ satisfying $Y'(0)=y$.
\ep

Now let us consider the following BSVIE:
\bel{Super-BSVIE-1}\ba{ll}
\ss\ds Y(t)=\psi(t)+\int_t^T g(t,Z(t,s))ds-\int_t^T Z(t,s)dW(s),\ \ \ t\in \T.
\ea
\ee
We discuss its bounded solution $(Y(\cd),Z(\cd\,,\cd))$ taking value in $\dbR\times\dbR$ by which we mean that $(Y(\cd),Z(\cd,\cd))$ is a solution of \eqref{Super-BSVIE-1} such that $Y(\cd)$ is bounded and for each $t\in \T$,
$$
\dbE\left[\int_t^T g(t,Z(t,s))ds\right]<+\infty.
$$
\ms
Similar as {\rm (H5.1)}, for the coefficients in (\ref{Super-BSVIE-1}) we use the following assumptions.

{\bf (H5.2)} The free term $\psi(\cd)$ is a bounded, $\cF_T$-measurable and $\dbR$-valued process, and the generator $g:\T\times\dbR\rightarrow\dbR^+$ is convex  in the second variable with, for each $t\in \T$,
$$\ba{ll}
\ss\ds g(t,0)=0,\ \ {\rm and}\ \ \limsup_{|z|\rightarrow\infty}\frac{g(t,z)}{|z|^2}=\infty.
\ea$$

Similar as Proposition \ref{pro:5.1}, we have
\begin{theorem} \sl Suppose {\rm (H5.2)} holds. Then there exists a bounded $\cF_T$-measurable process $\psi(\cd)$ such that BSVIE \eqref{Super-BSVIE-1} has no bounded solution.
\end{theorem}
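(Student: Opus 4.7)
The plan is to reduce the non-existence for the super-quadratic BSVIE to the non-existence for the one-dimensional super-quadratic BSDE (Proposition \ref{pro:5.1}) by choosing $\psi(\cd)$ to be constant in $t$, so that the BSVIE evaluated at $t=0$ collapses to exactly the kind of BSDE to which Proposition \ref{pro:5.1} applies.

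Concretely, I would first freeze $t=0$. Under (H5.2), the frozen generator $f(z):=g(0,z)$ is convex, non-negative with $f(0)=0$, and super-quadratic at infinity, hence satisfies (H5.1). By Proposition \ref{pro:5.1} there exists a bounded $\cF_T$-measurable random variable $\xi$ such that the BSDE
$$\tilde Y(r)=\xi+\int_r^T g(0,\tilde Z(s))ds-\int_r^T \tilde Z(s)dW(s),\q r\in[0,T]$$
has no bounded solution. I then take $\psi(t)\equiv\xi$, which is bounded and $\cF_T$-measurable, and argue by contradiction.

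Suppose BSVIE \rf{Super-BSVIE-1} with this $\psi$ admits a bounded solution $(Y(\cd),Z(\cd,\cd))$. Evaluating the BSVIE at $t=0$ yields $Y(0)=\xi+\int_0^T g(0,Z(0,s))ds-\int_0^T Z(0,s)dW(s)$. Setting
$$\tilde Y(r):=Y(0)-\int_0^r g(0,Z(0,s))ds+\int_0^r Z(0,s)dW(s),\q r\in[0,T],$$
one checks directly that $\tilde Y$ is continuous, $\dbF$-adapted, with $\tilde Y(T)=\xi$, and that $(\tilde Y(\cd),Z(0,\cd))$ solves the BSDE above. The integrability requirement $\dbE\int_0^T g(0,Z(0,s))ds<\infty$ is part of the definition of a bounded BSVIE solution applied at $t=0$, so this half of the BSDE bounded-solution definition is automatic.

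The main obstacle will be verifying that $\tilde Y(\cd)$ is essentially bounded, which is what is needed to contradict Proposition \ref{pro:5.1}. The lower bound $\tilde Y(r)\ges-\|\xi\|_\infty$ follows from the conditional-expectation representation $\tilde Y(r)=\dbE_r[\xi+\int_r^T g(0,Z(0,s))ds]$ combined with $g\ges 0$ and the boundedness of $\xi$ (after a standard localization argument to justify the passage to conditional expectation). The upper bound is the delicate step: the naive estimate $\tilde Y(r)\les\|\xi\|_\infty+\dbE_r\int_r^T g(0,Z(0,s))ds$ yields only $L^1$-control rather than essential boundedness. I would address this by invoking the stronger form of the Delbaen--Hu--Bao duality identity $U_0(\xi)=+\infty$ underlying Proposition \ref{pro:5.1}, which in fact excludes any adapted solution $\tilde Y$ merely bounded from below with $\dbE\int_0^T f(\tilde Z)<\infty$ (since existence of such a solution would force $U_0(\xi)\les Y(0)<\infty$, contradicting $U_0(\xi)=+\infty$). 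With this refinement in place, the pair $(\tilde Y(\cd),Z(0,\cd))$ produced from a putative bounded BSVIE solution contradicts the choice of $\xi$, completing the argument.
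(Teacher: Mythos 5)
Your proposal is correct and follows essentially the same route as the paper's proof: take $\psi(\cd)\equiv\xi$ constant in $t$ with $\xi$ supplied by Proposition \ref{pro:5.1} for the frozen generator $g(0,\cd)$, and observe that a bounded solution of the BSVIE would, via the process $r\mapsto Y(0)-\int_0^r g(0,Z(0,s))ds+\int_0^r Z(0,s)dW(s)$, produce a bounded solution of the corresponding BSDE at $t=0$, a contradiction. Your additional care about the essential boundedness of this auxiliary process on all of $[0,T]$ (which the paper's definition of a bounded BSVIE solution does not directly supply, since it only controls $Y(t)=\eta(t,t)$) is a legitimate refinement of a point the paper passes over silently, and your fix via the duality bound underlying the Delbaen--Hu--Bao non-existence result is the right way to close it.
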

\it Proof. \rm
Let us take $\psi(\cd)\equiv \bar\xi$, where the bounded random variable $\bar\xi$ is determined by $g(0,\cd)$ such that the following BSDE
$$\ba{ll}
\ss\ds  \bar Y(t)=\bar\xi+\int_t^T g(0,\bar Z(s))ds-\int_t^T \bar Z(s)dW(s), \ \ t\in[0,T],
\ea
$$
has no bounded solution according to Proposition \ref{pro:5.1}. We prove the conclusion by contradiction. Suppose \eqref{Super-BSVIE-1} admits a bounded solution $(Y(\cd),Z(\cd,\cd))$. For any $t\in[0,T]$, define
$$\ba{ll}
\ss\ds \eta(t,r):=Y(t)-\int_t^r g(t,Z(t,s))ds+\int_t^r Z(t,s)dW(s), \ \ r\in[t,T].
\ea
$$
It is easy to see that for each $t\in\T$, $(\eta(t,\cd),Z(t,\cd))$ satisfies the following BSDE:
$$\ba{ll}
\ss\ds \eta(t,r) =\bar\xi+\int_r^T g(t,Z(t,s))ds-\int_r^T Z(t,s)dW(s), \ \ r\in[t,T].
\ea
$$
Let $t=0$, we see that $(\eta(0,\cd),Z(0,\cd))$ solves
$$\ba{ll}
\ss\ds \eta(0,r) =\bar\xi+\int_r^T g(0,Z(0,s))ds-\int_r^T Z(0,s)dW(s), \ \ r\in[0,T].
\ea
$$
Obviously, it is contradicted with the choice of $\bar\xi$.\vspace{0.2cm}
\endpf

Similar as the above, we call a bounded measurable process $\psi(\cd):\T\times\Omega\rightarrow \dbR$ minimal if for any $t\in[0,T]$, $\psi(t)$ is a minimal random variable.

\bt{} \sl Suppose $\psi(\cd)$ is minimal. Then BSVIE \eqref{Super-BSVIE-1} admits a bounded solution under {\rm (H5.2)}. In addition, the bounded solution of \eqref{Super-BSVIE-1}  is not unique.
\et
\it Proof. \rm
Given parameter $t\in[0,T]$, we consider the following BSDE:
$$\ba{ll}
\ss\ds \eta(t,r)=\psi(t)+\int_r^T g(t,\z(t,s))ds-\int_r^T \z(t,s)dW(s),\ \ r\in[0,T].
\ea
$$
Since $\psi(t)$ is minimal and $g$ satisfies {\rm (H5.2)}, according to Proposition \ref{pro:5.2}, the above BSDE admits a bounded solution $(\eta(t,\cd),\z(t,\cd))$ for almost every $t\in\T$. For each $(t,s)\in \D\T$, by defining $Y(t):=\eta(t,t)$ and $Z(t,s):=\z(t,s)$, we obtain a bounded solution $(Y(\cd),Z(\cd,\cd))$ to BSVIE \eqref{Super-BSVIE-1}. Finally, the non-uniqueness is implied by Proposition \ref{pro:5.3}. \endpf

%

%
%
%
%
%
%
%
%
%

\end{document}